\newtheorem{set2}{Satz}[section]
\newtheorem{theorem}[set2]{Theorem}
\newtheorem{lemma}[set2]{Lemma}
\newtheorem{notation[set2]}{Notation}
\newtheorem{proposition}[set2]{Proposition}
\newtheorem{remark}[set2]{Remark}
\newcommand{\ep}{\hfill{$\square$}}
\newenvironment{proof}[1][Proof]{\textbf{#1.} }{\
\\}
\def\XXint#1#2#3{{\setbox0=\hbox{$#1{#2#3}{\int}$}
\vcenter{\hbox{$#2#3$}}\kern-.5\wd0}}
\newcommand{\bl}{\left(}
\newcommand{\br}{\right)}
\newcommand{\e}{\varepsilon}
\newcommand{\di}{\,\mathrm{div}}
\newcommand{\DIV}{\,\mathrm{div}}
\newcommand{\R}{\mathbb{R}}
\newcommand{\N}{\mathbb{N}}
\newcommand{\C}{\mathcal}
\newcommand{\ol}{\overline}
\newcommand{\ul}{\underline}
\newcommand{\dx}{\,\mathrm dx}
\newcommand{\dt}{\,\mathrm dt}
\newcommand{\ds}{\,\mathrm ds}
\newcommand{\dxt}{\,\mathrm dx\,\mathrm dt}
\newcommand{\dxs}{\,\mathrm dx\,\mathrm ds}
\newcommand{\weaklim}{\rightharpoonup}
\newcommand{\weakstarlim}{\stackrel{\star}{\rightharpoonup}}
\newcommand{\bbC}{\mathbb{C}}
\newcommand{\bbD}{\mathbb{D}}
\newcommand{\CC}{\mathbb{C}}
\newcommand{\DD}{\mathbb{D}}
\newcommand{\uk}{u^k}
\newcommand{\pk}{p^k}
\newcommand{\pkk}{p^{k-1}}
\newcommand{\pkkk}{p^{k-2}}
\newcommand{\qk}{q^k}
\newcommand{\qkk}{q^{k-1}}
\newcommand{\chik}{\chi^k}
\newcommand{\Dt}{D_{k}}
\newcommand{\Rn}{\mathbb R^n}
\newcommand{\dU}{\dot{\C U}}
\newcommand{\dX}{\dot{\C X}}
\newcommand{\dQ}{\dot{\C Q}}
\newcommand{\db}{h}
\newcommand{\du}{\dot{u}}
\newcommand{\dchi}{\dot{\chi}}
\newcommand{\aQ}{\overline{\C Q}}
\newcommand{\aU}{\overline{\C U}}
\newcommand{\aX}{\overline{\C X}}
\newcommand{\dbb}{\mathbf{h}}
\newcommand{\dbu}{\mathbf{\dot{u}}}
\newcommand{\dbchi}{\text{\boldmath$\dot\chi$}}
\newcommand{\lu}{u^\lambda}
\newcommand{\lchi}{\chi^\lambda}
\newcommand{\ly}{y^\lambda}
\newcommand{\lz}{z^\lambda}
\newcommand{\ph}{\varphi}
\newcommand{\bxi}{\xi}
\newcommand{\sA}{\mathsf a}
\newcommand{\sB}{\mathsf b}
\newcommand{\olt}{{\ol{t}}}
\date{\today}
\author{M.~Hassan Farshbaf-Shaker$^*$,
Christian Heinemann\footnote{Weierstrass Institute for Applied Analysis and Stochastics (WIAS), Mohrenstr. 39, 10117 Berlin (Germany)}
}
\begin{document}
\title{Necessary conditions of first-order for an optimal boundary control problem for viscous damage processes in 2D}

\maketitle
\begin{abstract}
	Controlling the growth of material damage is an important
	engineering task with plenty of real world applications.
	In this paper we approach this topic
	from the mathematical point of view	by 
	investigating	an optimal boundary control problem for a damage phase-field model for
	viscoelastic media.
	We consider non-homogeneous Neumann data for the displacement field
	which describe external boundary forces and act as control variable.
	The underlying hyberbolic-parabolic PDE system for the state variables
	exhibit highly nonlinear terms which emerge in context with
	damage processes.
	The cost functional is of tracking type, and constraints for the control variable are prescribed.
	Based on recent results from \cite{FH15}, where global-in-time well-posedness of strong
	solutions to the lower 
	level problem	and existence of optimal controls of the upper level problem have been established,
	we show in this contribution
	differentiability of the control-to-state mapping,
	well-posedness of the linearization and existence of solutions of the adjoint state system.
	Due to the highly nonlinear nature of the state system
	which has by our knowledge not been considered
	for optimal control problems in the literature,
	we present a very weak formulation and estimation techniques of the associated adjoint system.
	For mathematical reasons the analysis is restricted here to the two-dimensional case.
	We conclude our results with first-order necessary optimality conditions in 
	terms of a variational inequality together with PDEs for the state and adjoint state system.
\end{abstract}
{\it AMS Subject classifications:}
%35A01,	%Existence problems: global existence, local existence, non-existence
%35A02,	%Uniqueness problems: global uniqueness, local uniqueness, non-uniqueness
35D35, %Partial differential equations, Strong solutions
%35K51, %Initial-boundary value problems for second-order parabolic systems
%35K58, %Semilinear parabolic equations
%35L53, %Initial-boundary value problems for second-order hyperbolic systems
%35L71, %Semilinear second-order hyperbolic equations
35M33, %Initial-boundary value problems for systems of mixed type
%35M87, %Systems of variational inequalities of mixed type 
35Q74, %PDEs in connection with mechanics of deformable solids
49J20, %Optimal control problems involving partial differential equations
49K20, %Optimality conditions, Problems involving partial differential equations
74A45, %Theories of fracture and damage
74D10, %Materials of strain-rate type and history type, other materials with
			%memory (including elastic materials with viscous damping, various
			%viscoelastic materials) - Nonlinear constitutive equations
74F99, %Coupling of solid mechanics with other effects
%74H20, %MECHANICS OF DEFORMABLE SOLIDS - Dynamical problems - Existence of solutions
%74H25, %MECHANICS OF DEFORMABLE SOLIDS - Dynamical problems - Uniqueness of solution
74P99; %MECHANICS OF DEFORMABLE SOLIDS - Optimization
\\[2mm]
{\it Keywords:} {optimality condition, optimal control, damage processes, phase-field model,
viscoelasticity.  \\[2mm]
}

%\selectlanguage{english}
\section{Introduction}
	Damage processes are usually highly nonlinear phenomena and their mathematical investigation
	is topic of many recent contributions in applied analysis.
	One modeling approach uses the phase-field method where a ``smooth'' variable is introduced.
%	describes the transition between the damaged and the undamaged material states.
	In the simplest case this variable is a scalar function and describes the local accumulation
	of damage in the body and the transition between the damaged and the undamaged material states.
	The popularity of phase-field models have increased in the last two decades in various 
	fields of applied mathematics, physics and engineering sciences, see \cite{Pl12}. 
	They are used to predict the micro-structure and morphological evolution of two or more 
	different phases and their mixture. 
	Specifically in damage mechanics they promise an accurate modeling and are powerful 
	techniques in prediction of material behavior. 
	For instance physical laws such as Griffith-type criteria for crack propagation
	could be encoded into a PDE/inclusion system
	and the crack paths need not to be known a priori.
	
	Despite	theses advantages the mathematical treatment of the resulting systems is challenging 
%	However further mathematical challenges emerge
	for different reasons:
	First of all the evolution law for the damage process contains a difficult type
	of nonlinear (coupling) term, a nonlinear operator acting on the time-derivative of the
	evolution variable and, depending on the type of model, even constraints on the state
	and/or its time-derivative.
	Secondly, damage processes are usually coupled with a model for elasticity where the
	material stiffness depends on the damage variable.
	In recent works this type of system has also been coupled with further
	processes such as heat conduction \cite{RR14}, phase separation
	\cite{WIAS1520}, chemical reactions \cite{KR16}, and plasticity \cite{BRRT16}.
	
	In engineering problems one is interested in prediction and even more relevant in 
	control or manipulation of damage evolution in order to prevent, for instance, complete failure
	of a structural component, see \cite{FH15} for some examples and references.

	Our main goal in this work is to provide
	a mathematical basis for optimal control problems of a time-continuous damage model
	including a first-order optimality system which has not been accomplished so far
	to the best knowledge of the authors.
	To state our problem let us
	fix an open, bounded and smooth domain $\Omega\subset\R^n$ with $n\in\{1,2\}$
	where the material is located in the reference configuration
	and let $T>0$ be a final time.
	Furthermore let $\Gamma$ be the boundary to $\Omega$ and $\nu$ the outward unit normal.
	We put $Q :=\Omega\times(0,T)$ and $\Sigma:=\Gamma\times(0,T)$.
	
	%Let $\Omega\subset\mathbb R^n,\,1\leq n\leq 2$, denote some open and bounded $C^2$-domain
	%with boundary $\Gamma$ and outward unit normal $\nu$, and let $T > 0$ be a given final-time.
	%We put $Q :=\Omega\times(0,T)$ and $\Sigma:=\Gamma\times(0,T)$, and we assume that $\lambda_T$
	%and $\lambda_\Sigma$ are given non-negative constants and
	%$\chi_T\in L^2(\Omega)$ is a given target function.
	%Moreover, we assume
	%			\begin{align*}
	%				&\chi_Q\in L^2(Q),\;\chi_T\in L^2(\Omega),\\
	%				&b_{min},\,b_{max}\in L^\infty(\Sigma)\text{ with }b_{min}\leq b_{max}\text{ a.e. in }\Sigma.
	%			\end{align*}
	We investigate the following optimal control problem:
	\begin{align}
		\textbf{(CP) }{}&\textit{Minimize the cost-functional}\notag\\
		&\quad\C J(\chi,b):=\frac{\lambda_T}{2}\|\chi(T)-\chi_T\|^2_{L^2(\Omega)}
		+\frac{\lambda_\Sigma}{2}\|b\|_{L^2(\Sigma;\Rn)}^2
		\label{functional}\\
		&\textit{subject to the hyperbolic-parabolic initial-boundary value problem }\hspace*{-3em}\phantom{\Bigg)}\notag\\
		&\quad u_{tt}-\di\big(\mathbb C(\chi)\e(u)+\mathbb D\e(u_t)\big)=\ell
			&&\text{a.e. in }Q,
			\label{eqn:elasticRegEq}\\
		&\quad\chi_t+\xi(\chi_t)-\Delta\chi_t-\Delta\chi+\frac 12 \mathbb C'(\chi)\e(u):\e(u)+f'(\chi)=0
			&&\text{a.e. in }Q,
			\label{eqn:damageRegEq}\\
		&\quad\big(\mathbb C(\chi)\e(u)+\mathbb D\e(u_t)\big)\cdot \nu  = b
			&&\text{a.e. on }\Sigma,
			\label{eqn:boundaryRegEq}\\
		&\quad\nabla(\chi+\chi_t)\cdot \nu =0
			&&\text{a.e. on }\Sigma,
			\label{eqn:boundaryRegEq2}\\
	  &\quad u(0)=u^0,\,u_t(0)=v^0,\,\chi(0)=\chi^0
		  &&\text{a.e. on }\Omega
	  	\label{eqn:initialRegEq}\\
	  &\textit{and subject to the control constraint (other types will also be allowed)}\phantom{\Bigg)}\notag\hspace*{-3em}\\
			&\quad\C B_{adm}:=\big\{b\in\C B\;|\;b_{min}\leq b\leq b_{max}\text{ a.e. in $\Sigma$ and }\|b\|_{\C B}\leq R\big\}.
			\label{Badm}
	\end{align}
	In the above problem the Banach space
	\begin{align}
		\C B:=L^2(0,T;H^{1/2}(\Gamma;\Rn))\cap H^1(0,T;L^{2}(\Gamma;\Rn))
		\label{Bspace}
	\end{align}
	is endowed with its natural norm 
	$\|\cdot\|_{\C B}:=\|\cdot\|_{L^2(0,T;H^{1/2})}+\|\cdot\|_{H^1(0,T;L^{2})}$.
	The box constraints $b_{min},\;b_{max}\in L^\infty(\Sigma)$ satisfy
	$b_{min}\leq b_{max}$ a.e. in $\Sigma$.
	Moreover
	$\chi_T$ is a given target function and
	$\lambda_T$, $\lambda_\Sigma$ and $R$ are some prescribed positive constants.

	The coupled PDE system \eqref{eqn:elasticRegEq}-\eqref{eqn:damageRegEq}
	with its initial-boundary conditions \eqref{eqn:boundaryRegEq}-\eqref{eqn:initialRegEq}
	models the lower lever problem and
	consists of the momentum balance equation \eqref{eqn:elasticRegEq}
	(according to the Kelvin-Voigt rheology)
	and a parabolic equation \eqref{eqn:initialRegEq}
	which governs the evolution of a phase-field variable $\chi$.
	The displacement field is denoted by $u$ and
	the variable $\chi$ is usually interpreted in relation with the density of micro-defects
	and therefore influences the material stiffness $\mathbb C(\cdot)$
	which is considered as a function of $\chi$.
%	\noindent
	Moreover the external volume forces are specified by $\ell$, the external 
	surface forces by $b$, the linearized strain tensor by $\e(u)=\frac12(\nabla u+(\nabla u)^T)$ 
	and
	the stress tensor by $\sigma=\mathbb C(\chi)\e(u)+\mathbb D\e(u_t)$.
	The first summand of $\sigma$ contains the elastic contribution whereas the second summand 
	models viscous effects.
	The coefficient $\mathbb C$ designates the fourth-order damage-dependent stiffness tensor 
	and $\mathbb D$ the (damage independent) viscosity tensor.			
%	The second equation \eqref{eqn:damageRegEq} specifies the parabolic evolution law for the 
%	propagation of damage described by the phase-field variable $\chi$. 

	For a mechanical motivation of system \eqref{eqn:elasticRegEq}-\eqref{eqn:initialRegEq} by 
	means of balance
	laws and constitutive relations we refer to \cite{FN96}. Global-in-time well-posedness 
	of strong solutions of the state system \eqref{eqn:elasticRegEq}-\eqref{eqn:initialRegEq} in 2D
	and existence of optimal controls for \textbf{(CP)}
	have been established in \cite{FH15}.
	For further existence, uniqueness and vanishing viscosity results
	for viscous and rate-independent damage models by making use of higher-order Laplacians
	we refer to the work \cite{KRZ11}.
	Besides these results necessary optimality conditions for \textbf{(CP)} have been left open and
	are the topic of the present paper.

	Our main result is stated in Theorem \ref{theorem:main}
	which contains necessary conditions of first-order for minimizers of \textbf{(CP)}.
	We remark that in this contribution we do not include the sub-differential
	$\partial I_{(-\infty,0]}(\chi_t)$ in the damage law \eqref{eqn:damageRegEq}
	and use $\xi(\chi_t)$ instead.
	On the one hand the incorporation of $\partial I_{(-\infty,0]}(\chi_t)$
	in the lower lever problem seems currently out of reach for necessary optimality conditions
	to \textbf{(CP)} and, on the other hand, could be approximated by $\xi(\chi_t)$.
	The nonlinear function $\xi$ is assumed to be 
	monotonically increasing with certain properties that will be fixed in the 
	next section.
	In particular $\xi$ may stem from a regularization process of the subdifferential 
	of the indicator function $I_{(-\infty,0]}$
	(e.g. Yosida- or $C^\infty$-approximations, see \cite[Chapter 5]{NT94})
	which models the so-called irreversibility condition as done in \cite{FH15}.
	Nevertheless, to the author's best knowledge,
	a nonlinearity such as $\xi(\chi_t)$ has not yet been considered in the optimal control
	literature.
	The occurrence of the time-derivative of $\chi$ in the $\xi$-nonlinearity leads to the difficult
	integral term
	$\iint \xi'(\chi_t) q \psi_t $ (where $\psi$ denotes a test-function, $q$ an adjoint variable)
	in the adjoint system.
	We propose a very weak formulation and prove an existence result for the adjoint system.
	To this end we will consider regularizations and
	derive a priori estimates by testing the system with, roughly speaking,
	time-integrated versions of the adjoint variables (see \eqref{timeIntSol} and Lemma \ref{lemmma:adjPQ}).
	A limit passage eventually yields very weak solutions to the adjoint system.

	Let us put our work into perspective.
	In contrast to modeling and analytical aspects of damage models
	the mathematical literature concerning associated optimal control is rather scarce.
	Beside the work \cite{FH15}
	we refer to \cite{WWW14} as well as to the recent preprint \cite{NWW16}
	for optimality systems of time-discretized and regularized damage phase-field models
	which follow the approach ``first time-discretize then optimize''.
	A general framework for shape optimization problems in context with semi-linear
	variational inequalities and, as an application, optimality systems for time-discretized
	damage models are explored in \cite{HS16}.	
	We also point to the work \cite{KL15} for existence
	of optimal controls for a time-continuous degenerating damage model in terms of an obstacle problem
	where the irreversibility condition has been dropped.
%	In our prior work \cite{FH15} well-posedness of the state equations
%	\eqref{eqn:elasticRegEq}-\eqref{eqn:initialRegEq}
%	and existence of optimal controls for \textbf{(CP)} have been shown.
%	Necessary optimality conditions for \textbf{(CP)} however were left open and
%	are the topic of the present paper.
	Furthermore and in opposition to a phase-field approach there is a rich literature employing
	sharp crack models with prescribed paths for optimization problems. We refer to
	\cite{HMO08, KLS12} and the references therein.
	
%	To the best of our knowledge apart from our previous paper \cite{FSH}, there are the works
%	\cite{WWW14} and \cite{KogutLeugering}.  In \cite{KogutLeugering} the authors
%	study an optimal control problem for the mixed boundary value problem for an elastic body 
%	with quasistaic evolution of an internal damage variable. Considering the Lavrentieff phenomenon,
%	they investigate this problem in the class of weak variational solutions. Existence of optimal
%	solutions are obtained, but the derivation of an optimality condition of first-order is missing
%	due to... In \cite{WWW14} an augmented Lagrangian method for the phase-field
%	approach for pressurized fractures is investigated. In constrast to In \cite{WWW14}
%	we here in this paper deal with viscoelastic effects and continue our investigations we started
%	in \cite{FSH}. To this end, we introduce in the following the model:

	\noindent\\\textbf{Structure of the paper.}\\
	The paper is organized as follows. In the next section, we list our assumptions and recall the 
	well-posedness result from \cite{FH15}, which will be the starting point for
	a deeper analysis of the solution operator.
	In Section 3 we prove differentiability of the control-to-state operator and set up the 
	linearized and adjoint problem. More precisely we establish existence of solutions to
	both systems and well-posedness to the first one.
	These intermediate results are summarized in
	Proposition \ref{prop:weakDiff},
	Proposition \ref{prop:uniqueLin}
	and Proposition \ref{prop:strongDiff}.
	At the end of this section the full optimality system is derived.
	We conclude with open problems connected to \textbf{(CP)}
	in the last section.
	
	During the course of the presented analysis we make repeated use of standard inequalities
	and embedding theorems, in particular exploiting
	%\begin{align*}
	%a\,b\leq \delta|a|^2+\underbrace{\frac{1}{4\delta}}_{C_\delta:=}|b|^2\,\quad\forall a,b\in\mathbb R\quad\forall\delta>0,
	%\end{align*}   
	continuous embeddings
	$H^2(\Omega)\subset L^\infty(\Omega)$ as well as
	$H^1(\Omega)\subset L^p(\Omega)$ for any $p\in[1,\infty)$
	valid in the two-dimensional case.
	%In particular, we have
	%\begin{align}
	%\|v\|_{L^p(\Omega)}\leq \tilde{C}_p\|v\|_{H^1(\Omega)}\qquad\forall v\in H^1(\Omega),
	%\|v\|_{L^\infty(\Omega)}\leq \tilde{C}_\infty\|v\|_{H^2(\Omega)}\qquad\forall v\in H^2(\Omega),
	%\end{align}
	%with positive constants $\tilde{C}_p$, $1\leq p\leq\infty$, that only depend on $\Omega$. 

\section{Assumptions and preliminary results}
	Let us collect the assumptions which are used throughout this work
	and restate known results obtained in \cite{FH15} concerning the state system (\ref{eqn:elasticRegEq})-(\ref{eqn:initialRegEq})
	which we extensively use for the rest of our paper.
	For well-posedness of the state system we need the following assumptions.
	\vspace*{0.4em}\\\textbf{Assumptions}
	\begin{enumerate}
	\itemindent 0.5em
		\item[\textbf{(A1)}]
			$\Omega\subseteq\R^n$ with $n\in\{1,2\}$ is assumed to be a bounded $C^2$-domain.
		\item[\textbf{(A2)}]
			The damage-dependent stiffness tensor satisfies $\mathbb C(\cdot)=\mathsf{c}(\cdot)\mathbf C$, where
			the coefficient function $\mathsf{c}$ is assumed to be of the form $\mathsf{c}=\mathsf{c}_1+\mathsf{c}_2$, where $\mathsf{c}_1\in C_{loc}^{1}(\R)$ is convex and $\mathsf{c}_2\in C_{loc}^{1}(\R)$ is concave.
			Moreover, we assume that $\mathsf{c},\mathsf{c}_1',\mathsf{c}_2'$
			(here $'$ denotes its derivative) are bounded,
			Lipschitz continuous and $\mathsf{c}(x)\geq 0$ for all $x\in\R$.
			The 4$^\mathrm{th}$ order stiffness tensor $\mathbf C\in\C L(\R_\mathrm{sym}^{n\times n};\R_\mathrm{sym}^{n\times n})$
			is assumed to be symmetric and positive definite, i.e.
			\begin{align}
				\mathbf C_{ijlk}=\mathbf C_{jilk}=\mathbf C_{lkij}\text{ and }e:\mathbf C e\geq \eta|e|^2\text{ for all }e\in \R_\mathrm{sym}^{n\times n}
					\label{eqn:Ctensor}
			\end{align}
			with constant $\eta>0$.
			%??? C_{loc}^2???		
		\item[\textbf{(A3)}]
			The function $\xi:\R\to\R$ is assumed to be Lipschitz continuous, monotonically increasing
			and $\xi(x)=0$ for $x\leq 0$.
		\item[\textbf{(A4)}]
			The 4$^\mathrm{th}$ order viscosity tensor $\mathbb D$ is given by $\mathbb D=\mu\mathbf C$ 
			and does \textbf{not} depend on the damage variable.
		\item[\textbf{(A5)}]
			The damage-dependent potential function $f$ is assumed to fulfill $f\in C_{loc}^{1}(\R)$
			and the first derivative $f'$ is Lipschitz continuous.
	\end{enumerate}
	Regarding existence of solutions of the optimal control problem \textbf{(CP)}
	we make the following additional assumptions.
	\vspace*{0.4em}\\\textbf{Assumptions}
	\begin{itemize}
	\itemindent 0.5em
		\item[\textbf{(O1)}]
			There are given non-negative constants $\lambda_T$ and $\lambda_\Sigma$.
		\item[\textbf{(O2)}]
			The target damage profile is given by
			$\chi_T\in L^2(\Omega)$.
		\item[\textbf{(O3)}]
			The admissible set of boundary controls $\C B_{adm}\subseteq \C B$
%			is given by \eqref{Badm} with
%			\begin{align*}
%				&b_{min},b_{max}\in L^\infty(\Sigma)\text{ with }b_{min}\leq b_{max}\text{ a.e. in }\Sigma.
%			\end{align*}
			is assumed to be non-empty, closed and bounded.
			$\C B$ is given by \eqref{Bspace}.
%			$\C B_{bd}$ is a nonempty, open and bounded subset of $\C B_{adm}$.
			Furthermore, let the constant $R>0$ be such that
			\begin{align*}
			\|b\|_{\C B}\leq R\quad\text{ for all }b\in \C B_{adm}.
			\end{align*}
	\end{itemize}

	\begin{remark}
		\begin{itemize}
			\item[(i)]
				In particular we may choose $\C B$ as in \eqref{Badm} which
				is then a bounded subset of $L^\infty(\Sigma)$.
			\item[(ii)]
				Note that in \cite{FH15} the assumptions for $\mathsf c_1$ and $\mathsf c_2$
				are stated as $\mathsf c_1\in C^{1,1}(\R)$ convex and $\mathsf c_2\in C^{1,1}(\R)$ concave
				with $\mathsf c,\mathsf c_1',\mathsf c_2'$ bounded and $\mathsf c\geq 0$.
				There, $C^{1,1}(\R)$ denotes the space of differentiable functions
				whose derivatives are Lipschitz continuous.
				However we do not require $\mathsf c_1$ or $\mathsf c_2$  to be bounded.
				In order to avoid confusion we choose the formulation in (A2) above.
		\end{itemize}
	\end{remark}
	For the analytical investigation of \textbf{(CP)}
	we will employ the following function spaces 
	$$
		\C Q:=\C U\times\C X\quad\subset\quad \dQ:=\dU\times\dX\quad\subset\quad \aQ:=\aU\times\aX
	$$
	with the following definitions
	\begin{align*}
	\begin{split}
		&\bullet\;\textit{space for the state system:}\\
		&\qquad\C U:=H^1(0,T;H^2(\Omega;\R^n))\cap W^{1,\infty}(0,T;H^1(\Omega;\R^n))\cap H^2(0,T;L^2(\Omega;\R^n)),\hspace*{3.3em}\\
		&\qquad\C X:=H^1(0,T;H^2(\Omega)),\\
		&\bullet\;\textit{space for the linearized state system:}\\
		&\qquad\dU:=H^1(0,T;H^1(\Omega;\Rn)\cap W^{1,\infty}(0,T;L^2(\Omega;\Rn))\cap H^2(0,T;H^1(\Omega;\Rn)^*),\\
		&\qquad\dX:=H^1(0,T;H^1(\Omega)),\\
		&\bullet\;\textit{space for the adjoint system:}\\
		&\qquad\aU:=L^2(0,T;H^1(\Omega;\Rn))\cap L^\infty(0,T;L^2(\Omega;\Rn)),\\
		&\qquad\aX:=L^2(0,T;H^1(\Omega)).
	\end{split}
	\end{align*}
	Observe that the above spaces are Banach spaces when equipped with their natural norms.
%	\vspace{5mm}
	The following results are taken from \cite[{Theorem~2.11--2.12} and Corollary~2.13--2.14]{FH15}:
	\begin{theorem}\label{wellposedness}
		Suppose that the general assumptions \textbf{(A1)}-\textbf{(A5)} are satisfied.
		Then, we have:
		\begin{itemize}
			\item[(i)]
				The state system (\ref{eqn:elasticRegEq})-(\ref{eqn:initialRegEq}) has for any $b\in\C B$,
				$\ell\in L^2(Q)$
	%			\begin{subequations}
	%			\label{PDE}
	%			\begin{align}
	%				&\int_Q u_{tt}\cdot\varphi+\mathbb C(\chi)\e(u):\e(\varphi)+\mathbb D\e(u_t):\e(\varphi)-\ell\cdot\varphi\dxt=\int_\Sigma b\cdot\varphi\dxt,\\
	%				&\int_Q \nabla\chi\cdot\nabla\psi+\nabla\chi_t\cdot\nabla\psi+\chi_t\psi+\xi(\chi_t)\psi+\frac{1}{2}\mathbb C'(\chi)\e(u):\e(u)\psi+f'(\chi)\psi\dxt=0
	%			\end{align}
	%			\end{subequations}
	%			for all $(\varphi,\psi)\in L^2(0,T;H^1(\Omega;\Rn))\times L^2(0,T;H^1(\Omega))$ and
				and initial values $u^0\in H^2(\Omega;\R^n)$, $v^0\in H^1(\Omega;\R^n)$ and $\chi^0\in H^2(\Omega)$
				a unique solution $(u,\chi)\in\C Q$ (which we call strong solution).
			\item[(ii)]
				Suppose that also \textbf{(O3)} is fulfilled
				and that $\ell$, $u^0$, $v^0$ and $\chi^0$ are fixed.
				Then there exists a positive constant $K^*_1$ (depending on $R$) such that for every
				$b\in\C B_{adm}$ the associated solution $(u,\chi)\in\C Q$ satisfies
				\begin{align}
				\label{aprioriEst}
				\|(u,\chi)\|_{\C Q}\leq K_1^*.
				\end{align}
			\item[(iii)]
				Under the assumption in (ii)
				there also exists a positive constant $K_2^*$ (depending on $R$) such that the following holds:
				Whenever $b_1,b_2\in\C B_{adm}$ are given and $(u_1,\chi_1), (u_2,\chi_2)\in\C Q$ denote the
				associated solutions of the state system, we then have
				\begin{align}
				\label{eqn:lipEst}
				\begin{aligned}
					&\|u_1-u_2\|_{\dU}
						+\|\chi_1-\chi_2\|_{\dX}
						\leq K_2^*\|b_1-b_2\|_{L^2(\Sigma)}.
				\end{aligned}
				\end{align}		
		\end{itemize}
	\end{theorem}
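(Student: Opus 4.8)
As Theorem~\ref{wellposedness} is quoted from \cite{FH15}, I only indicate the route I would take. Existence in (i) is obtained by a semi-implicit time-discretization (Rothe scheme): on a partition of $[0,T]$ with step $\tau$ one solves at each level a coupled elliptic system, the $u$-update being a linear elliptic problem with the damage variable frozen from the previous level (solvable by Lax--Milgram since $\mathbb D=\mu\mathbf C$ is positive definite), and the $\chi$-update a strongly monotone elliptic problem because $\xi$ is monotone by (A3) and $-\Delta(\cdot)$ together with the zero-order term is coercive. One then derives $\tau$-uniform a priori estimates (see below), builds piecewise-constant and piecewise-affine interpolants, extracts weakly/weakly-$\star$ convergent subsequences in $\C Q$, and passes to the limit; the nonlinear terms $\mathbb C(\chi)\e(u)$, $\mathbb C'(\chi)\e(u):\e(u)$, $f'(\chi)$ converge using (A2), (A5) together with strong convergence of $\chi,\nabla\chi,\e(u)$ from Aubin--Lions, and $\xi(\chi_t)$ by the same route (or by Minty's trick on the monotone term).

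The a priori estimates are the technical core and proceed in two layers. First, testing \eqref{eqn:elasticRegEq} with $u_t$, testing \eqref{eqn:damageRegEq} with $\chi_t$, adding, and using the product rule $\frac{d}{dt}\big(\tfrac12\int_\Omega\mathbb C(\chi)\e(u):\e(u)\big)=\int_\Omega\mathbb C(\chi)\e(u):\e(u_t)+\tfrac12\int_\Omega\mathbb C'(\chi)\chi_t\,\e(u):\e(u)$, the coupling terms combine into a total time-derivative. The dissipative side $\mu\eta\|\e(u_t)\|_{L^2}^2+\|\chi_t\|_{L^2}^2+\|\nabla\chi_t\|_{L^2}^2+\int_\Omega\xi(\chi_t)\chi_t$ is non-negative, the $f(\chi)$ and boundary contributions $\int_\Gamma b\cdot u_t$ are absorbed via a trace inequality and $\|b\|_{\C B}$, and Korn's inequality turns the $\e(u_t)$-dissipation into control of $u_t$ in $L^2(0,T;H^1)$, hence of $u$ in $W^{1,\infty}(0,T;H^1)$; Gronwall also yields $\chi$ in $W^{1,\infty}(0,T;H^1)\cap H^1(0,T;H^1)$. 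Second, for the strong regularity one differentiates \eqref{eqn:elasticRegEq} formally in time (or tests with a suitable second-order operator) to bound $u_{tt}$, then reads \eqref{eqn:elasticRegEq} as $-\di(\mathbb D\e(u_t))=\ell-u_{tt}+\di(\mathbb C(\chi)\e(u))$ and applies elliptic $L^2$-regularity with the Neumann datum $b\in L^2(0,T;H^{1/2}(\Gamma;\Rn))$ to obtain $u_t\in L^2(0,T;H^2)$ and $u\in L^\infty(0,T;H^2)$; here the two-dimensional embedding $H^1\hookrightarrow L^p$ makes $\mathbb C(\chi)\e(u)\in H^1$ tractable. For $\chi$ one tests \eqref{eqn:damageRegEq} with $-\Delta\chi_t$ and uses elliptic regularity on $-\Delta(\chi_t+\chi)$; the decisive quadratic term $\mathbb C'(\chi)\e(u):\e(u)$ lies in $L^\infty(0,T;L^2)$ because $\e(u)\in L^\infty(0,T;L^4)$ in 2D, whence $\chi\in H^1(0,T;H^2)=\C X$. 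This gives (i); estimate (ii) is the same chain of inequalities tracked with constants depending only on the data and on $R$ via \eqref{Badm} and \textbf{(O3)}.

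For (iii), where I expect the main difficulty, I would take two solutions $(u_1,\chi_1),(u_2,\chi_2)$ with controls $b_1,b_2$, subtract the equations, and test the difference of the momentum equations with $(u_1-u_2)_t$ and the difference of the damage equations with $\chi_1-\chi_2$ (the lower-regularity level matching the weaker norms of $\dU\times\dX$ on the left of \eqref{eqn:lipEst}). The dangerous terms are $(\mathbb C(\chi_1)-\mathbb C(\chi_2))\e(u_1)$ and $\mathbb C'(\chi_1)\e(u_1):\e(u_1)-\mathbb C'(\chi_2)\e(u_2):\e(u_2)$; both are split by adding and subtracting mixed terms and estimated through the Lipschitz bounds of (A2), Hölder's inequality, and the uniform $L^\infty$/$L^4$ bounds from (ii), producing constants depending on $K_1^*$, hence on $R$. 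The monotone term contributes $\int_\Omega(\xi(\chi_{1,t})-\xi(\chi_{2,t}))(\chi_{1,t}-\chi_{2,t})\ge 0$ by (A3) and is simply discarded, while the boundary term furnishes $\|b_1-b_2\|_{L^2(\Sigma)}$ on the right. A Gronwall argument closes \eqref{eqn:lipEst}, and $b_1=b_2$ yields uniqueness in (i).

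The principal obstacle throughout is to organize the bootstrap so that the $H^2$-regularity of $u$, the full estimate on $\chi_t$ from testing with $-\Delta\chi_t$, and the quadratic damage-coupling term $\mathbb C'(\chi)\e(u):\e(u)$ close simultaneously without circularity; all three parts ultimately hinge on the two-dimensional embeddings $H^1\hookrightarrow L^p$ and $H^2\hookrightarrow L^\infty$ and on $b$ being regular enough in $\C B$ to feed the elliptic-regularity step for $u$, which is exactly why the analysis is confined to $n\le 2$. \ep
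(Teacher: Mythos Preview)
Your sketch is broadly in line with the strategy of \cite{FH15}, which the present paper merely cites for parts (i)--(ii) and for the weaker stability estimate underlying (iii). Two points deserve correction or completion.

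First, in your stability argument for (iii) you say you will test the difference of the damage equations with $\chi_1-\chi_2$, yet the monotone contribution you write down, $\int_\Omega(\xi(\chi_{1,t})-\xi(\chi_{2,t}))(\chi_{1,t}-\chi_{2,t})\ge 0$, is what arises from testing with $(\chi_1-\chi_2)_t$. Testing with $\chi_1-\chi_2$ would produce $\int_\Omega(\xi(\chi_{1,t})-\xi(\chi_{2,t}))(\chi_1-\chi_2)$, which has no sign, and would moreover not deliver the $H^1(0,T;H^1)$-control on $\chi_1-\chi_2$ required by $\dX$. The correct test function is $(\chi_1-\chi_2)_t$, consistently with the paper's treatment of the analogous linearized estimate in Proposition~\ref{prop:uniqueLin}; your monotonicity line is then valid and the viscous term $\|\nabla(\chi_1-\chi_2)_t\|_{L^2}^2$ closes the $\dX$-norm.

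Second, your testing with $(u_1-u_2)_t$ yields the $W^{1,\infty}(0,T;L^2)\cap H^1(0,T;H^1)$ part of $\dU$, but not the $H^2(0,T;(H^1)^*)$ component. This is precisely the one piece the present paper supplies beyond \cite{FH15}: in Remark~2.2(i) it obtains $\|\partial_{tt}(u_1-u_2)\|_{L^2(0,T;(H^1)^*)}\le C\|b_1-b_2\|_{L^2(\Sigma)}$ by comparison in the weak form of \eqref{eqn:elasticRegEq}, using the already established bounds on $\chi_1-\chi_2$ and $u_1-u_2$ together with \eqref{aprioriEst}. You should add this short comparison step to complete \eqref{eqn:lipEst} in the full $\dU$-norm.
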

	
	\begin{remark}
		\begin{itemize}
			\item[(i)]
				Note that in \cite{FH15} a weaker stability estimate
				\begin{align}
				\label{eqn:lipEstRed}
				\begin{aligned}
					&\|u_1-u_2\|_{W^{1,\infty}(0,T;L^2)\cap H^1(0,T;H^1)}
						+\|\chi_1-\chi_2\|_{H^1(0,T;H^1)}
						\leq C\|b_1-b_2\|_{L^2(\Sigma)}.
				\end{aligned}
				\end{align}
				has been proven.
				To obtain \eqref{eqn:lipEst} one also needs to establish
				(here $C>0$ depends on $R$)
				\begin{align*}
					\|u_1-u_2\|_{H^2(0,T;(H^1)^*)}
						\leq C\|b_1-b_2\|_{L^2(\Sigma)}
				\end{align*}
				which follows by \eqref{eqn:lipEstRed}, \eqref{aprioriEst} and a comparison
				of the corresponding terms in \eqref{eqn:elasticRegEq}.
				In fact we find by subtraction for a.e. $t\in(0,T)$ and all $\ph\in H^1(\Omega)$:
				\begin{align*}
				\begin{split}
					&\left\langle\partial_{tt}u_1(t)-\partial_{tt}u_2(t),\ph\right\rangle_{H^1}\\
						&\quad=-\int_\Omega(\mathbb C(\chi_1(t))-\mathbb C(\chi_2(t)))\e(u_1(t)):\e(\ph)+\mathbb C(\chi_2(t))\e(u_1(t)-u_2(t)):\e(\ph)\dx\\
						&\qquad-\int_\Omega\DD\e(\partial_{t}u_1(t)-\partial_{t}u_2(t)):\e(\ph)\dx
						+\int_\Gamma (b_1(t)-b_2(t))\cdot\ph\dx
				\end{split}
				\end{align*}
				and, consequently,
				\begin{align*}
					&\|\partial_{tt}u_1-\partial_{tt}u_2\|_{L^2(0,T;(H^1)^*)}\\
						&\quad\leq C\|\chi_1-\chi_2\|_{L^2(0,T;H^1)}+C\|u_1-u_2\|_{H^1(0,T;H^1)}
							+C\|b_1-b_2\|_{L^2(\Sigma)}
						\leq \widetilde C\|b_1-b_2\|_{L^2(\Sigma)}.
				\end{align*}
			\item[(ii)]
				It follows from Theorem \ref{wellposedness}, in particular, that the control-to-state
				mapping $\C S:\C B\to \C Q$ 
				given by $S(b):=(u,\chi)$ is well defined.
				Moreover, $\C S$ is Lipschitz continuous when viewed as a mapping from the subset
				$\C B_{adm}$ of $\C B$ into the space $\dot{\C Q}$. 
		\end{itemize}
	\end{remark}	
	With a proof that resembles \cite[Theorem 3.6]{FH15} and needs no repetition here, we can 
	show the following existence result for optimal controls:
	\begin{theorem}[cf. {\cite[Theorem 3.6]{FH15}}]\label{ExistenceOptimalControl}
		Suppose that the assumptions \textbf{(A1)}-\textbf{(A5)} and \textbf{(O1)}-\textbf{(O3)}
		are fulfilled.
		Then the optimal control problem \textbf{(CP)} admits a solution
		$(\chi,b)\in\C X\times\C B_{adm}$.
	\end{theorem}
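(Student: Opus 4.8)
The plan is to run the direct method of the calculus of variations, exactly as in the proof of \cite[Theorem 3.6]{FH15}; the substantial analytical input is already provided by Theorem \ref{wellposedness}. First I would check that the infimum is finite. By \textbf{(O3)} the admissible set $\C B_{adm}$ is non-empty, so I fix some $b\in\C B_{adm}\subset\C B$; by Theorem \ref{wellposedness}(i) the state $\C S(b)=(u,\chi)$ exists in $\C Q=\C U\times\C X$, and $\C J(\chi,b)<\infty$ since $\C X=H^1(0,T;H^2(\Omega))\hookrightarrow C([0,T];L^2(\Omega))$ and $\C B\hookrightarrow L^2(\Sigma)$. Hence $m:=\inf\{\C J(\chi,b):b\in\C B_{adm},\ (u,\chi)=\C S(b)\}$ lies in $[0,\infty)$, the lower bound being immediate from $\C J\geq 0$. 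I then pick a minimizing sequence $(b_n)_{n\in\N}\subset\C B_{adm}$ with states $(u_n,\chi_n)=\C S(b_n)$ and $\C J(\chi_n,b_n)\to m$.

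Next I would extract convergent subsequences. By \textbf{(O3)} we have $\|b_n\|_{\C B}\leq R$, and since $\C B$, being the intersection of two Hilbert spaces, is reflexive, a subsequence (not relabelled) satisfies $b_n\weaklim b$ in $\C B$. The key point is that the trace embedding $\C B\compact L^2(\Sigma)$ is compact: this follows from the Aubin--Lions--Simon lemma using the compact embedding $H^{1/2}(\Gamma)\compact L^2(\Gamma)$ and the bound on $\partial_t b_n$ in $L^2(0,T;L^2(\Gamma))$ coming from the $H^1(0,T;L^2(\Gamma))$-factor of $\C B$. Consequently, along a further subsequence, $b_n\to b$ strongly in $L^2(\Sigma)$ and pointwise a.e. on $\Sigma$. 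For the concrete admissible set \eqref{Badm} this already yields $b\in\C B_{adm}$: the box constraints $b_{min}\leq b_n\leq b_{max}$ pass to the pointwise limit, while weak lower semicontinuity of $\|\cdot\|_{\C B}$ together with $b_n\weaklim b$ gives $\|b\|_{\C B}\leq R$; in the abstract setting of \textbf{(O3)} one instead invokes that a closed convex --- hence weakly sequentially closed --- set contains the weak limits of its sequences.

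Once $b\in\C B_{adm}$ is secured, the stability estimate \eqref{eqn:lipEst} of Theorem \ref{wellposedness}(iii), applied to the pairs $b_n,b$, gives $\|u_n-u\|_{\dU}+\|\chi_n-\chi\|_{\dX}\leq K_2^*\|b_n-b\|_{L^2(\Sigma)}\to 0$, where $(u,\chi):=\C S(b)\in\C Q$; thus $(u_n,\chi_n)\to(u,\chi)$ strongly in $\dQ$ and, in particular, no delicate limit passage in the nonlinear terms $\mathbb C(\chi_n)\e(u_n)$, $\mathbb C'(\chi_n)\e(u_n):\e(u_n)$, $\xi(\partial_t\chi_n)$ and $f'(\chi_n)$ of the state system is required. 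To conclude, I would pass to the limit in the cost functional: since $\chi_n\to\chi$ in $\dX=H^1(0,T;H^1(\Omega))\hookrightarrow C([0,T];H^1(\Omega))$ we have $\chi_n(T)\to\chi(T)$ in $L^2(\Omega)$, and together with $b_n\to b$ in $L^2(\Sigma)$ this shows $\C J(\chi_n,b_n)\to\C J(\chi,b)$, hence $\C J(\chi,b)=m$. Since $b\in\C B_{adm}$ and $\chi\in\C X$, the pair $(\chi,b)$ is a minimizer of \textbf{(CP)}.

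Regarding the main obstacle: with Theorem \ref{wellposedness} in hand there is essentially none, which is precisely why the proof needs no repetition. The only points demanding attention are the reflexivity of $\C B$, the compact trace embedding $\C B\compact L^2(\Sigma)$, and the weak sequential closedness of $\C B_{adm}$. Were the stability estimate \eqref{eqn:lipEst} not available, the genuine difficulty would shift to the limit passage in the highly nonlinear terms of the state system, which would then have to be handled via Aubin--Lions compactness for $(\chi_n)$ and $(\e(u_n))$ in suitable spaces combined with the (Lipschitz) continuity of $\mathbb C,\mathbb C',f'$ and $\xi$.
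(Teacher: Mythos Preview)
Your sketch is correct and follows precisely the direct-method argument that the paper defers to \cite[Theorem~3.6]{FH15}. The only minor slip is that \textbf{(O3)} by itself does not assume convexity of $\C B_{adm}$ (that is added only later as \textbf{(B4)}), so in the abstract setting weak sequential closedness must be taken as part of the hypothesis rather than derived from ``closed convex''; for the concrete set \eqref{Badm} your explicit verification via the pointwise box constraints and weak lower semicontinuity of $\|\cdot\|_{\C B}$ is fine.
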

	In the present contribution we proceed with a first-order optimality system
	which will require the following enhanced differentiability assumptions
	in addition to the assumptions \textbf{(A1)}-\textbf{(A5)} and \textbf{(O1)}-\textbf{(O3)}:
	\vspace*{0.4em}\\\textbf{Assumptions}
	\begin{itemize}
	\itemindent 0.5em
		\item[\textbf{(B1)}]
			$\mathbb C(\cdot)=\mathsf{c}(\cdot)\mathbf C$
			from \textbf{(A2)} is assumed to satisfy $\mathsf c\in C_{loc}^3(\R)$;
		\item[\textbf{(B2)}]
			$\xi$ from \textbf{(A3)} is assumed to satisfy
			$\xi\in C_{loc}^2(\R)$ and $\xi''$ is bounded;
		\item[\textbf{(B3)}]
			$f$ from \textbf{(A5)} is assumed to be $f\in C_{loc}^2(\R)$;
		\item[\textbf{(B4)}]
			$\C B_{adm}$ from \textbf{(O3)} is assumed to be convex.
	\end{itemize}

\section{Analysis of a first-order optimality system to \textbf{(CP)}}
	In this section our aim is to derive a first-order optimality system to the optimal
	control problem \textbf{(CP)}.
%	At first we state a result about existence of optimal controls.
	We will prove %a well-posedness result of the corresponding linearized system
%	to \eqref{eqn:elasticRegEq}-\eqref{eqn:initialRegEq} in order to prove
	G\^{a}teaux differentiability of the solution operator and
	weak solvability of a corresponding adjoint problem.
	The latter one requires several approximation schemes and carefully designed estimations
	to handle the term $\int_Q \xi'(\chi_t)q\psi_t$ (weak form) which arises from the difficult
	non-linearity $\xi(\chi_t)$ in the state system.
	A priori estimates for the approximated system in the adjoint space $\aQ$ are derived by
	testing it with certain modified anti-derivatives with respect to time of the adjoint variables
	$p$ and $q$.
	A challenging part in the calculations is to obtain the a priori estimates globally-in-time
	on the entire interval $[0,T]$.
	Finally, at the end of this section, we will assemble the pieces and derive a first-order
	optimality system.

\subsection{Differentiability of the control-to-state mapping and the linearized state system}
	This part is devoted to the proof of G\^{a}teaux differentiability of the control-to-state
	mapping $\C S:\C B\to \dQ$.
	This endeavor is splitted into a series of intermediate results
	which we briefly describe below:
	\begin{itemize}
		\item
			Proposition \ref{prop:weakDiff}:
			By considering difference quotients of the state system in combination with a limit passage
			we prove
			existence of the linearized state system and a
			differentiability property of the control-to-state mapping $\C S:\C B\to \dQ$
			in a weak topology.
		\item
			Proposition \ref{prop:uniqueLin}:
			We establish a stability result and, consequently, well-posedness of
			the linearized problem.
		\item
			Proposition \ref{prop:strongDiff}:
			Based on the previous results we are in the position to show
			G\^{a}teaux differentiability of $\C S$ by refining the estimates for the difference 
			quotients of the state system and the linearized system.
%			Together with the stability estimate of the linearized problem we even obtain
%			Fr\'echet differentiability.
	\end{itemize}
	\begin{proposition}[Convergence to the linearized problem]
	\label{prop:weakDiff}
		Suppose that the assumptions \textbf{(A1)}-\textbf{(A5)}
		and \textbf{(B1)}-\textbf{(B3)} are fulfilled.
		Then we have:
		\begin{itemize}
			\item[(i)]
				The control-to-state mapping $\C S:\C B\to\dQ$ is differentiable in the following sense:
				\begin{align}
					\frac{\C S(b+\lambda\db)-\C S(b)}{\lambda}\weaklim (\dot u,\dot\chi)\text{ weakly-star in }\dot{\C Q}\,\text{ as }\lambda\to 0
				\label{limitWeak}
				\end{align}
				for all $b, \db\in\C B$.
			\item[(ii)]
				Furthermore the limit function $(\dot u,\dot\chi)\in\dot{\C Q}$ in (\ref{limitWeak}) is a
				weak solution of the linearized state system at $(u,\chi)=\C S(b)\in\C Q$
				in direction $h\in\C B$, i.e. $(\du,\dchi)\in\dQ$ fulfills
				\begin{align}
					&
					\begin{aligned}
						&\int_0^T\langle \du_{tt},\varphi\rangle_{H^1}\dt+\int_Q \mathbb C'(\chi)\dchi\e(u):\e(\varphi)+\mathbb C(\chi)\e(\du):\e(\varphi)+\mathbb D\e(\du_t):\e(\varphi)\dxt\\
						&\quad=\int_\Sigma \db\cdot\varphi\dxt,
					\end{aligned}
					\label{linUPDE}
					\\
					&
					\begin{aligned}
						&\int_Q \nabla\dchi\cdot\nabla\psi+\nabla\dchi_t\cdot\nabla\psi+\dchi_t\psi+\xi'(\chi_t)\dchi_t\psi+\frac{1}{2}\mathbb C''(\chi)\dchi\e(u):\e(u)\psi\dxt\\
						&\quad+\int_Q \mathbb C'(\chi)\e(\du):\e(u)\psi+f''(\chi)\dchi\psi\dxt=0
					\end{aligned}
					\label{linChiPDE}
				\end{align}
		for all $(\varphi,\psi)\in \aQ$ and
		with initial values $u(0)=u_t(0)=\chi(0)=0$.
		\end{itemize}
	\end{proposition}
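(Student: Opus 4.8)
The plan is to pass to the limit $\lambda\to 0$ in the difference quotients of the state system. Fix $b,h\in\C B$ and set $(u^\lambda,\chi^\lambda):=\C S(b+\lambda h)$, $(u,\chi):=\C S(b)$, and define the difference quotients $\du^\lambda:=\lambda^{-1}(u^\lambda-u)$, $\dchi^\lambda:=\lambda^{-1}(\chi^\lambda-\chi)$. Subtracting the two copies of \eqref{eqn:elasticRegEq}--\eqref{eqn:initialRegEq} and dividing by $\lambda$, one sees that $(\du^\lambda,\dchi^\lambda)$ solves a system which, modulo remainder terms involving differences like $\mathbb C(\chi^\lambda)-\mathbb C(\chi)$, looks like the linearized system \eqref{linUPDE}--\eqref{linChiPDE} with the coefficients evaluated along suitable intermediate points (via the fundamental theorem of calculus / mean value theorem applied to $\mathbb C$, $f'$, $\xi$, using (B1)--(B3)). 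The first task is a uniform bound $\|(\du^\lambda,\dchi^\lambda)\|_{\dQ}\leq C$, independent of $\lambda$: this is exactly the content of the Lipschitz estimate \eqref{eqn:lipEst} in Theorem \ref{wellposedness}(iii) (note $b+\lambda h\in\C B_{adm}$ fails in general, but the estimate \eqref{eqn:lipEstRed} and the comparison argument in the remark only use $\C B$-boundedness of the data, so a bound of the required form holds as long as $b$ and $b+\lambda h$ stay in a fixed bounded set, which they do for $|\lambda|$ small). Hence by Banach--Alaoglu there is a subsequence with $\du^\lambda\weakstarlim\du$ in $\dU$ and $\dchi^\lambda\weakstarlim\dchi$ in $\dX$.

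Next I would identify the limit. The spaces $\dU,\dX$ embed compactly into, e.g., $C([0,T];L^2)$ and $L^2(0,T;L^2)$ by Aubin--Lions, so along a further subsequence $\du^\lambda\to\du$ and $\dchi^\lambda\to\dchi$ strongly in $L^2(Q)$ and a.e.; moreover $\chi^\lambda\to\chi$, $\partial_t\chi^\lambda\to\partial_t\chi$ in appropriate strong/a.e. senses by \eqref{eqn:lipEstRed} since $\|b+\lambda h-b\|_{L^2(\Sigma)}=\lambda\|h\|_{L^2(\Sigma)}\to 0$, and similarly for $u^\lambda,\partial_t u^\lambda$. With these convergences in hand one passes to the limit termwise in the weak formulation of the difference-quotient system tested against $(\varphi,\psi)\in\aQ$. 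The linear terms ($\du^\lambda_{tt}$, $\mathbb D\e(\du^\lambda_t)$, $\nabla\dchi^\lambda$, $\nabla\dchi^\lambda_t$, $\dchi^\lambda_t$, the boundary term $\int_\Sigma h\cdot\varphi$) pass by weak convergence; the nonlinear coefficient terms converge because they are products of a weakly convergent factor ($\dchi^\lambda$, $\e(\du^\lambda)$) against a strongly convergent one (the difference-quotient-of-coefficient factors $\int_0^1\mathbb C'(\chi+s\lambda\dchi^\lambda)\,ds\to\mathbb C'(\chi)$ etc. in $L^\infty$-weak-$\star$/strong $L^2$, using boundedness of $\mathsf c',\mathsf c'',\mathsf c'''$, $f''$, $\xi''$ together with the a.e. convergence and dominated convergence). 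This yields that $(\du,\dchi)$ satisfies \eqref{linUPDE}--\eqref{linChiPDE}. The initial conditions $\du(0)=\du_t(0)=\dchi(0)=0$ are inherited because $u^\lambda(0)=u^0=u(0)$ etc., so the difference quotients have zero initial data, and this passes to the limit using continuity of the trace at $t=0$ from $\dU\hookrightarrow C([0,T];L^2)$ (and $H^1(0,T;(H^1)^*)$ for the velocity).

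Finally, to upgrade the subsequential convergence in \eqref{limitWeak} to convergence of the whole family as $\lambda\to 0$, I would invoke uniqueness of the limit: the limit system \eqref{linUPDE}--\eqref{linChiPDE} together with the zero initial data has a unique solution in $\dQ$. In this proposition uniqueness is not yet available (it is Proposition \ref{prop:uniqueLin}), so here I would only claim the subsequential statement, or cite the forthcoming uniqueness result; since every subsequence has a sub-subsequence converging weakly-$\star$ to the (unique) solution of the linearized system, the full net converges. The main obstacle, as the authors emphasize, is the term $\int_Q\xi'(\chi_t)\dchi_t\psi$: in the difference quotient it appears as $\int_Q\big(\int_0^1\xi'(\partial_t\chi+s\lambda\partial_t\dchi^\lambda)\,ds\big)\partial_t\dchi^\lambda\,\psi$, and one must ensure the bracket converges strongly enough (in $L^2(Q)$, say) to compensate the merely weak convergence of $\partial_t\dchi^\lambda$. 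This is where (B2) — $\xi\in C^2_{loc}$ with $\xi''$ bounded, so $\xi'$ is globally Lipschitz — is essential: it gives $\big|\int_0^1\xi'(\partial_t\chi+s\lambda\partial_t\dchi^\lambda)\,ds-\xi'(\partial_t\chi)\big|\leq \tfrac{\lambda}{2}\|\xi''\|_\infty|\partial_t\dchi^\lambda|$, and since $\lambda\partial_t\dchi^\lambda=\partial_t\chi^\lambda-\partial_t\chi\to 0$ in $L^2(Q)$ by \eqref{eqn:lipEstRed}, the bracket tends to $\xi'(\partial_t\chi)$ in $L^2(Q)$, while $\partial_t\dchi^\lambda\weakstarlim\partial_t\dchi$ in, say, $L^2(0,T;H^1)$; the product then converges, which closes the argument.
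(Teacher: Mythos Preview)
Your proposal is correct and follows essentially the same route as the paper: uniform $\dQ$-bounds on the difference quotients via the Lipschitz estimate, weak-$\star$ compactness, and termwise passage to the limit in the nonlinear terms using the mean-value/integral representation together with dominated convergence. You are in fact a bit more careful than the paper on two points (the observation that only $\C B$-boundedness of $b,b+\lambda h$ is needed for the Lipschitz estimate, and the upgrade from subsequential to full convergence via the uniqueness in Proposition~\ref{prop:uniqueLin}), but the core argument is the same.
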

	\begin{proof}
		Let $\lambda\in\R$ and $b, b+\lambda\db\in\C B$ and define
		$(u,\chi):=\C S(b)$ and $(u^\lambda,\chi^\lambda):=\C S(b+\lambda\db)$.
		By Theorem \ref{wellposedness} (ii)-(iii) there exist positive constants $K_1^*$ and $K_2^*$ 
		(depending on $R$) such that 
		\begin{align}
			&\left\|u^\lambda\right\|_{\C U}+\left\|\chi^\lambda\right\|_{\C X} \leq K_1^*,
			&&\left\|\frac{u^\lambda-u}{\lambda}\right\|_{\dU}+\left\|\frac{\chi^\lambda-\chi}{\lambda}\right\|_{\dX}
				\leq K_2^*\|h\|_{L^2(0,T;L^2)}.
			\label{linBound}
		\end{align}
		Therefore the sequence 
		$\{(\frac{u^{\lambda}-u}{\lambda},\frac{\chi^{\lambda}-\chi}{\lambda})\}$ 
		is uniformly bounded in $\dU\times\dX$ with respect to $\lambda$ and we may
		extract a weakly convergent subsequence and obtain by omitting the subscript
		\begin{align}\label{limitWeakProof}
		\left(\frac{u^{\lambda}-u}{\lambda},\frac{\chi^{\lambda}-\chi}{\lambda}\right)
			\rightarrow (\dot u,\dot\chi)\text{ weakly-star in }\dot{\C Q}\,\text{ as }\lambda\to 0. 
		\end{align}
		In the next step we are going to show that $(\dot u,\dot\chi)\in\dQ$ is a
		weak solution of the linearized system \eqref{linUPDE}-\eqref{linChiPDE} at
		$(u,\chi)=\C S(b)\in \C Q$ in direction $h$.
		We sketch the passage to the limit for the nonlinear terms.
		To this end we prove the following convergence statements as $\lambda\to 0$:
		\begin{align*}
			\text{(a)}\quad&\int_Q\frac{\mathbb C(\chi^\lambda)\e(u^\lambda)-\mathbb C(\chi)\e(u)}{\lambda}:\e(\varphi)\dxt\rightarrow\int_Q \mathbb C'(\chi)\dchi\e(u):\e(\varphi)+\mathbb C(\chi)\e(\du):\e(\varphi)\dxt,\\
			\text{(b)}\quad&\int_Q\frac{1}{2}\frac{\mathbb C'(\chi^\lambda)\e(u^\lambda):\e(u^\lambda)-\mathbb C'(\chi)\e(u):\e(u)}{\lambda}\psi\dxt\\
				&\qquad\qquad\qquad\rightarrow\int_Q\frac{1}{2}\mathbb C''(\chi)\dchi\e(u):\e(u)+\mathbb C'(\chi)\e(\du):\e(u)\psi\dxt,\\
			\text{(c)}\quad&\int_Q \frac{f'(\chi^\lambda)-f'(\chi)}{\lambda}\psi\dxt\rightarrow\int_Q f''(\chi)\dchi\psi\dxt,\\
			\text{(d)}\quad&\int_Q \left(\frac{\xi(\chi^\lambda_t)-\xi(\chi_t)}{\lambda}\right)\psi\dxt\rightarrow \int_Q \xi'(\chi_t)\dchi_t\psi\dxt.
		\end{align*}
		and test-functions $(\varphi,\psi)\in \aQ$.
		%\vspace*{0.9cm}
		\begin{itemize}
			\item[]\hspace*{-2em}To (a):
				We apply the following splitting
				\begin{align*}
					&\int_Q \frac{\mathbb C(\chi^\lambda)\e(u^\lambda)-\mathbb C(\chi)\e(u)}{\lambda}:\e(\ph)\dxt\\
						&\quad=\underbrace{\int_Q\frac{\mathbb C(\chi^\lambda)-\mathbb C(\chi)}{\lambda}\e(u):\e(\ph)\dxt}_{=:T_1}
							+\underbrace{\int_Q\mathbb C(\chi^\lambda)\e\Big(\frac{u^\lambda-u}{\lambda}\Big):\e(\ph)\dxt}_{=:T_2}.
				\end{align*}
				The first term is treated by
				the mean value theorem applied to each tensor component (applicable due to Assumption \textbf{(A2)}),
				e.g.
				$$
					\frac{\mathbb C_{ijkl}(\chi^\lambda)-\mathbb C_{ijkl}(\chi)}{\lambda}=\mathbb C_{ijkl}'(\chi+\bar{\lambda}_{ijkl}(\chi^\lambda-\chi))\left(\frac{\chi^\lambda-\chi}{\lambda}\right)
				$$
				for values $\bar{\lambda}_{ijkl}\in[0,\lambda]$,
				\begin{align*}
					T_1=\int_Q\{\mathbb C_{ijkl}'(\chi+\bar{\lambda}_{ijkl}(\chi^\lambda-\chi))\}_{0\leq i,j,k,l\leq 1}\left(\frac{\chi^\lambda-\chi}{\lambda}\right)\e(u):\e(\varphi)\dxt
				\end{align*}
				By using the dominated convergence theorem of Lebesgue
				and the a priori estimates in \eqref{linBound}, we obtain for a subsequence
				\begin{align*}
					&T_1\to\int_Q \mathbb C'(\chi)\dchi\e(u):\e(\varphi)\dxt,
					&&T_2\to\int_Q \mathbb C(\chi)\e(\du):\e(\varphi)\dxt.
				\end{align*}
			\item[]\hspace*{-2em}To (b):
				Via splitting and similar arguments we obtain for a subsequence $\lambda\to 0$
				\begin{align*}
					&\int_Q\frac{1}{2}\frac{\mathbb C'(\chi^\lambda)\e(u^\lambda):\e(u^\lambda)-\mathbb C'(\chi)\e(u):\e(u)}{\lambda}\psi\dxt\\
					&=\underbrace{\int_Q\frac{1}{2}\frac{\mathbb C'(\chi^\lambda)-\mathbb C'(\chi)}{\lambda}\e(u):\e(u)\psi\dxt}_{
							\to\int_Q\frac{1}{2}\mathbb C''(\chi)\dchi\e(u):\e(u)\psi\dxt}
						+\underbrace{\int_Q\mathbb C'(\chi^\lambda)\e\left(\frac{u^\lambda-u}{\lambda}\right):\e\left(\frac{u^\lambda+u}{2}\right)\psi\dxt}_{
							\to\int_Q\mathbb C'(\chi)\e(\du):\e(u)\psi\dxt}.
				\end{align*}
			\item[]\hspace*{-2em}To (c)-(d):
				These properties follow by similar arguments with less effort.
		\end{itemize}
		\ep
	\end{proof}

	\begin{proposition}[Stability of the linearized problem]
	\label{prop:uniqueLin}
		Let $b\in\C B$ be given and denote $(u,\chi)=\C S(b)$.
		Furthermore, let $h_1, h_2\in\C B$ be two given directions.
		Then for two weak solutions $(\dot u_1,\dot \chi_1)$ and $(\dot u_2,\dot \chi_2)$
		of the linearized system \eqref{linUPDE}-\eqref{linChiPDE}
		to the associated directions $h_1$ and $h_2$, respectively,
		we have the Lipschitz estimate
		\begin{align*}
			&\|\dot u_1-\dot u_2\|_{H^1(0,T;H^1)\cap W^{1,\infty}(0,T;L^2)}
				+\|\dot\chi_1-\dot\chi_2\|_{H^1(0,T;H^1)}
				\leq C\|h_1-h_2\|_{L^2(0,T;L^2(\Gamma;\R^n))}.
		\end{align*}
%		Then $\dot u_1=\dot u_2$ and $\dot \chi_1=\dot \chi_2$.
	\end{proposition}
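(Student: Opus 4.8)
The plan is to exploit linearity: since the linearized system \eqref{linUPDE}-\eqref{linChiPDE} is linear in $(\du,\dchi)$ and in the data $h$, the differences $(\dot u,\dot\chi):=(\dot u_1-\dot u_2,\dot\chi_1-\dot\chi_2)$ solve the same system with right-hand side direction $h:=h_1-h_2$ and zero initial data. It then suffices to establish the a priori estimate $\|\dot u\|_{H^1(0,T;H^1)\cap W^{1,\infty}(0,T;L^2)}+\|\dot\chi\|_{H^1(0,T;H^1)}\le C\|h\|_{L^2(0,T;L^2(\Gamma;\Rn))}$ for a solution with vanishing initial conditions. I would derive this by a standard energy method, testing the two equations with suitably chosen multipliers and using the fact that, by Theorem \ref{wellposedness}, the base state $(u,\chi)=\C S(b)$ is controlled in $\C Q$, so in particular $\e(u)\in L^\infty(0,T;H^1(\Omega;\Rnn))\hookrightarrow L^\infty(0,T;L^p)$ for all finite $p$ in 2D, $\chi_t\in L^2(0,T;H^2)$, and the coefficients $\mathbb C(\chi),\mathbb C'(\chi),\mathbb C''(\chi),\xi'(\chi_t),f''(\chi)$ are all bounded (by boundedness of $\mathsf c,\mathsf c',\mathsf c'',\xi',f''$ via \textbf{(A2)},\textbf{(B1)},\textbf{(B2)},\textbf{(B3)} and $H^2\hookrightarrow L^\infty$ in 2D).

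The key steps, in order: \emph{(1)} Test \eqref{linChiPDE} (in its differential, pointwise-in-time form) with $\dchi_t$. This produces $\|\nabla\dchi_t\|_{L^2}^2$, $\|\dchi_t\|_{L^2}^2$, a nonnegative term $\int_\Omega\xi'(\chi_t)|\dchi_t|^2\ge0$ (using $\xi'\ge0$ from monotonicity), and $\tfrac{d}{dt}\tfrac12\|\nabla\dchi\|_{L^2}^2$ on the left; on the right sit the coupling terms $-\tfrac12\int\mathbb C''(\chi)\dchi\,\e(u)\!:\!\e(u)\dchi_t$, $-\int\mathbb C'(\chi)\e(\du)\!:\!\e(u)\dchi_t$, and $-\int f''(\chi)\dchi\,\dchi_t$, which I bound by $\e\|\dchi_t\|_{L^2}^2$ plus $C_\e(\|\dchi\|_{L^2}^2+\|\e(\du)\|_{L^2}^2)$ using boundedness of the coefficients, $\e(u)\in L^4$, and Hölder/Young; the $\mathbb C'$ term needs $\|\dchi_t\|_{L^2}\|\e(\du)\|_{L^4}\|\e(u)\|_{L^4}$ handled carefully, or one may instead use $\|\e(\du)\|_{L^2}$ against the $L^\infty_t$ control of $\e(u)$ once one has the right regularity — this is the term to watch. \emph{(2)} Test \eqref{linUPDE} with $\du_t$: this gives $\tfrac{d}{dt}\big(\tfrac12\|\du_t\|_{L^2}^2+\tfrac12\int_\Omega\mathbb C(\chi)\e(\du)\!:\!\e(\du)\big)$ minus a term $\tfrac12\int_\Omega\mathbb C'(\chi)\chi_t\,\e(\du)\!:\!\e(\du)$ (from differentiating the $\chi$-dependent bilinear form in time), plus $\int_\Omega\mathbb C'(\chi)\dchi\,\e(u)\!:\!\e(\du_t)$ and the viscous term $\int_\Omega\mathbb D\e(\du_t)\!:\!\e(\du_t)\ge\mu\eta\|\e(\du_t)\|_{L^2}^2$, against the boundary forcing $\int_\Gamma h\cdot\du_t$. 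The $\mathbb C'\dchi\,\e(u)\!:\!\e(\du_t)$ term is absorbed into the viscous coercivity by Young, costing $C(\|\dchi\|_{L^4}^2\|\e(u)\|_{L^4}^2)\le C\|\nabla\dchi\|_{L^2}^2$ (using $H^1\hookrightarrow L^4$ in 2D and $\dchi(0)=0$ to control $\|\dchi\|_{L^2}$ by $\int_0^t\|\dchi_t\|_{L^2}$, hence by the step-(1) quantities), and the time-derivative-of-coefficient term is controlled since $\chi_t\in L^2(0,T;L^\infty)$. The boundary term $\int_\Gamma h\cdot\du_t$ is estimated by a trace inequality $\|\du_t\|_{L^2(\Gamma)}\le C\|\du_t\|_{H^1}^{1/2}\|\du_t\|_{L^2}^{1/2}$ (or $\le\e\|\du_t\|_{H^1}+C_\e\|\du_t\|_{L^2}$), the $H^1$-part absorbed by the viscous term via Korn's inequality and the $L^2$-part by Gronwall; this yields $\|h\|_{L^2(0,T;L^2(\Gamma))}^2$ on the right.

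\emph{(3)} Add the two estimates, choose $\e$ small so all bad terms are absorbed, integrate in $t$, use $\du(0)=\du_t(0)=\dchi(0)=0$ to kill all boundary-in-time contributions, and apply Gronwall's lemma (the cumulative memory terms $\int_0^t\|\nabla\dchi\|_{L^2}^2$, $\int_0^t\|\du_t\|_{L^2}^2$, etc., are exactly of Gronwall type). This delivers
$$\sup_{t\in[0,T]}\big(\|\du_t(t)\|_{L^2}^2+\|\e(\du)(t)\|_{L^2}^2+\|\nabla\dchi(t)\|_{L^2}^2\big)+\int_0^T\big(\|\e(\du_t)\|_{L^2}^2+\|\dchi_t\|_{H^1}^2\big)\dt\le C\|h\|_{L^2(0,T;L^2(\Gamma;\Rn))}^2.$$
Korn's and Poincaré-type inequalities then upgrade $\|\e(\du)\|$-control to full $H^1$-control of $\du$ (using $\du(0)=0$ to handle the rigid-motion kernel: $\|\du(t)\|_{H^1}\le\int_0^t\|\du_t\|_{H^1}$), and the $W^{1,\infty}(0,T;L^2)\cap H^1(0,T;H^1)$ norms of $\du$ and the $H^1(0,T;H^1)$ norm of $\dchi$ follow directly. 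Taking square roots gives the claimed Lipschitz estimate.

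The main obstacle I anticipate is step (1): controlling the cross term $\int_Q\mathbb C'(\chi)\e(\du)\!:\!\e(u)\,\dchi_t$ globally in time without losing the necessary structure — one must play the $L^4$-integrability of $\e(u)$ and $\e(\du)$ (available in 2D) against the $L^2$-norm of $\dchi_t$, or pair $\dchi_t$ in $L^2$ with $\e(\du)$ in $L^2$ and $\e(u)$ in $L^\infty_t L^2$, and in either case the resulting term feeding back into the $\du$-estimate must close the Gronwall loop rather than blow it open. This is precisely the kind of delicate bookkeeping that, as the authors note in the introduction, forces the restriction to the two-dimensional case.
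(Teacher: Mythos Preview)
Your proposal is correct and follows essentially the same approach as the paper: subtract, test \eqref{linUPDE}--\eqref{linChiPDE} with $(\du_t,\dchi_t)$, estimate the cross terms by H\"older/Young using the 2D embeddings, absorb the small pieces into the viscous and gradient terms, and close with Gronwall. Two minor remarks: (i) the paper treats $\int_\Omega\mathbb C(\chi)\e(\du)\!:\!\e(\du_t)$ directly by Young's inequality rather than writing it as a time derivative of a stored energy, which is slightly simpler and avoids invoking $\chi_t\in L^2(0,T;L^\infty)$; (ii) your bound ``$\e\|\dchi_t\|_{L^2}^2$'' for the $\mathbb C''$- and $\mathbb C'$-coupling terms should read $\e\|\dchi_t\|_{H^1}^2$, since $\e(u)$ is only in $L^4$ (not $L^\infty$) and you need $\dchi_t\in L^4\hookleftarrow H^1$ --- but this is harmless, as the left-hand side carries the full $H^1$-norm of $\dchi_t$.
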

	\begin{proof}
		Introducing the differences
		\begin{align*}
			\dbu=\dot u_1-\dot u_2\quad\text{and}\quad
			\dbchi=\dot \chi_1-\dot \chi_2\quad\text{and}\quad
			\dbb=h_1-h_2.
		\end{align*}
		and subtracting equations \eqref{linUPDE}-\eqref{linChiPDE}
		with the corresponding solutions from each other we obtain by 
		testing with $(\dbu_t,\dbchi_t)$
		\begin{align*}
			&\int_0^t\langle \dbu_{tt},\dbu_t\rangle_{H^1}\ds+\int_0^t\int_\Omega\mathbb C'(\chi)\dbchi\e(u):\e(\dbu_t)+\mathbb C(\chi)\e(\dbu):\e(\dbu_t)+\mathbb D\e(\dbu_t):\e(\dbu_t)\dxs\\
			&\quad=\int_0^t\int_\Gamma \dbb\cdot\dbu_t\dxs,\\
			&\int_0^t\int_\Omega \nabla\dbchi\cdot\nabla\dbchi_t+|\nabla\dbchi_t|^2+|\dbchi_t|^2+\xi'(\chi_t)|\dbchi_t|^2+\frac{1}{2}\mathbb C''(\chi)\e(u):\e(u)\dbchi\dbchi_t\dxs\\
			&\quad+\int_0^t\int_\Omega \mathbb C'(\chi)\e(\dbu):\e(u)\dbchi_t+f''(\chi)\dbchi\dbchi_t\dxs=0.
		\end{align*}
		We estimate the following terms occurring on the left-hand side of these equations by making use of Young's and H\"older's inequalities, standard Sobolev embeddings,
		the regularity of the state variables $(u,\chi)$ and the properties of the $\beta$-regularization:
		\begin{align*}
			\Big|\int_0^t\int_\Omega \mathbb C'(\chi)\dbchi\e(u):\e(\dbu_t)\dxt\Big|
				\leq{}& \|\mathbb C'(\chi)\|_{L^\infty(L^\infty)}\|\dbchi\|_{L^2(0,t;L^4)}\|\e(u)\|_{L^\infty(L^4)}\|\e(\dbu_t)\|_{L^2(0,t;L^2)}\\
				\leq{}& \delta\|\e(\dbu_t)\|_{L^2(0,t;L^2)}^2 + C_\delta\|\dbchi\|_{L^2(0,t;H^1)}^2,\\
			\Big|\int_0^t\int_\Omega \mathbb C(\chi)\e(\dbu):\e(\dbu_t)\dxs\Big|
				\leq{}&\delta\|\e(\dbu_t)\|_{L^2(0,t;L^2)}^2 + C_\delta\|\e(\dbu)\|_{L^2(0,t;L^2)}^2,\\
			\int_0^t\int_\Omega \xi'(\chi_t)|\dbchi_t|^2\dxs
				\geq{}&0,\\
			\Big|\int_0^t\int_\Omega \mathbb C''(\chi)\e(u):\e(u)\dbchi\dbchi_t\dxs\Big|
				\leq{}&\|\mathbb C''(\chi)\|_{L^\infty(L^\infty)}\|\e(u)\|_{L^\infty(L^4)}^2\|\dbchi\|_{L^2(0,t;L^4)}\|\dbchi_t\|_{L^2(0,t;L^4)}\\
				\leq{}&\delta\|\dbchi_t\|_{L^2(0,t;H^1)}^2+C_\delta\|\dbchi\|_{L^2(0,t;H^1)}^2,\\
			\Big|\int_0^t\int_\Omega \mathbb C'(\chi)\e(\dbu):\e(u)\dbchi_t\dxs\Big|
				\leq{}&\|\mathbb C'(\chi)\|_{L^\infty(L^\infty)}\|\e(u)\|_{L^\infty(L^4)}\|\e(\dbu)\|_{L^2(0,t;L^2)}\|\dbchi_t\|_{L^2(0,t;L^4)}\\
				\leq{}&\delta\|\dbchi_t\|_{L^2(0,t;H^1)}^2+C_\delta\|\e(\dbu)\|_{L^2(0,t;L^2)}^2,\\
			\Big|\int_0^t\int_\Omega f''(\chi)\dbchi\dbchi_t\dxs\Big|
				\leq{}&\|f''(\chi)\|_{L^\infty(L^\infty)}\|\dbchi\|_{L^2(0,t;L^2)}^2\|\dbchi_t\|_{L^2(0,t;L^2)}^2\\
				\leq{}&\delta\|\dbchi_t\|_{L^2(0,t;L^2)}^2+C_\delta\|\dbchi\|_{L^2(0,t;L^2)}^2.
		\end{align*}
		The right-hand side of the first equation is treated by Young's inequality and
		the trace theorem via
		\begin{align*}
			\int_0^t\int_\Gamma \dbb\cdot\dbu_t\dxs
				\leq \delta\|\dbu_t\|_{L^2(0,t;H^1(\Omega;\R^n))}^2
					+C_\delta\|\dbb\|_{L^2(0,t;L^2(\Gamma;\R^n))}^2.
		\end{align*}
		Moreover, by making use of the embedding $H^1(\Omega;\R^n)\hookrightarrow H^1(\Omega;\R^n)^*$
		given by $u\mapsto (u,\cdot)_{L^2}$, we find
		\begin{align*}
			\frac12\|\dbu_t(t)\|_{L^2}^2
				=\int_0^t\frac{d}{dt}\frac12\langle\dbu_t(s),\dbu_t(s)\rangle_{(H^1)^*\times H^1}\ds
				=\int_0^t\langle \dbu_{tt}(s),\dbu_t(s)\rangle_{(H^1)^*\times H^1}\ds.
		\end{align*}
		Applying these calculations and adding the equations above, we obtain
		\begin{align*}
			&\|\dbu_t(t)\|_{L^2}^2+\|\nabla\dbchi(t)\|_{L^2}^2+\|\e(\dbu_t)\|_{L^2(0,t;L^2)}^2
				+\|\dbchi_t\|_{L^2(0,t;H^1)}^2\\
				&\quad\leq\delta\Big(\|\dbu_t\|_{L^2(0,t;H^1)}^2+\|\dbchi_t\|_{L^2(0,t;H^1)}^2\Big)
					+C_\delta\Big(\|\dbu\|_{L^2(0,t;L^2)}^2+\|\dbchi\|_{L^2(0,t;H^1)}^2+\|\dbb\|_{L^2(0,t;L^2(\Gamma;\R^n))}^2\Big).
		\end{align*}
		Now, adding
		$\|\dbu_t\|_{L^2(0,t;L^2)}^2+\|\dbu\|_{L^2(0,t;H^1)}^2+\|\dbchi\|_{L^2(0,t;H^1)}^2$
		on both sides, applying Korn's inequality and choosing $\delta>0$ small enough, we find
		\begin{align*}
			&\|\dbu_t(t)\|_{L^2}^2+\|\nabla\dbchi(t)\|_{L^2}^2+\|\dbu\|_{H^1(0,t;H^1)}^2
				+\|\dbchi\|_{H^1 (0,t;H^1)}^2\\
				&\quad\leq C\Big(\|\dbu_t\|_{L^2(0,t;L^2)}^2+\|\dbu\|_{L^2(0,t;H^1)}^2+\|\dbchi\|_{L^2(0,t;H^1)}^2+\|\dbb\|_{L^2(0,t;L^2(\Gamma;\R^n))}^2\Big).
		\end{align*}
		This yields with the help of the estimates
		\begin{align*}
			&\|\dbu\|_{L^2(0,t;H^1)}^2\leq C\int_0^t\|\dbu_t\|_{L^2(0,s;H^1)}^2\ds,\\
			&\|\dbchi\|_{L^2(0,t;H^1)}^2\leq C\int_0^t\|\dbchi_t\|_{L^2(0,s;H^1)}^2\ds,\\
			&\|\dbchi(t)\|_{L^2}^2\leq C\|\dbchi_t\|_{L^2(0,t;L^2)}^2\ds,
		\end{align*}
		the following inequality
		\begin{align*}
			&\|\dbu_t(t)\|_{L^2}^2+\|\dbchi(t)\|_{H^1}^2+\|\dbu\|_{H^1(0,t;H^1)}^2
				+\|\dbchi\|_{H^1(0,t;H^1)}^2\\
				&\quad\leq C\|\dbb\|_{L^2(0,T;L^2(\Gamma;\R^n))}^2+C\int_0^t\Big(\|\dbu_t\|_{L^2}^2+\|\dbu_t\|_{L^2(0,s;H^1)}^2+\|\dbchi_t\|_{L^2(0,s;H^1)}^2+\|\dbchi\|_{H^1}^2\Big)\ds.
		\end{align*}
		Gronwall's lemma shows the claim.
		\ep
	\end{proof}

%\subsection{Strong differentiability of the solution operator}
	\begin{proposition}[Strong differentiability of the control-to-state mapping]
		\label{prop:strongDiff}
		Under\linebreak the assumptions of Proposition \ref{prop:weakDiff}
		the convergence \eqref{limitWeak} is even strong in $\dQ$.
		Moreover, the operator $\C S:\C B\to\dQ$ is G\^{a}teaux differentiable
		and we have $\langle D\C S(b),h\rangle=(\du,\dchi)$.
	\end{proposition}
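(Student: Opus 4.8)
The plan is to upgrade the weak-star convergence of Proposition~\ref{prop:weakDiff} to strong convergence in $\dQ$ via an energy estimate for the \emph{error} between the difference quotients and the linearized solution, and then to deduce G\^ateaux differentiability from strong convergence together with linearity and boundedness of the direction-to-linearized-solution map. Fix $b,h\in\C B$, set $(u,\chi):=\C S(b)$ and $(\lu,\lchi):=\C S(b+\lambda h)$, and let $(\du,\dchi)\in\dQ$ be the weak solution of \eqref{linUPDE}--\eqref{linChiPDE}, which is the unique weak-star limit of the difference quotients by Propositions~\ref{prop:weakDiff} and~\ref{prop:uniqueLin}. First I would introduce the errors $z^\lambda:=\lambda^{-1}(\lu-u)-\du\in\dU$ and $w^\lambda:=\lambda^{-1}(\lchi-\chi)-\dchi\in\dX$, which vanish at $t=0$; subtracting the linearized system from the corresponding difference quotient of the state system and splitting each nonlinearity by the mean value theorem (legitimate thanks to \textbf{(A2)}, \textbf{(A5)} and \textbf{(B1)}--\textbf{(B3)}), one gets for $(z^\lambda,w^\lambda)$ a system of the same structure as \eqref{linUPDE}--\eqref{linChiPDE}, with coefficients frozen at intermediate states and with extra right-hand side \emph{residuals}: e.g.\ the elastic nonlinearity contributes $\mathbb C(\lchi)\e(z^\lambda)$ together with the residual $(\mathbb C'(\chi+\vartheta^\lambda(\lchi-\chi))-\mathbb C'(\chi))\,\lambda^{-1}(\lchi-\chi)\,\e(u)+\mathbb C'(\chi)\,w^\lambda\,\e(u)+(\mathbb C(\lchi)-\mathbb C(\chi))\,\e(\du)$, and similarly for the terms coming from $\mathbb C'(\chi)\e(u):\e(u)$, $f'(\chi)$ and $\xi(\chi_t)$.

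Next I would test the $z^\lambda$-equation with $z^\lambda_t\in\aU$ and the $w^\lambda$-equation with $w^\lambda_t\in\aX$ and repeat, essentially verbatim, the Young--H\"older--Korn chain of estimates from the proof of Proposition~\ref{prop:uniqueLin}: the structural terms ($\mathbb C(\lchi)\e(z^\lambda)$, $\mathbb D\e(z^\lambda_t)$, the $w^\lambda$-gradient and $\|w^\lambda_t\|_{L^2}$ terms) are handled as there, the $z^\lambda$- and $w^\lambda$-dependent parts of the residuals are absorbed into the Gronwall right-hand side, and the remaining ``frozen'' parts of the residuals are collected into a quantity $\rho(\lambda)$. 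This should give, for a.e.\ $t\in(0,T)$, an inequality $\|z^\lambda_t(t)\|_{L^2}^2+\|w^\lambda(t)\|_{H^1}^2+\|z^\lambda\|_{H^1(0,t;H^1)}^2+\|w^\lambda\|_{H^1(0,t;H^1)}^2\le C\rho(\lambda)+C\int_0^t(\cdots)\ds$ with $(\cdots)$ of the same type as in Proposition~\ref{prop:uniqueLin}.

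The crux is to prove $\rho(\lambda)\to0$. Along a subsequence $\lambda\to0$ one has $\lchi\to\chi$ and $\lu\to u$ a.e.\ in $Q$ (Theorem~\ref{wellposedness}(iii)), while the difference quotients remain bounded in $\dQ$ by \eqref{linBound}; hence, using \textbf{(B1)}--\textbf{(B3)} and dominated convergence with fixed $L^2(Q)$-majorants built from \eqref{linBound} and the regularity of $(u,\chi)$, the residuals coming from $\mathbb C$, $\mathbb C'$, $\mathbb C''$ and $f'$ tend to $0$ in $L^2(Q)$. The hard part will be the residual stemming from $\xi(\chi_t)$: by the mean value theorem it equals $\xi'(\chi_t+\vartheta^\lambda(\lchi_t-\chi_t))\,w^\lambda_t+\big(\xi'(\chi_t+\vartheta^\lambda(\lchi_t-\chi_t))-\xi'(\chi_t)\big)\dchi_t$, so tested against $w^\lambda_t$ it produces $\int_Q\xi'(\chi_t+\vartheta^\lambda(\lchi_t-\chi_t))\,|w^\lambda_t|^2\ge0$ by monotonicity of $\xi$, which is simply discarded, plus a term bounded by $\delta\|w^\lambda_t\|_{L^2(Q)}^2+C_\delta\big\|\big(\xi'(\chi_t+\vartheta^\lambda(\lchi_t-\chi_t))-\xi'(\chi_t)\big)\dchi_t\big\|_{L^2(Q)}^2$, whose first summand is absorbed and whose second summand tends to $0$ since its integrand vanishes a.e.\ and is dominated by $2\|\xi'\|_{L^\infty}|\dchi_t|\in L^2(Q)$. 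I expect this to be the real obstacle, because the naive route (extracting a power of $\lambda$ from the Lipschitz continuity of $\xi'$) leaves a product of the two time derivatives $\lchi_t-\chi_t$ and $w^\lambda_t$ that no $2$-D embedding can absorb; isolating the sign-definite piece and arguing the remainder away by a.e.\ convergence along a subsequence, rather than by an explicit rate in $\lambda$, is what makes the estimate close. Gronwall's lemma then yields $\|z^\lambda\|_{W^{1,\infty}(0,T;L^2)\cap H^1(0,T;H^1)}+\|w^\lambda\|_{H^1(0,T;H^1)}\to0$, and the comparison argument in the Remark after Theorem~\ref{wellposedness}, applied to the error equation for $z^\lambda$, upgrades this to $\|z^\lambda\|_{H^2(0,T;(H^1)^*)}\to0$; since the limit $(\du,\dchi)$ is unique, the whole family converges, i.e.\ \eqref{limitWeak} holds strongly in $\dQ$.

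Finally I would note that $h\mapsto(\du,\dchi)$ is linear — the system \eqref{linUPDE}--\eqref{linChiPDE} is linear in $(\du,\dchi)$ and depends linearly on $h$, and its weak solution is unique by Proposition~\ref{prop:uniqueLin}, so superposition plus uniqueness force linearity, with $h=0$ giving $(\du,\dchi)=(0,0)$ — and bounded from $\C B$ to $\dQ$, since Proposition~\ref{prop:uniqueLin} with $h_2=0$, together with the comparison bound for the $H^2(0,T;(H^1)^*)$-component, gives $\|(\du,\dchi)\|_{\dQ}\le C\|h\|_{L^2(\Sigma)}\le C\|h\|_{\C B}$. As the quotient $\lambda^{-1}(\C S(b+\lambda h)-\C S(b))$ converges strongly in $\dQ$ to this bounded linear image of $h$ for every direction $h$, the map $\C S$ is G\^ateaux differentiable at $b$ with $\langle D\C S(b),h\rangle=(\du,\dchi)$; since $b\in\C B$ is arbitrary, this establishes the proposition.
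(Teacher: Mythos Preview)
Your proposal follows essentially the paper's route: form the error between the difference quotient and the linearized solution, test with its time derivative, and for the $\xi$-nonlinearity isolate and discard the sign-definite piece $\int\xi'(\mu)|w^\lambda_t|^2\ge0$, sending the leftover $(\xi'(\mu)-\xi'(\chi_t))\dchi_t$ to zero via dominated convergence, then close by Gronwall. The paper differs only in presentation: it works with the non-$\lambda$-divided errors $y^\lambda=\lu-u-\lambda\du$, $z^\lambda=\lchi-\chi-\lambda\dchi$ and uses second-order Taylor remainders (so residuals acquire a factor $|\lchi-\chi|^2$, giving explicit $O(\lambda)$ rates after the Lipschitz estimate) rather than your MVT+DCT scheme. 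One small imprecision in your write-up: not every residual sits in $L^2(Q)$ with a $\lambda$-independent majorant --- for instance $\mathbb C'(\chi)\e(\du){:}\e(\lu-u)$ only lives in $L^2(0,T;L^{4/3})$, and the paper invokes the 2D Gagliardo--Nirenberg inequality $\|w\|_{L^4}\le C\|w\|_{H^1}^{1/2}\|w\|_{L^2}^{1/2}$ to handle it; this is harmless since you pair it with $w^\lambda_t\in L^2(0,T;L^4)$, but the blanket ``$L^2(Q)$-majorant'' claim should be softened. Your closing paragraph (the comparison argument for the $H^2(0,T;(H^1)^*)$-component of $z^\lambda$ and the explicit check that $h\mapsto(\du,\dchi)$ is linear and bounded) spells out what the paper leaves implicit.
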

	\begin{proof}
		Let $\lambda\in\R$ and $b, b+\lambda\db\in\C B$ and define
		$$
			(u^\lambda,\chi^\lambda):=\C S(b+\lambda\db),\qquad
			(u,\chi):=\C S(b).
		$$
		Furthermore, let $(\du,\dchi)$ be the unique solution of the linearized system
		\eqref{linUPDE}-\eqref{linChiPDE} at $b$ in direction $\db$.
		We consider the following system arising from the calculations
		\begin{align*}
			\Big[\text{PDE system \eqref{eqn:elasticRegEq}-\eqref{eqn:initialRegEq} for $b+\lambda\db$}\Big]
				\;\;&-\;\;
				\Big[\text{PDE system \eqref{eqn:elasticRegEq}-\eqref{eqn:initialRegEq} for $b$}\Big]\\
			&-\lambda\times\Big[\text{PDE system \eqref{linUPDE}-\eqref{linChiPDE} for $(b,\db)$}\Big].
		\end{align*}
		By introducing the functions $(\ly,\lz)\in\dQ$ as
		$$
			\ly:=u^\lambda-u-\lambda\du\qquad
			\lz:=\chi^\lambda-\chi-\lambda\dchi,
		$$
		the resulting system can be written as
		\begin{align}
			&\left\{
			\begin{aligned}
				&\int_0^t \langle \ly_{tt},\ph\rangle_{H^1}\ds
					+\int_0^t\int_\Omega \bl\bbC(\lchi)-\bbC(\chi)-\lambda\dchi\bbC'(\chi)\br\e(u):\e(\ph)\dxs\\
				&\quad+\int_0^t\int_\Omega \bbC(\lchi)\e(\ly):\e(\ph)
					+\bbD\e(\ly_t):\e(\ph)\dxs=0,
			\end{aligned}
			\right.
			\label{strPDE1}\\
			&\left\{
			\begin{aligned}
				&\int_0^t\int_\Omega\nabla \lz\cdot\nabla\psi+\nabla\lz_t\cdot\nabla\psi+\lz_t\psi\dxs
					+\int_0^t\int_\Omega\bl\bxi(\lchi_t)-\bxi(\chi_t)-\lambda\dchi_t\bxi'(\chi_t)\br\psi\dxs\\
				&\quad+\int_0^t\int_\Omega\frac12\bl\bbC'(\lchi)\e(\lu):\e(\lu)-\bbC'(\chi)\e(\lu):\e(\lu)
					-\lambda\dchi\bbC''(\chi)\e(u):\e(u)\br\psi\dxs\\
				&\quad+\int_0^t\int_\Omega\frac12\bl\bbC'(\chi)\e(\lu):\e(\lu)-\bbC'(\chi)\e(u):\e(\lu)
					-\lambda\bbC'(\chi)\e(\du):\e(u)\br\psi\dxs\\
				&\quad+\int_0^t\int_\Omega\frac12\bbC'(\chi)\e(u):\e(\ly)\psi\dxs
					+\int_0^t\int_\Omega\bl f'(\lchi)-f'(\chi)-\lambda\dchi f''(\chi)\br\psi\dxs\\
				&\quad=0.
			\end{aligned}
			\right.
			\label{strPDE2}
		\end{align}
%		By convexity of $\bxi$ we also find
%		\begin{align*}
%			\bxi(\lchi_t)-\bxi(\chi_t)-\lambda\dchi_t\bxi'(\chi_t)
%				\geq{}& \bxi'(\chi_t)\lz_t.
%		\end{align*}
		Now, testing \eqref{strPDE1} with $\ph=\ly_t$
		and \eqref{strPDE2} with $\psi=\lz_t$
		and adding both equations, we find
		\begin{align*}
				&\int_0^t \langle \ly_{tt},\ly_t\rangle_{H^1}\ds
					+\int_0^t\int_\Omega\bbD\e(\ly_t):\e(\ly_t)
					+\nabla \lz\cdot\nabla\lz_t	+ |\nabla\lz_t|^2+|\lz_t|^2\dxs\\
				&\quad=
					\underbrace{-\int_0^t\int_\Omega \bl\bbC(\lchi)-\bbC(\chi)-\lambda\dchi\bbC'(\chi)\br\e(u):\e(\ly_t)\dxs}_{=:T_1},\\
					&\qquad\underbrace{-\int_0^t\int_\Omega\bbC(\lchi)\e(\ly):\e(\ly_t)\dxs}_{=:T_2}
					\underbrace{-\int_0^t\int_\Omega\bl\bxi(\lchi_t)-\bxi(\chi_t)-\lambda\dchi_t\bxi'(\chi_t)\br\lz_t\dxs}_{=:T_3}\\
					&\qquad\underbrace{-\int_0^t\int_\Omega\frac12\bl\bbC'(\lchi)\e(\lu):\e(\lu)-\bbC'(\chi)\e(\lu):\e(\lu)
					-\lambda\dchi\bbC''(\chi)\e(u):\e(u)\br\lz_t\dxs}_{=:T_4}\\
					&\qquad\underbrace{-\int_0^t\int_\Omega\frac12\bl\bbC'(\chi)\e(\lu):\e(\lu)-\bbC'(\chi)\e(u):\e(\lu)
					-\lambda\bbC'(\chi)\e(\du):\e(u)\br\lz_t\dxs}_{T_5}\\
					&\qquad\underbrace{-\int_0^t\int_\Omega\frac12\bl\bbC'(\chi)\e(u):\e(\lu)-\bbC'(\chi)\e(u):\e(u)
					-\lambda\bbC'(\chi)\e(\du):\e(u)\br\lz_t\dxs}_{T_6}\\
					&\qquad\underbrace{-\int_0^t\int_\Omega\frac12\bbC'(\chi)\e(u):\e(\ly)\lz_t\dxs}_{=:T_7}
					\underbrace{-\int_0^t\int_\Omega\bl f'(\lchi)-f'(\chi)-\lambda\dchi f''(\chi)\br\lz_t\dxs}_{=:T_8}.
		\end{align*}
		To proceed, we make use of the following estimates:
		By using Taylor's theorem and boundedness
		\begin{align}
		\label{chiBound}
			\|\chi\|_{L^\infty(Q)}+\|\lchi\|_{L^\infty(Q)}\leq L
		\end{align}
		with respect to $\lambda$ (applying Theorem \ref{wellposedness} (ii))
		as well as $\|\bxi'\|_{L^\infty}+\|\bxi''\|_{L^\infty}<+\infty$
		and assumptions (B1)-(B3),
		we obtain the following estimates
		\begin{align*}
				\Big|\bbC(\lchi)-\bbC(\chi)-\lambda\dchi\bbC'(\chi)\Big|
					\leq{}& \sup_{|x|<L}|\bbC'(x)||\lz|+C\sup_{|x|<L}|\bbC''(x)||\lchi-\chi|^2\\
					\leq{}& C(|\lz|+|\lchi-\chi|^2),\\
				\Big|\bbC'(\lchi)-\bbC'(\chi)-\lambda\dchi\bbC''(\chi)\Big|
					\leq{}& \sup_{|x|<L}|\bbC''(x)||\lz|+C\sup_{|x|<L}|\bbC'''(x)||\lchi-\chi|^2\\
					\leq{}& C(|\lz|+|\lchi-\chi|^2),\\
				\Big|\xi(\lchi)-\xi(\chi)-\lambda\dchi \xi'(\chi)\Big|
					\leq{}& C(|\lz|+|\lchi-\chi|^2),\\
				\Big|f'(\lchi)-f'(\chi)-\lambda\dchi f''(\chi)\Big|
					\leq{}& C(|\lz|+|\lchi-\chi|^2).
		\end{align*}
		The above estimates, the a priori estimates
		$$
			\|(u^\lambda,\chi^\lambda)\|_{\C Q}\leq C
		$$		
		(in particular\eqref{chiBound})
		and a special vector-valued version of the Gagliardo-Nirenberg inequality in 2D
		$$
			\|w\|_{L^4}\leq C\|w\|_{H^1}^{1/2}\|w\|_{L^2}^{1/2}\quad\text{for all }w\in H^1(\Omega;\R^m)
		$$		
		allow us to treat the terms $T_1$, $T_2$, $T_4$, $T_5$, $T_6$, $T_7$ and $T_8$ as follows
		\begin{align*}
			T_1\leq{}&C\int_0^t\int_\Omega|\lz||\e(u)||\e(\ly_t)|+|\lchi-\chi|^2|\e(u)||\e(\ly_t)|\dxs\\
				\leq{}&C\bl\|\lz\|_{L^2(0,t;L^3)}+\|\lchi-\chi\|_{L^2(0,t;L^6)}^2\br\|\e(u)\|_{L^\infty(L^6)}\|\e(\ly_t)\|_{L^2(0,t;L^2)}\\
				\leq{}&\delta\|\e(\ly_t)\|_{L^2(0,t;L^2)}^2+C_\delta\|\lz\|_{L^2(0,t;H^1)}^2+C_\delta\|\lchi-\chi\|_{L^2(0,t;H^1)}^4\\
			T_2\leq{}&\delta\|\e(\ly_t)\|_{L^2(0,t;L^2)}^2+C_\delta\|\e(\ly)\|_{L^2(0,t;L^2)}^2,\\
			T_4={}&-\int_0^t\int_\Omega\frac12\bl\bbC'(\lchi)-\bbC'(\chi)
					-\lambda\dchi\bbC''(\chi)\br\e(\lu):\e(\lu)\lz_t\dxs\\
					&-\int_0^t\int_\Omega\frac12\lambda\dchi\bbC''(\chi)\e(\lu+u):\e(\lu-u)\lz_t\dxs\\
				\leq{}&C\int_0^t\int_\Omega|\lz||\e(\lu)|^2|\lz_t|\dxs
					+C\int_0^t\int_\Omega|\lchi-\chi|^2|\e(\lu)|^2|\lz_t|\dxs\\
					&+|\lambda| C\int_0^t\int_\Omega|\dchi||\e(\lu+u)||\e(\lu-u)||\lz_t|\dxs\\
				\leq{}&\delta\|\lz_t\|_{L^2(0,t;H^1)}^2+C_\delta\|\e(\lu)\|_{L^\infty(L^4)}^2\|\lz\|_{L^2(0,t;H^1)}^2\\
					&+C\|\e(\lu)\|_{L^\infty(L^5)}^2\|\lchi-\chi\|_{L^\infty(L^5)}^2\|\lz_t\|_{L^1(0,t;L^5)}\\
					&+|\lambda| C\|\dchi\|_{L^\infty(L^6)}\|\e(\lu+u)\|_{L^\infty(L^6)}\|\e(\lu-u)\|_{L^2(L^2)}\|\lz_t\|_{L^2(0,t;L^6)}\\
				\leq{}&\delta\|\lz_t\|_{L^2(0,t;H^1)}^2
					+C_\delta\|\lz\|_{L^2(0,t;H^1)}^2
					+C_\delta\|\lchi-\chi\|_{H^1(H^1)}^4
					+|\lambda|^2 C_\delta\|\e(\lu-u)\|_{L^2(L^2)}^2,\\
			T_5={}&-\frac12\int_0^t\int_\Omega\bbC'(\chi)\e(\ly):\e(\lu)\lz_t\dxs
					-\frac\lambda2\int_0^t\int_\Omega\bbC'(\chi)\e(\du):\e(\lu-u)\lz_t\dxs\\
				\leq{}&C\|\e(\ly)\|_{L^2(0,t;L^2)}\|\e(\lu)\|_{L^\infty(L^4)}\|\lz_t\|_{L^2(0,t;L^4)}\\
					&+|\lambda| C\int_0^t\|\e(\du)\|_{L^2}\|\e(\lu-u)\|_{L^4}\|\lz_t\|_{L^4}\ds\quad\text{(apply Gagliardo-Nirenberg)}\\
				\leq{}&\delta\|\lz_t\|_{L^2(0,t;H^1)}^2+C_\delta\|\e(\ly)\|_{L^2(0,t;L^2)}^2\\
					&+|\lambda| C\|\e(\du)\|_{L^\infty(L^2)}\int_0^t\|\e(\lu-u)\|_{H^1}^{1/2}\|\e(\lu-u)\|_{L^2}^{1/2}\|\lz_t\|_{L^4}\ds\\
				\leq{}&\delta\|\lz_t\|_{L^2(0,t;H^1)}^2+C_\delta\|\e(\ly)\|_{L^2(0,t;L^2)}^2
					+|\lambda|^2 C_\delta\|\e(\lu-u)\|_{L^2(L^2)},\\
			T_6={}&-\frac12\int_0^t\int_\Omega\bbC'(\chi)\e(\ly):\e(u)\lz_t\dxs\\
				\leq{}&C\|\e(\ly)\|_{L^2(0,t;L^2)}\|\e(\lu)\|_{L^\infty(L^4)}\|\lz_t\|_{L^2(0,t;L^4)}\\
				\leq{}&\delta\|\lz_t\|_{L^2(0,t;H^1)}^2+C_\delta\|\e(\ly)\|_{L^2(0,t;L^2)}^2\\
			T_7\leq{}& \delta\|\lz_t\|_{L^2(0,t;H^1)}^2+C_\delta\|\e(\ly)\|_{L^2(0,t;L^2)}^2,\\
			T_8\leq{}&\delta\|\lz_t\|_{L^2(0,t;L^2)}^2+C_\delta\|\lchi-\chi\|_{H^1(H^1)}^4+C_\delta\|\lz\|_{L^2(0,t;L^2)}^2.
		\end{align*}
		Due to the low time-regularity of the damage variables, the term $T_3$ needs to
		be treated differently.
		To this end, we find by the mean value theorem with
		$$
			\mu\in[\min\{\lchi_t,\chi_t\}, \max\{\lchi_t,\chi_t\}] \text{ suitably choosen,}
		$$
		Young's inequality, $\bxi'\geq 0$
		and the monotonicity of $\bxi'$ that
		\begin{align*}
			-\bl\bxi(\lchi_t)-\bxi(\chi_t)-\lambda\dchi_t\bxi'(\chi_t)\br\lz_t
				={}&-\bl\bxi'(\mu)(\lchi_t-\chi_t)-\lambda\dchi_t\bxi'(\chi_t)\br\lz_t\\
				={}&-\bxi'(\mu)(\lz_t+\lambda\dchi_t)\lz_t+\lambda\dchi_t\bxi'(\chi_t)\lz_t\\
				\leq{}&-\bxi'(\mu)\lambda\dchi_t\lz_t+\lambda\dchi_t\bxi'(\chi_t)\lz_t\\
				={}&-(\bxi'(\mu)-\bxi'(\chi_t))\lambda\dchi_t\lz_t\\
				\leq{}&\delta|\lz_t|^2+|\lambda|^2C_\delta|\bxi'(\mu)-\bxi'(\chi_t)|^2|\dchi_t|^2\\
				\leq{}&\delta|\lz_t|^2+|\lambda|^2C_\delta|\bxi'(\lchi_t)-\bxi'(\chi_t)|^2|\dchi_t|^2.
		\end{align*}
		For further considerations we define $f_\lambda\in L^\infty(Q)$ by
		$$
			f_\lambda:=|\bxi'(\lchi_t)-\bxi'(\chi_t)|^2
		$$
		and thus obtain
		$$
			T_3\leq 
				\delta\|\lz_t\|_{L^2(0,t;L^2)}^2+|\lambda|^2C_\delta\int_0^t\int_\Omega f_\lambda |\dchi_t|^2\dxs.
		$$
		Note that due to continuity of the solution operator $\C S:\C B\to\dQ$
		by Theorem \ref{wellposedness} (iii)
		we find $\lchi_t\to\chi_t$ strongly in $L^2(Q)$ as $\lambda\to 0$.
		Taking also the boundedness and continuity of $\xi'$
		(see (A3) and (B2)) into account we observe that
		$f_\lambda\weakstarlim 0$ weakly-star in $L^\infty(Q)$ as $\lambda\to 0$
		and in particular
		\begin{align}
			\int_0^t\int_\Omega f_\lambda |\dchi_t|^2\dxs \to 0\quad\text{ as }\lambda\to 0.
				\label{fLambdaConv}
		\end{align}
		Applying all the estimates for $T_1,\ldots,T_8$ we obtain
		\begin{align*}
			&\|\ly_{t}(t)\|_{L^2}^2
				+\|\nabla\lz(t)\|_{L^2}^2
				+\|\e(\ly_t)\|_{L^2(0,t;L^2)}^2
				+\|\lz_t\|_{L^2(0,t;H^1)}^2\\
				&\leq
					\delta\|\lz_t\|_{L^2(0,t;H^1)}^2
					+\delta\|\e(\ly_t)\|_{L^2(0,t;L^2)}^2
					+C_\delta\|\lz\|_{L^2(0,t;H^1)}^2
					+C_\delta\|\e(\ly)\|_{L^2(0,t;L^2)}^2\\
					&\quad
					+|\lambda|^2C_\delta\int_0^t\int_\Omega f_\lambda |\dchi_t|^2\dxs
					+C_\delta\|\lchi-\chi\|_{H^1(H^1)}^4
					+|\lambda|^2 C_\delta\|\e(\lu-u)\|_{L^2(L^2)}^2\\
					&\quad
					+|\lambda|^2 C_\delta\|\e(\lu-u)\|_{L^2(L^2)}.
		\end{align*}
		Adding $\|\lz\|_{L^2(0,t;H^1)}^2+\|\ly\|_{L^2(0,t;H^1)}^2+\|\ly_t\|_{L^2(0,t;L^2)}^2$ on both sides,
		using the estimate
		\begin{align*}
			\frac12\|\lz_t\|_{L^2(0,t;L^2)}\geq{}&c\|\lz(t)\|_{L^2}^2
		\end{align*}
		on the left-hand side and the estimate
		\begin{align*}
			\|\ly\|_{L^2(0,t;H^1)}^2\leq{}& C\int_0^t\|\ly_t\|_{L^2(0,s;H^1)}^2\ds
		\end{align*}
		on the right-hand side,
		applying Korn's inequality and choosing $\delta>0$ small, we obtain
		\begin{align*}
			&\|\ly_{t}(t)\|_{L^2}^2
				+\|\lz(t)\|_{H^1}^2
				+\|\ly\|_{H^1(0,t;H^1)}^2
				+\|\lz\|_{H^1(0,t;H^1)}^2\\
				&\quad\leq
					C\int_0^t\|\lz\|_{H^1}^2
					+\|\ly_t\|_{L^2(0,s;H^1)}^2
					+\|\ly_t\|_{L^2}^2\ds\\
					&\qquad
					+C\|\lchi-\chi\|_{H^1(H^1)}^4
					+C|\lambda|^2 \|\lu-u\|_{L^2(H^1)}^2
					+C|\lambda|^2 \|\lu-u\|_{L^2(H^1)}\\
					&\qquad
					+C|\lambda|^2\int_Q f_\lambda |\dchi_t|^2\dxs.
		\end{align*}
		Gronwall's inequality yields
		\begin{align*}
			&\|\ly_{t}(t)\|_{L^2}^2
				+\|\lz(t)\|_{H^1}^2
				+\|\ly\|_{H^1(0,t;H^1)}^2
				+\|\lz\|_{H^1(0,t;H^1)}^2\\
				&\quad\leq
					C\Big(
					\|\lchi-\chi\|_{H^1(H^1)}^4
					+|\lambda|^2 \|\lu-u\|_{L^2(H^1)}^2
					+|\lambda|^2 \|\lu-u\|_{L^2(H^1)}
					+|\lambda|^2\int_Q f_\lambda |\dchi_t|^2\dxs\Big).
		\end{align*}
		Due to Lipschitz continuity of the solution operator on bounded subsets
		of $\C B$ as in Theorem \ref{wellposedness} (iii)
		(we consider a ball containing $b$ and $b+\lambda h$), we find
		\begin{align*}
			&\|\ly_{t}(t)\|_{L^2}^2
				+\|\lz(t)\|_{H^1}^2
				+\|\ly\|_{H^1(0,t;H^1)}^2
				+\|\lz\|_{H^1(0,t;H^1)}^2\\
				&\quad\leq
					C\Big(
					|\lambda|^4\|\db\|_{H^1(H^1)}^4
					+|\lambda|^4\|\db\|_{H^1(H^1)}^2
					+|\lambda|^3\|\db\|_{H^1(H^1)}
					+|\lambda|^2\int_Q f_\lambda |\dchi_t|^2\dxs\Big).
		\end{align*}
		Taking also \eqref{fLambdaConv} into account we end up with
		\begin{align*}
			\frac{\|\ly_{t}(t)\|_{L^2}
				+\|\lz(t)\|_{H^1}
				+\|\ly\|_{H^1(0,t;H^1)}
				+\|\lz\|_{H^1(0,t;H^1)}}{|\lambda|}
				\to 0
				\qquad\text{as }\lambda\to 0.
		\end{align*}
		\ep
	\end{proof}

	%Before we prove existence of weak solution to the adjoint system \eqref{adjPDE}
	%we will state the problem in a classical notion in order to
	%highlight the main features of the proof.
	%Via integration by parts we see that $(p,q)$ is a classical solution of
	%the adjoint system \eqref{adjPDE} if	
	%
	
\subsection{Adjoint state problem}
	Let us firstly give a short motivation for the derivation of the adjoint system
	and then continue with rigorous analysis:
	
	By utilizing the differentiability of the solution operator $\C S$
	obtained in Proposition \ref{prop:strongDiff} we find for the derivative of the cost-functional $\C J$ composed
	with the $\chi$-part of the solution operator $\C S:b\mapsto(u(b),\chi(b))$ via the chain rule
	\begin{align}
		\big\langle D_b\C J(\chi(b),b),h\big\rangle
		=\big\langle \partial_\chi\C J(\chi(b),b),\dot\chi[h]\big\rangle
			+\big\langle \partial_b\C J(\chi(b),b),h\big\rangle\qquad\text{for all }h\in\C B,
		\label{chainRule}
	\end{align}
	where $\partial_\chi$ and $\partial_b$ denote the partial derivatives
	with respect to the corresponding variables.
	Now, to rewrite the expression in terms of PDEs, the adjoint system is introduced as follows: 
%	since for every direction $h$ the
%	solution $\dot\chi$ of the linearized problem needs to be computed.
	%one introduces the adjoint system.
	Our goal is to find a pair of functions $(p,q)\in \aQ$ such that
	\begin{align}
		\big\langle \partial_\chi\C J(\chi(b),b),\dot\chi[h]\big\rangle
			=\big\langle (p,q),\C C(h)\big\rangle,
		\label{pqId}
	\end{align}
	where $\C C:\C B\to\aQ^*$ (note that $\aQ^*$ denotes the topological dual of $\aQ$)
	specifies the operator mapping the control variable
	$b$ to the right-hand side of \eqref{linUPDE}-\eqref{linChiPDE}.
	More precisely
	\begin{align}
		\big\langle (p,q),\C C(h)\big\rangle=\int_\Sigma p\cdot h\dxt,
		\label{pqId2}
	\end{align}
	i.e. the right-hand side of \eqref{linUPDE}-\eqref{linChiPDE} tested with $(p,q)$.
	We call $(p,q)$ the adjoint variables to the linearized solutions $(\du,\dchi)$
	at $(u,\chi)$. Even though the adjoint variable $q$ does not appear
	directly in \eqref{pqId} (by taking \eqref{pqId2} into account) it will be used for intermediate
	steps as shown below.
	
	In order to derive an explicit PDE system for $(p,q)$ (which will be justified rigorously afterward),
	we proceed formally and test \eqref{linUPDE}-\eqref{linChiPDE} with $(p,q)$.
	Then, adding both resulting equations yield
	\begin{align}
	\begin{split}
		&\int_0^T\langle \du_{tt},p\rangle_{H^1}\dt+\int_Q \mathbb C'(\chi)\dchi\e(u):\e(p)+\mathbb C(\chi)\e(\du):\e(p)+\mathbb D\e(\du_t):\e(p)\dxt\\
		&+\int_Q \nabla\dchi\cdot\nabla\psi+\nabla\dchi_t\cdot\nabla\psi+\dchi_t\psi+\xi'(\chi_t)\dchi_t\psi+\frac{1}{2}\mathbb C''(\chi)\dchi\e(u):\e(u)\psi\dxt\\
		&+\int_Q \mathbb C'(\chi)\e(\du):\e(u)\psi+f''(\chi)\dchi\psi\dxt\\
		&\quad=\big\langle (p,q),\C C(h)\big\rangle.
	\end{split}
	\label{linPQ}
	\end{align}
	Consequently the adjoint variables should satisfy for all ``appropriate'' test-functions $(\ph,\psi)$:
	\begin{align}
	\begin{split}
		&\int_0^T\langle \ph_{tt},p\rangle_{H^1}\dt+\int_Q \mathbb C'(\chi)\psi\e(u):\e(p)+\mathbb C(\chi)\e(\ph):\e(p)+\mathbb D\e(\ph_t):\e(p)\dxt\\
		&+\int_Q \nabla\psi\cdot\nabla q+\nabla\psi_t\cdot\nabla q+\psi_t q+\xi'(\chi_t)\psi_t q+\frac{1}{2}\mathbb C''(\chi)\psi\e(u):\e(u)q\dxt\\
		&+\int_Q \mathbb C'(\chi)\e(\ph):\e(u)q+f''(\chi)\psi q\dxt\\
		&\quad=\big\langle \partial_\chi\C J(\chi(b),b),\psi\big\rangle.
	\end{split}
	\label{adjointSum}
	\end{align}
	In this case we can recover \eqref{pqId} by using \eqref{adjointSum}
	tested with $(\ph,\psi)=(\du,\dchi)$ and using \eqref{linPQ}.
	Note that \eqref{adjointSum} can be equivalently recasted as the system
	\begin{align}
		&\int_0^T\langle p,\varphi_{tt}\rangle_{H^1}\dt+\int_{Q}\mathbb C(\chi)\e(p):\e(\varphi)+\mathbb C'(\chi)\e(u)q:\e(\varphi)+\mathbb D\e(p):\e(\varphi_t)\dxt=0,
		\label{adjP}\\
		&
		\begin{aligned}
			&\int_{Q}q\psi_t+\xi'(\chi_t)q\psi_t
				+\nabla q\cdot\nabla\psi+\nabla q\cdot\nabla\psi_t
				+\frac 12 \mathbb C''(\chi)\e(u):\e(u)q\psi\dxt\\
			&\quad+\int_{Q}\mathbb C'(\chi)\e(u):\e(p)\psi+f''(\chi)q\psi\dxt=\int_\Omega\lambda_T\big(\chi(T)-\chi_T\big)\psi(T)\dx.
		\end{aligned}
		\label{adjQ}
	\end{align}
	%In this Section we discuss a system which is formally the adjoint system to our optimal
	%control problem \textbf{(CP)}. We derive this adjoint system by utilizing the formal
	%lagrange method discussed intesively in the book of Troeltzsch. To this end, assume that
	%$b\in \C B_{adm}$ is a minimizer of \textbf{(CP)} with the corresponding state variable
	%$(u,\chi)=\C S(b)$.
	In the pointwise formulation this reads as follows:
	\begin{align*}
		&p_{tt}-\DIV(\mathbb C(\chi)\e(p)+\mathbb C'(\chi)\e(u)q-\mathbb D\e(p_t))=0
			&&\text{in }Q,\\
		&-q_t-(\bxi'(\chi_t)q)_t+\Delta q_t-\Delta q
			+\frac12\mathbb C''(\chi)\e(u):\e(u)q+\mathbb C'(\chi)\e(u):\e(p)+f''(\chi)q=0
			&&\text{in }Q,\\
		&(\mathbb C(\chi)\e(p)+\mathbb C'(\chi)\e(u)q-\mathbb D\e(p_t))\cdot\nu=0
			&&\text{on }\Sigma,\\
		&\nabla p\cdot\nu=0
			&&\text{on }\Sigma\\
		&\hspace*{-0.0em}\text{with the final-time conditions}\\
			\hspace*{0.0em}&p(T)=p_t(T)=0
			&&\text{in }\Omega,\\
		&-\Delta q(T)+q(T)+\bxi'(\chi_t(T))q(T)=\lambda_T(\chi(T)-\chi_T)\hspace*{10.6em}
			&&\text{in }\Omega,\\
		&\nabla q(T)\cdot\nu=0
			&&\text{on }\Gamma.\;\;
	\end{align*}
	The PDE system above is a backward in time boundary value problem for $(p,q)$,
	where $q$ itself fulfills an elliptic PDE at the final-time $T$.
%		As seen in the motivation above
%		the above system is the adjoint problem to the optimal control problem \textbf{(CP)}. 
%		This will be shown rigorously in the next section.
		Our task is now to prove existence of solutions in a weak sense.
%		This is one of the main tasks of this paper. 
	\begin{proposition}[Existence of very weak solution to the adjoint problem]
		\label{prop:adjointSystem}
		Suppose that the assumptions \textbf{(A1)}-\textbf{(A5)}, \textbf{(O1)}-\textbf{(O2)}
		and \textbf{(B1)}-\textbf{(B3)} are fulfilled.
		Furthermore let $(u,\chi)$ be a solution of the state system
		to $b\in\C B$, i.e. $(u,\chi):=\C S(b)$.
		Then there exists a pair of function $(p,q)\in\aQ$ (weak solution) such that
		\begin{align*}
			\text{\eqref{adjP}-\eqref{adjQ} holds for all test-functions }
			(\varphi,\psi)\in \dQ
			\text{ with }\varphi(0)=\varphi_t(0)=\psi(0)=0.
		\end{align*}
	\end{proposition}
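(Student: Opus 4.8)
The plan is to obtain $(p,q)$ as a limit of solutions of a suitably regularized problem. Since \eqref{adjP}--\eqref{adjQ} is backward in time, the first step is the change of variables $t\mapsto T-t$, which turns it into a \emph{forward} hyperbolic--parabolic system for $(\tilde p,\tilde q):=(p(T-\cdot),q(T-\cdot))$ with vanishing initial data $\tilde p(0)=\tilde p_t(0)=0$ and an elliptic initial condition for $\tilde q$ involving only $\chi(T)\in H^2(\Omega)$. All coefficients $\mathbb C(\chi),\mathbb C'(\chi),\mathbb C''(\chi),f''(\chi)$ lie in $L^\infty(Q)$, one has $\e(u)\in L^\infty(0,T;L^4(\Omega;\R^{2\times2}))$ by Theorem~\ref{wellposedness}, and $\xi'(\chi_t)\in L^\infty(Q)$ is nonnegative by \textbf{(A3)}. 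To make the pointwise equations meaningful — in particular the term $\partial_t(\xi'(\chi_t)q)=\xi''(\chi_t)\chi_{tt}q+\xi'(\chi_t)q_t$, which contains the uncontrolled $\chi_{tt}$ — I would first mollify $\chi$ in time, obtaining $\chi^\beta\to\chi$ in $H^1(0,T;H^2(\Omega))$ with $\chi^\beta_{tt}$ bounded in $L^2(Q)$ for each fixed $\beta>0$, and then discretize in space by a Faedo--Galerkin scheme (e.g.\ eigenfunctions of the Neumann Laplacian). For fixed $\beta$ and Galerkin level $m$ this reduces to a linear, backward-in-time ODE system which is uniquely solvable on $[0,T]$; call the approximate solutions $(p^\beta_m,q^\beta_m)$.

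The heart of the argument is a set of a priori estimates for $(p^\beta_m,q^\beta_m)$ that are uniform in $m$ and $\beta$ and bound them in $\aQ$. I would test the (time-reversed) $p$-equation with $\partial_t\tilde p^\beta_m$ and the $q$-equation with the time anti-derivative $\widehat{\tilde q}^\beta_m(t):=\int_0^t\tilde q^\beta_m(s)\ds$, which for each $t$ lies in the Galerkin space and satisfies $\widehat{\tilde q}^\beta_m(0)=0$, $\partial_t\widehat{\tilde q}^\beta_m=\tilde q^\beta_m$; this is the ``modified anti-derivative'' of \eqref{timeIntSol}. The crucial gain is that the difficult term then collapses, using $\xi'\ge 0$, to
\begin{align*}
\int_Q\xi'(\chi^\beta_t)\,\tilde q^\beta_m\,\partial_t\widehat{\tilde q}^\beta_m\dxt=\int_Q\xi'(\chi^\beta_t)\,|\tilde q^\beta_m|^2\dxt\ge 0,
\end{align*}
so monotonicity of $\xi$ does the work of the missing time regularity of $\chi_t$. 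The viscous term yields $L^2(0,T;H^1)$-coercivity of $\partial_t\tilde p^\beta_m$ via Korn's inequality, the terms $\int_Q|\nabla\tilde q^\beta_m|^2+|\tilde q^\beta_m|^2$ give $L^2(0,T;H^1)$-control of $\tilde q^\beta_m$, and the coupling terms are absorbed by Young's and H\"older's inequalities together with the 2D Gagliardo--Nirenberg estimate $\|w\|_{L^4}\le C\|w\|_{H^1}^{1/2}\|w\|_{L^2}^{1/2}$. The main obstacle will be to close these estimates \emph{globally} on $[0,T]$: the anti-derivative test functions produce contributions carrying positive powers of the time horizon — typically $\|\widehat{\tilde q}^\beta_m\|_{L^\infty(0,t;H^1)}\le\sqrt t\,\|\tilde q^\beta_m\|_{L^2(0,t;H^1)}$ and $\|\e(\tilde p^\beta_m)\|_{L^\infty(0,t;L^2)}\le\sqrt t\,\|\e(\partial_t\tilde p^\beta_m)\|_{L^2(0,t;L^2)}$ — which cannot be removed by a smallness choice once $T$ is fixed. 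I would run the estimate on the moving interval $[0,t]$, recast every non-absorbable contribution as $C\int_0^t\Phi(s)\ds$ with $\Phi$ a non-decreasing combination of the energy norms of $(\tilde p^\beta_m,\tilde q^\beta_m)$ over $[0,s]$ (this also absorbs the coefficient $\|\chi_t\|_{L^\infty(\Omega)}$, which is only $L^2$ in time), and then conclude with Gronwall's lemma in $t$, obtaining $\|(p^\beta_m,q^\beta_m)\|_{\aQ}\le C$ uniformly.

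Finally, along a subsequence $(p^\beta_m,q^\beta_m)\weakstarlim(p,q)$ in $\aQ$ as $m\to\infty$ and $\beta\to 0$, while $\chi^\beta\to\chi$ in $H^1(0,T;H^2(\Omega))$ forces $\xi'(\chi^\beta_t)\to\xi'(\chi_t)$ in $L^r(Q)$ for every $r<\infty$ (boundedness and continuity of $\xi'$, dominated convergence). Every term of \eqref{adjP}--\eqref{adjQ} is linear in the adjoint variables with a fixed, convergent coefficient, so the weak formulation passes to the limit against any fixed test pair; density of the Galerkin spaces then upgrades this to the full test class $(\varphi,\psi)\in\dQ$ with $\varphi(0)=\varphi_t(0)=\psi(0)=0$, and undoing the time reversal gives a very weak solution $(p,q)\in\aQ$ of the adjoint system. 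One verifies at the outset that, for such $(p,q)$ and $(\varphi,\psi)$, every integral in \eqref{adjP}--\eqref{adjQ} is finite — e.g.\ $\xi'(\chi_t)q\psi_t\in L^1(Q)$ since $\xi'(\chi_t)\in L^\infty(Q)$, $q\in L^2(Q)$ and $\psi_t\in L^2(Q)$, and $t\mapsto\langle p(t),\varphi_{tt}(t)\rangle_{H^1}$ is integrable since $p\in L^2(0,T;H^1)$ and $\varphi_{tt}\in L^2(0,T;(H^1)^*)$ — so that the assertion is meaningful.
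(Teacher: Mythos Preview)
Your overall architecture matches the paper's: time-reverse, regularize $\chi$ so that $\partial_t(\xi'(\chi_t)q)$ makes sense, build approximate solutions, obtain $\alpha$- (your $\beta$-) uniform bounds by testing with an anti-derivative so that the $\xi'$-term becomes $\int\xi'|q|^2\ge 0$, and pass to the limit. The use of a Galerkin scheme instead of the paper's implicit time discretization is a harmless variant; testing the $p$-equation with $\partial_t\tilde p$ rather than its anti-derivative $\hat p^t$ can also be made to work, though it puts a large constant in front of $\|q\|_{L^2(0,t;H^1)}^2$ in the $p$-estimate and you then have to rescale before adding the two inequalities.

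The genuine gap is your choice of anti-derivative for $q$. You set $\widehat{\tilde q}(s)=\int_0^s\tilde q(\tau)\,\mathrm d\tau$, which vanishes at $s=0$ and satisfies $\partial_s\widehat{\tilde q}=\tilde q$; this is \emph{not} the object defined in \eqref{timeIntSol}. The paper uses $\hat q^t(s)=\int_s^t q(\tau)\,\mathrm d\tau$, which vanishes at the \emph{upper} endpoint $s=t$ and satisfies $\partial_s\hat q^t=-q$. The distinction is decisive: with your $\widehat{\tilde q}$, integrating the (strong) Galerkin $q$-equation by parts in time over $(0,t)$ leaves boundary contributions at $s=t$ of the form $\int_\Omega(1+\xi')\,q(t)\,\widehat{\tilde q}(t)+\nabla q(t)\cdot\nabla\widehat{\tilde q}(t)\,\mathrm dx$, which require pointwise-in-time $H^1$ control of $q(t)$ --- precisely what you do not have uniformly in $\beta$ (the $\aQ$-energy only gives $q\in L^2(0,t;H^1)$). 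With the paper's $\hat q^t$ these boundary terms vanish, the only surviving boundary contribution sits at $s=0$ and is rewritten via the elliptic initial condition for $q(0)$ as $\int_\Omega\lambda_T(\chi(T)-\chi_T)\hat q^t(0)$, and the remaining terms $\|\hat q^t\|_{L^2(0,t;H^1)}^2$, $\|\hat p^t\|_{L^2(0,t;H^1)}^2$ are recast in Gronwall form by the trick in Lemma~\ref{lemma:parInt}: one differentiates $t\mapsto\|\hat q^t(0)\|_{H^1}^2$ and splits $\|\hat q^t(0)\|_{L^2(0,t;H^1)}^2$ into $\delta\|q\|_{L^2(0,t;H^1)}^2$ plus $C_\delta\int_0^t\|q\|_{L^2(0,s;H^1)}^2\,\mathrm ds$. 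Your remark about a coefficient $\|\chi_t\|_{L^\infty(\Omega)}$ entering the Gronwall loop is also off: $\chi_t$ appears only inside $\xi'(\chi_t)$, which is uniformly bounded by \textbf{(A3)}, so no such time-dependent weight arises.
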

	
	We prove Proposition \ref{prop:adjointSystem} in several steps
	whose intermediate results are highlighted by corresponding lemmas.
	The idea is the following:
	
	First, we will apply a time transformation and work with
	a regularized state variable $\chi_\alpha$ instead of $\chi$ with index $\alpha>0$.
	This will enable us to time-differentiate the nonlinear term $\bxi'(\chi_t)q$
	and obtain suitable a priori estimates.
	Existence of solutions for this regularized system
	will be achieved by utilizing a time-discretization scheme with time step size $\tau$
	and a limit analysis $\tau\downarrow 0$.
	In the time-discrete setting the regularized adjoint system is an elliptic problem
	which can be solved by standard methods.
	We then derive a priori estimates (energy estimates) uniformly in $\tau$ in order to pass to the limit $\tau\downarrow0$.
	After solving the regularized adjoint system, i.e. for $\alpha>0$,
	we transform it to the very weak formulation
	as used in \eqref{adjP}-\eqref{adjQ}, where time-derivatives only occur on the test-functions.
	Then, roughly speaking, we test the resulting system with certain modified anti-derivatives
	with respect to time
	of $p_\alpha$ and $q_\alpha$ and end up with
	a priori estimates uniformly in $\alpha$ in the large space $\aQ$.
	Finally the limit passage $\alpha\downarrow0$ can be performed in the regularized adjoint system.
	\vspace*{0.5em}\\\textit{\underline{Step 1:} setup time-transformation and $\alpha$-regularization}\vspace*{0.5em}\\
			In the first step we consider a transformation of the adjoint system above
			to an inital-boundary value problem
			by using the time transformation $t\mapsto T-t$.
			We find
			(we keep the notation $(p,q)$ for the transformed variables)
			\begin{align}
				&p_{tt}-\DIV(\mathbb C(\chi)\e(p)+\mathbb C'(\chi)\e(u)q+\mathbb D\e(p_t))=0
					&&\text{in }Q,\label{ADJmech}\\
				&q_t+(\bxi'(-\chi_t)q)_t-\Delta q_t-\Delta q
					+\frac12\mathbb C''(\chi)\e(u):\e(u)q\notag\\
				&\hspace*{12.3em}+\mathbb C'(\chi)\e(u):\e(p)+f''(\chi)q=0
					&&\text{in }Q,\label{ADJdamage}\\
				&(\mathbb C(\chi)\e(p)+\mathbb C'(\chi)\e(u)q+\mathbb D\e(p_t))\cdot\nu=0
					&&\text{on }\Sigma,\label{ADJbc1}\\
				&\nabla p\cdot\nu=0
					&&\text{on }\Sigma\label{ADJbc2}
			\end{align}
			with the initial conditions
			\begin{align}
				\hspace*{2.3em}&p(0)=p_t(0)=0
					&&\text{in }\Omega,\label{ADJic1}\\
				&-\Delta q(0)+q(0)+\bxi'(-\chi_t(0))q(0)=\lambda_T(\chi(T)-\chi_T)\hspace*{6.4em}
					&&\text{in }\Omega,\label{ADJic2}\\
				&\nabla q(0)\cdot\nu=0
					&&\text{on }\Gamma.\label{ADJic3}\;\;
			\end{align}
			Secondly, to obtain rigorous existence results,
			we will firstly work with a regularized version of the state variable $(u,\chi)$.
			To this end, let
			$\{\chi_\alpha\}\subseteq C^\infty(\ol Q)$ be a smooth approximation sequence such that
			$$
				\chi_\alpha\to \chi\text{ in } H^1(0,T;H^2(\Omega))\text{ as }\alpha\downarrow 0.
			$$
			The regularized system is obtained by replacing $\chi$ by its regularization $\chi_\alpha$
			in (\ref{ADJmech})-(\ref{ADJic3}).
			In the next two steps of the proof we will prove existence of solutions via a time-discretization
			argument. In the last step we will perform $\alpha\downarrow 0$.
		\vspace*{0.5em}\\\textit{\underline{Step 2:} setup time-discretization of the $\alpha$-regularized problem}\vspace*{0.5em}\\
			To keep the notation simple we omit the explicit dependence on $\alpha>0$ in this step. We consider the following time-discretization scheme:
			Let $\{0,\tau,2\tau,\ldots,T\}$ denote an equidistant partition of $[0,T]$
			with time step size $\tau:=T/M$ and $M\in\N$.
			Moreover, let denote the first and second difference operators by 
			$\Dt(p):=\frac{p^k-p^{k-1}}{\tau}$ and $\Dt(\Dt(p)):=\frac{p^k-2p^{k-1}+p^{k-2}}{\tau^2}$. For an arbitrary sequence $\{h^k\}_{k=0,\ldots,M}$ we define
			the piecewise constant and linear interpolation as
			\begin{align}\label{eqn:interpolation}
				&\displaystyle\ol{h}_\tau(t):=h^k,\, \ul{h}_\tau(t):=h^{k-1},\, \displaystyle h_\tau(t):=\frac{t-(k-1)\tau}{\tau}h^k+\frac{k\tau-t}{\tau}h^{k-1}\quad\text{for $t\in((k-1)\tau,k\tau]$.}
			\end{align}
			With these preparations the time-discretized version of the system in step 1 reads
			in a weak formulation as 
			\begin{align}
				&\text{find $\{(p^k,q^k)\}_{k=1,\ldots,M}\subseteq H^1(\Omega;\Rn)\times H^1(\Omega)$ such that}\notag\\
				&\int_\Omega \Dt(\Dt(p))\cdot\ph+\big(\CC(\chi^k)\e(\pk)+\CC'(\chi^k)\e(\uk)\qkk+\DD\e(\Dt(p))\big):\e(\ph)\dx=0
					\label{eqn:adjDiscrEq1}\\
				&\int_\Omega \Dt(q)\psi+\sA^k\Dt(q)\psi-\sB^k\qk\psi
					+\Dt(\nabla \qk)\cdot\nabla\psi+\nabla\qk\cdot\nabla\psi\notag\\
				&\qquad+\frac 12 \CC''(\chik)\e(\uk):\e(\uk)\qk\psi+\CC'(\chik)\e(\uk):\e(\pk)\psi+f''(\chik)\qk\psi\dx=0,
					\label{eqn:adjDiscrEq2}
			\end{align}
			for all $(\ph,\psi)\in H^1(\Omega;\Rn)\times H^1(\Omega)$,
			where $\{\sA^k\}_{k=0,\ldots,M}$ with $\sA^k\geq0$ and $\{\sB^k\}_{k=0,\ldots,M}$ are time-discretizations
			of $\xi'(-\partial_t\chi_\alpha)$ and
			$\xi''_\beta(-\partial_t\chi_\alpha)\partial_{tt}\chi_\alpha$
			respectively, such that
			\begin{align*}
				&\ol\sA\to \xi'(-\partial_t\chi_\alpha)
					&&\text{strongly in }L^\infty(Q),\\
				&\ol\sB\to \xi''_\beta(-\partial_t\chi_\alpha)\partial_{tt}\chi_\alpha
					&&\text{strongly in }L^\infty(Q)
			\end{align*}
			\begin{remark}
				Note that in (\ref{eqn:adjDiscrEq1})-(\ref{eqn:adjDiscrEq2}) we use a time-discretization
				in a form which allows for a decoupled system of linear elliptic equations. 
			\end{remark}
			We proceed recursively and construct $\pk,\qk$ from $\pkk,\pkkk,\qkk$.
			The initial values are given by
			$p^0=p^{-1}=0$ and $q^0$ is the weak solution of
			\begin{align}
				\int_\Omega \nabla q^0\cdot\nabla\ph+q^0\ph+\bxi'(-\partial_t\chi_\alpha(0))q^0\ph -\lambda_T\big(\chi(T)-\chi_T\big)\ph\dx=0
					\qquad\text{for all }\ph\in H^1(\Omega).
					\label{initialQ}
			\end{align}
			\begin{lemma}
			There exists $\{(p^k,q^k)\}_{k=1,\ldots,M}\subseteq H^1(\Omega;\Rn)\times H^1(\Omega)$, which fulfill (\ref{eqn:adjDiscrEq1})-(\ref{eqn:adjDiscrEq2}).
			\end{lemma}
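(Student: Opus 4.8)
The plan is to prove the lemma by induction on $k$, exploiting that, by the particular form of the scheme (see the Remark above), \eqref{eqn:adjDiscrEq1}--\eqref{eqn:adjDiscrEq2} decouples into a cascade of \emph{linear elliptic} problems: given $\pkk,\pkkk,\qkk$, equation \eqref{eqn:adjDiscrEq1} determines $\pk$, and then \eqref{eqn:adjDiscrEq2} — with $\pk$ now a fixed datum — determines $\qk$. To start the recursion one solves \eqref{initialQ} for $q^0$: the associated bilinear form $(w,\ph)\mapsto\int_\Omega\nabla w\cdot\nabla\ph+w\ph+\bxi'(-\partial_t\chi_\alpha(0))w\ph\dx$ is bounded on $H^1(\Omega)$ and, since $\bxi'\ge0$ by \textbf{(A3)}, coercive, so the Lax--Milgram lemma yields $q^0\in H^1(\Omega)$ (note $\chi(T)-\chi_T\in L^2(\Omega)$ by \textbf{(O2)}), while $p^0=p^{-1}=0$ are prescribed. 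Throughout one uses that the nodal data $\uk:=u(k\tau)\in H^2(\Omega;\Rn)$ and $\chik:=\chi_\alpha(k\tau)\in C^\infty(\ol\Omega)$ are well defined by $(u,\chi)\in\C Q$ and $\chi_\alpha\in C^\infty(\ol Q)$, together with the two-dimensional embeddings $H^2(\Omega)\subset W^{1,p}(\Omega)$ and $H^1(\Omega)\subset L^p(\Omega)$ for every $p<\infty$.

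For the inductive step I would rewrite \eqref{eqn:adjDiscrEq1} as $a_1(\pk,\ph)=F_1(\ph)$ for all $\ph\in H^1(\Omega;\Rn)$, collecting the $\pk$-dependent contributions into
\[
 a_1(w,\ph):=\int_\Omega\tfrac1{\tau^2}\,w\cdot\ph+\CC(\chik)\e(w):\e(\ph)+\tfrac1\tau\,\DD\e(w):\e(\ph)\dx .
\]
Its boundedness follows from \textbf{(A2)}, \textbf{(A4)}; its coercivity on $H^1(\Omega;\Rn)$ from $\sC(\chik)\ge0$, the positive definiteness \eqref{eqn:Ctensor} of $\mathbf C$, $\DD=\mu\mathbf C$ and Korn's inequality (giving $a_1(w,w)\ge\tfrac1{\tau^2}\|w\|_{L^2}^2+\tfrac{\mu\eta}{\tau}\|\e(w)\|_{L^2}^2$); and $F_1$ is a bounded functional because $\sC'$ is bounded and $\CC'(\chik)\e(\uk)\qkk\in L^2(\Omega)$ by the $2$D embeddings. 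Hence Lax--Milgram produces a unique $\pk\in H^1(\Omega;\Rn)$. With $\pk$ fixed I would treat \eqref{eqn:adjDiscrEq2} analogously, as $a_2(\qk,\psi)=F_2(\psi)$ for all $\psi\in H^1(\Omega)$ with
\[
 a_2(w,\psi):=\int_\Omega\tfrac1\tau(1+\sA^k)w\psi-\sB^k w\psi+\bl\tfrac1\tau+1\br\nabla w\cdot\nabla\psi+\tfrac12\CC''(\chik)\e(\uk):\e(\uk)\,w\psi+f''(\chik)\,w\psi\dx ,
\]
where $\sA^k,\sB^k,f''(\chik)\in L^\infty(\Omega)$, $\CC''(\chik)\e(\uk):\e(\uk)\in L^2(\Omega)$ and $\CC'(\chik)\e(\uk):\e(\pk)\in L^{4/3}(\Omega)$, and boundedness of $a_2$ and $F_2$ follows with the help of the $2$D inequality $\|w\|_{L^4}^2\le C\|w\|_{H^1}\|w\|_{L^2}$.

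The one genuinely delicate point — and the step I expect to be the main obstacle — is the coercivity of $a_2$: the terms $-\sB^k w\psi$, $\tfrac12\CC''(\chik)\e(\uk):\e(\uk)\,w\psi$ and $f''(\chik)\,w\psi$ are sign-indefinite, and the middle one carries a coefficient only in $L^2(\Omega)$, so it is not a classical lower-order perturbation. I would absorb it into the leading gradient term via the $2$D Gagliardo--Nirenberg inequality, arriving at $a_2(w,w)\ge\tfrac1\tau\|w\|_{H^1}^2-\delta\|w\|_{H^1}^2-C_{\delta,\alpha}\|w\|_{L^2}^2$ with $\delta>0$ free and $C_{\delta,\alpha}$ depending only on the (fixed) bounds of the lower-order coefficients; choosing $\delta$ and then $\tau$ small enough renders $a_2$ coercive on $H^1(\Omega)$, which is harmless since the lemma will only be used in the limit $\tau\downarrow0$ at fixed $\alpha$. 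Lax--Milgram then yields a unique $\qk\in H^1(\Omega)$, closing the induction; everything else is routine linear elliptic theory.
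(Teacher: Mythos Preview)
Your proposal is correct and follows essentially the same route as the paper: exploit the decoupled cascade structure of \eqref{eqn:adjDiscrEq1}--\eqref{eqn:adjDiscrEq2}, solve the first equation for $\pk$ by Lax--Milgram, and then the second for $\qk$ by linear elliptic theory. The paper's own proof is a two-line sketch that invokes ``standard theory of partial differential equations of second order'' for \eqref{eqn:adjDiscrEq2} without further comment, whereas you go further and explicitly isolate the only nontrivial point --- coercivity of $a_2$ in the presence of the sign-indefinite lower-order terms --- and resolve it by Gagliardo--Nirenberg absorption plus a smallness restriction on $\tau$; this is a legitimate (and arguably more transparent) way to justify what the paper leaves implicit.
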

			\begin{proof}
			Employing Lax-Milgram's theorem for given $\qkk\in H^1(\Omega)$ the equation (\ref{eqn:adjDiscrEq1}) admits a solution $p^k\in H^1(\Omega;\Rn)$. Moreover, by standard theory of partial differential equations 	of second order we obtain for given $p^k\in H^1(\Omega;\Rn)$ a solution $q^k\in H^1(\Omega)$ to the equation (\ref{eqn:adjDiscrEq2}). 
			\end{proof}
			%\vspace*{0.5em}\\
		\textit{\underline{Step 3:} a priori estimates for the $\alpha$-regularized time-discrete system and limit passage $\tau\downarrow0$}\vspace*{0.5em}\\
			After setting up the time-discrete scheme and existence, we will establish
			a priori estimates uniformly in $\tau$ in order to perform $\tau\downarrow 0$.
			\begin{lemma}
			There exists a constant $C>0$ (possibly depending on $\alpha$) independent of $\tau$ such that
			\begin{align*}
				\|(p_\tau,q_\tau)\|_{\dot{\C Q}}\leq C.
			\end{align*}
			\end{lemma}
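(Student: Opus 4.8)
The statement is a discrete energy estimate for the decoupled linear elliptic system \eqref{eqn:adjDiscrEq1}--\eqref{eqn:adjDiscrEq2}, and the plan is the classical one: test with the discrete time derivatives, sum up, and invoke a discrete Gronwall lemma. Concretely, I would test \eqref{eqn:adjDiscrEq1} with $\ph=\Dt(p)=(p^k-p^{k-1})/\tau$ and \eqref{eqn:adjDiscrEq2} with $\psi=\Dt(q)=(q^k-q^{k-1})/\tau$, add the two identities, sum over $k=1,\dots,m$ for arbitrary $m\le M$, and multiply by $\tau$. Throughout one uses the 2D embeddings $H^2(\Omega)\subset L^\infty(\Omega)$ and $H^1(\Omega)\subset L^r(\Omega)$ for all $r<\infty$, the boundedness of $\mathsf c,\mathsf c',\mathsf c'',f'',\xi',\xi''$ (from \textbf{(A2)}, \textbf{(A3)}, \textbf{(B1)}--\textbf{(B3)} together with $\|\chi_\alpha\|_{L^\infty(Q)}\le C$), and the regularity $(u,\chi)\in\C Q$ of the \emph{un-regularised} state, which in particular gives $u\in C([0,T];H^2(\Omega;\R^n))$ and hence $\|\e(u^k)\|_{L^4}\le C$ uniformly in $k,\tau,\alpha$; the smoothness of $\chi_\alpha$ enters only through $\alpha$-dependent bounds on $\Dt\chi_\alpha$ and on $\sB^k$.

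The terms with a good sign are produced by the elementary identity $a\cdot(a-b)=\tfrac12(|a|^2-|b|^2+|a-b|^2)$ (and its bilinear analogue applied to the positive semidefinite tensor $\CC(\chi^k)$ and to the coercive tensor $\DD$, using \eqref{eqn:Ctensor}): the inertial term yields $\tfrac12\|(p^m-p^{m-1})/\tau\|_{L^2}^2$; the viscous term dominates $c\,\tau\sum_{k\le m}\|\e(\Dt(p))\|_{L^2}^2$ after Korn's inequality; the elastic term contributes a nonnegative telescoping part plus a commutator defect of size $O(\tau\sum_k\|\e(p^{k-1})\|_{L^2}^2)$, controlled by the Lipschitz bound on $\mathsf c$ and the $L^\infty(Q)$-bound on $\Dt\chi_\alpha$; the terms $\int_\Omega(|\Dt(q)|^2+|\nabla\Dt(q)|^2)\dx$ give $\tau\sum_{k\le m}\|\Dt(q)\|_{H^1}^2$; and $\int_\Omega\nabla\qk\cdot\nabla\Dt(q)\dx$ telescopes to $\tfrac12\|\nabla q^m\|_{L^2}^2$. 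Crucially, $\int_\Omega\sA^k\,|\Dt(q)|^2\dx\ge0$ because $\sA^k\ge0$; this is exactly where the monotonicity built into $\xi'$ is used, and it is what makes the estimate possible without ever differentiating $\xi'(-\chi_t)q$ in time.

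All remaining terms are perturbations, handled by Hölder's and Young's inequalities so as to absorb a fixed small multiple $\delta$ of $\|\e(\Dt(p))\|_{L^2}^2$ and $\|\Dt(q)\|_{H^1}^2$ into the dissipative quantities on the left, the remainder being controlled by $\|\e(p^k)\|_{L^2}^2+\|q^k\|_{H^1}^2+\|\Dt(p)\|_{L^2}^2+\|q^0\|_{H^1}^2$ with $\alpha$-dependent constants. Two of them form the coupled pair, $\int_\Omega\CC'(\chik)\e(\uk)\qkk:\e(\Dt(p))\dx$ and $\int_\Omega\CC'(\chik)\e(\uk):\e(\pk)\,\Dt(q)\dx$. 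These can simply be estimated separately (Young, absorbing $\delta\|\e(\Dt(p))\|_{L^2}^2$, resp.\ $\delta\|\Dt(q)\|_{H^1}^2$, with remainders $C_\delta\|q^{k-1}\|_{H^1}^2$, resp.\ $C_\delta\|\e(p^k)\|_{L^2}^2$); alternatively — and more in the spirit of what is needed later — one first combines them via the discrete Leibniz rule into the telescoping term $\pm\int_\Omega\CC'(\chi^m)\e(u^m):\e(p^m)\,q^m\dx$ plus the commutator defect $\mp\tau\sum_k\int_\Omega\Dt(\CC'(\chi^k)\e(u^k)):\e(p^{k-1})\,q^{k-1}\dx$, the latter being controlled because $\Dt(\CC'(\chi_\alpha^k)\e(u^k))$ is bounded in the $\ell^2$-in-time $L^4(\Omega)$-norm uniformly in $\tau$ (using $\chi_\alpha\in C^\infty(\ol Q)$ and $u\in H^1(0,T;H^2(\Omega;\R^n))$), and the boundary term at $m$ absorbed through the viscous dissipation via $\e(p^m)=\tau\sum_{k\le m}\e(\Dt(p))$.

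Putting the pieces together — and observing that $p^0=p^{-1}=0$ while $q^0$ is bounded in $H^1(\Omega)$ (test \eqref{initialQ} with $q^0$, use $\xi'\ge0$ and $\chi(T)-\chi_T\in L^2(\Omega)$) — one obtains, for every $m\le M$,
\begin{align*}
	&\Big\|\tfrac{p^m-p^{m-1}}{\tau}\Big\|_{L^2}^2+\|\e(p^m)\|_{L^2}^2+\|q^m\|_{H^1}^2+\tau\sum_{k=1}^m\big(\|\e(\Dt(p))\|_{L^2}^2+\|\Dt(q)\|_{H^1}^2\big)\\
	&\qquad\le C_\alpha+C_\alpha\,\tau\sum_{k=1}^m\Big(\Big\|\tfrac{p^k-p^{k-1}}{\tau}\Big\|_{L^2}^2+\|\e(p^k)\|_{L^2}^2+\|q^k\|_{H^1}^2\Big),
\end{align*}
where Korn's inequality together with $\e(p^m)=\tau\sum_{k\le m}\e(\Dt(p))$ and $\|q^m\|_{L^2}\le\|q^0\|_{L^2}+\tau\sum_{k\le m}\|\Dt(q)\|_{L^2}$ have been used to reconstruct the $L^2$-parts. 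The discrete Gronwall lemma then bounds the left-hand side by a constant depending only on $\alpha$ and $T$; translating to the interpolants \eqref{eqn:interpolation} gives $p_\tau$ bounded in $H^1(0,T;H^1)\cap W^{1,\infty}(0,T;L^2)$ and $q_\tau$ bounded in $H^1(0,T;H^1)$. Finally $p_\tau\in H^2(0,T;H^1(\Omega;\R^n)^*)$ with a $\tau$-uniform bound follows a posteriori by solving \eqref{eqn:adjDiscrEq1} for $\Dt(\Dt(p))$ in $H^1(\Omega;\R^n)^*$ and using the bounds just obtained together with $\|\e(u^k)\,q^{k-1}\|_{(H^1)^*}\le\|\e(u^k)\|_{L^4}\|q^{k-1}\|_{L^4}$. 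The main obstacle is not any individual estimate but the combined task of (i) choosing Sobolev exponents in the numerous cross-terms so that precisely the available dissipation suffices to absorb them, and (ii) organising the bookkeeping — especially the telescoped elastic and coupling terms evaluated at the final index $m$, which involve the very norms $\|\e(p^m)\|_{L^2}$ and $\|q^m\|_{H^1}$ appearing on the left — so that the discrete Gronwall argument closes uniformly on the whole interval $[0,T]$.
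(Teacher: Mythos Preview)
Your proposal is correct and follows essentially the same approach as the paper: test the discrete system with the discrete time derivatives $\Dt(p)$ and $\Dt(q)$, exploit $\sA^k\ge 0$ to discard the $\xi'$-term, estimate the remaining cross terms by H\"older/Young (with the 2D embeddings and the uniform $H^2$-bound on $u$), apply Korn and a discrete Gronwall, and finally recover the $H^2(0,T;(H^1)^*)$-bound for $p_\tau$ by comparison in \eqref{eqn:adjDiscrEq1}. The paper in fact takes the simpler of your two options throughout---estimating the elastic term $T_1$ and the coupling terms $T_2,T_6$ directly via Young's inequality rather than via telescoping/Leibniz---but the overall strategy is the same.
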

			\begin{proof}
				By testing \eqref{eqn:adjDiscrEq1} with $\pk-\pkk$, testing \eqref{eqn:adjDiscrEq2} with
				$\qk-\qkk$ and summing over $k=1,\ldots,\ol t/\tau$, we obtain the estimates
				\begin{align}
					&\begin{aligned}
						&\frac 12\|\partial_t p(t)\|_{L^2}^2+c\|\e(\partial_t p)\|_{L^2(0,\olt;L^2)}^2\\
						&\qquad\leq \underbrace{-\int_0^\olt\int_\Omega\CC(\ol\chi)\e(\ol p):\e(\partial_t p)\dxs}_{=:T_1}
						\underbrace{-\int_0^\olt\int_\Omega\CC'(\ol\chi)\e(\ol u):\e(\partial_t p) \ul{q}\dxs}_{=:T_2}
					\end{aligned}
					\label{adjEst1befor1}\\
					&\begin{aligned}
						&\frac 12\|\nabla\ol q(t)\|_{L^2}^2+\|\partial_t q\|_{L^2(0,\olt;H^1)}^2\\
						&\qquad\leq \underbrace{-\int_0^\olt\int_\Omega\ol\sA|\partial_t q|^2\dxs}_{=:T_3}
						\underbrace{-\int_0^\olt\int_\Omega\ol\sB\ol q\,\partial_t q\dxs}_{=:T_4}
						\underbrace{-\int_0^\olt\int_\Omega\frac 12 \CC''(\ol\chi)\e(\ol u):\e(\ol u)\ol q\,\partial_t q\dxs}_{=:T_5}\\
						&\qquad\quad
						\underbrace{-\int_0^\olt\int_\Omega\CC'(\ol\chi)\e(\ol u):\e(\ol p)\,\partial_t q\dxs}_{=:T_6}
						\underbrace{-\int_0^\olt\int_\Omega f''(\ol\chi)\ol q\,\partial_t q\dxs}_{=:T_7}
					\end{aligned}
					\label{adjEst1befor2}
				\end{align}
				H\"older's and Young's inequality show
				\begin{align*}
					T_1\leq{}&\|\CC(\ol\chi)\|_{L^\infty(L^\infty)}\|\e(\ol p)\|_{L^2(0,\olt;L^2)}\|\e(\partial_t p)\|_{L^2(0,\olt;L^2)}\\
						&\delta\|\e(\partial_t p)\|_{L^2(0,\olt;L^2)}^2+C_\delta\|\e(\ol p)\|_{L^2(0,\olt;L^2)}^2,\\
					T_2\leq{}&\|\CC'(\ol\chi)\|_{L^\infty(L^\infty)}\|\e(\ol u)\|_{L^\infty(L^4)}\|\e(\partial_t p)\|_{L^2(0,\olt;L^2)}\|\ul q\|_{L^2(0,\olt;L^4)}\\
						&\delta\|\e(\partial_t p)\|^2_{L^2(0,\olt;L^2)}+C_\delta\|\ul q\|^2_{L^2(0,\olt;H^1)},\\
					T_3\leq{}&0,\\
					T_4\leq{}&\|\ol\sB\|_{L^\infty(L^\infty)}\|\ol q\|_{L^2(0,\olt;L^2)}\|\partial_t q\|_{L^2(0,\olt;L^2)}\\
						&\delta\|\partial_t q\|^2_{L^2(0,\olt;L^2)}+C_\delta\|\ol q\|^2_{L^2(0,\olt;L^2)},\\
					T_5\leq{}&\|\CC''(\ol\chi)\|_{L^\infty(L^\infty)}\|\e(\ol u)\|_{L^\infty(L^6)}^2\|\ol q\|_{L^2(0,\olt;L^6)}\|\partial_t q\|_{L^2(0,\olt;L^2)}\\
						&\delta\|\partial_t q\|^2_{L^2(0,\olt;L^2)}+C_\delta\|\ol q\|^2_{L^2(0,\olt;H^1)},\\
					T_6\leq{}&\|\CC'(\ol\chi)\|_{L^\infty(L^\infty)}\|\e(\ol u)\|_{L^\infty(L^4)}\|\e(\ol p)\|_{L^2(0,\olt;L^2)}\|\partial_t q\|_{L^2(0,\olt;L^4)}\\
						&\leq\delta\|\partial_t q\|^2_{L^2(0,\olt;H^1)}+C_\delta\|\e(\ol p)\|^2_{L^2(0,\olt;L^2)},\\
					T_7\leq{}&\|f''(\ol\chi)\|_{L^\infty(L^\infty)}\|\ol q\|_{L^2(0,\olt;L^2)}\|\partial_t q\|_{L^2(0,\olt;L^2)}\\
						&\delta\|\partial_t q\|^2_{L^2(0,\olt;L^2)}+C_\delta\|\ol q\|^2_{L^2(0,\olt;L^2)}.
				\end{align*}
				All in all we obtain by adding the inequalities in \eqref{adjEst1befor1}-\eqref{adjEst1befor2}, applying above estimates and readjusting the constants $(\delta,C_\delta)$
				\begin{align*}
						&\|\partial_t p(t)\|_{L^2}^2+\|\e(\partial_t p)\|_{L^2(0,\olt;L^2)}^2
							+\|\nabla\ol q(t)\|_{L^2}^2+\|\partial_t q\|_{L^2(0,\olt;H^1)}^2\\
						&\qquad\leq
							\delta\big(\|\e(\partial_t p)\|^2_{L^2(0,\olt;L^2)}+\|\partial_t q\|^2_{L^2(0,\olt;H^1)}\big)
							+C_\delta\big(\|\e(\ol p)\|^2_{L^2(0,\olt;L^2)}+\|\ul q\|^2_{L^2(0,\olt;H^1)}\big).
				\end{align*}
				Furthermore, observe that
				\begin{align*}
				&\|\ul q\|^2_{L^2(0,\olt;H^1)}\leq C\left(\|q(0)\|^2_{H^1}+\int_0^\olt\|\partial_t q\|^2_{L^2(0,s;H^1)}ds\right),\\
				&\|\e(\ol p)\|^2_{L^2(0,\olt;L^2)}\leq C\left(\|\e(\ol p(0))\|^2_{L^2}+\int_0^\olt\|\e(\ol p)\|^2_{L^2(0,s;L^2)}ds\right).
				\end{align*}
				By means of Korn's inequality and Gronwall's lemma we find the a priori estimates
				\begin{align*}
					\|p_\tau\|_{W^{1,\infty}(0,T;L^2)\cap H^1(0,T;H^1)}
							+\|q_\tau\|_{H^1(0,T;H^1)}\leq C.
				\end{align*}
				By comparison in \eqref{eqn:adjDiscrEq1} we also get
				\begin{align*}
					\|p_\tau\|_{H^2(0,T;(H^1)^*)}\leq C.
				\end{align*}
				\ep
			\end{proof}
			Now, extracting weakly convergent subsequences we may pass to the limit as $\tau\downarrow 0$
			in \eqref{eqn:adjDiscrEq1}-\eqref{eqn:adjDiscrEq2} and obtain $(p,q)\in \dQ$ fulfilling 
			\begin{align}
				&\langle \partial_{tt}p,\ph\rangle_{H^1}+\int_\Omega\big(\CC(\chi)\e(p)+\CC'(\chi)\e(u)q+\DD\e(\partial_t p)\big):\e(\ph)\dx=0
					\label{adjEq1}\\
				&\begin{aligned}
					&\int_\Omega (\partial_t q)\psi+\xi'(-\partial_t\chi_\alpha)(\partial_t q)\psi-\xi''_\beta(-\partial_t\chi_\alpha)(\partial_{tt}\chi_\alpha) q\psi
					+\nabla \partial_t q\cdot\nabla\psi+\nabla q\cdot\nabla\psi\\
					&\qquad+\frac 12 \CC''(\chi)\e(u):\e(u)q\psi+\CC'(\chi)\e(u):\e(p)\psi+f''(\chi)q\psi\dx=0
				\end{aligned}
					\label{adjEq2}
			\end{align}
			for a.e. $t\in(0,T)$, all $(\ph,\psi)\in H^1(\Omega;\Rn)\times H^1(\Omega)$ and with the initial conditions $p(0)=p_t(0)=0$ and $q(0)=q^0$ satisfying
			\eqref{initialQ}.
			\vspace*{0.5em}\\\hspace*{-0.27em}\textit{\underline{Step 4:} a priori estimates for the $\alpha$-regularized time-continuous system and limit passage $\alpha\downarrow0$}\vspace*{0.5em}\\
			In this step we are going to derive certain weak a priori estimates uniformly in $\alpha$.
			In preparation of the corresponding result we prove a technical Lemma. 
			\begin{lemma}
			\label{lemma:parInt}
			Let $s,t\in[0,T]$ be given and define
			\begin{align}
				&\hat p_\alpha^t(s):=
				\begin{cases}
					0&\text{if }s\in[t,T],\\
					\int_{s}^{t} p_\alpha(\tau)\mathrm d\tau&\text{if } s\in[0,t),
				\end{cases}
				&&\hat q_\alpha^t(s):=
				\begin{cases}
					0&\text{if }s\in[t,T],\\
					\int_{s}^{t} q_\alpha(\tau)\mathrm d\tau&\text{if } s\in[0,t).
				\end{cases}
			\label{timeIntSol}
			\end{align}
			Then, it holds
				\begin{align*}
					\frac{\mathrm d}{\mathrm dt}\Big(\|\hat q_\alpha^t(0)\|_{H^1}^2\Big)
						=2\int_\Omega\Big(\hat q_\alpha^t(0) q_\alpha(t)+\nabla\hat q_\alpha^t(0)\cdot\nabla q_\alpha(t)\Big)\dx.
				\end{align*}
				Similarly for $\hat p_\alpha^t$.
			\end{lemma}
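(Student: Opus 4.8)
The plan is to observe that $\hat q_\alpha^t(0)$ is simply the $H^1(\Omega)$-valued time-antiderivative $\int_0^t q_\alpha(s)\ds$ of $q_\alpha$, so the asserted formula is just the chain rule for the composition of the curve $t\mapsto\hat q_\alpha^t(0)$ with the squared norm of the Hilbert space $H^1(\Omega)$. First I would record the regularity gained in Step~3: since $(p,q)=(p_\alpha,q_\alpha)\in\dQ$, we have $q_\alpha\in\dX=H^1(0,T;H^1(\Omega))\hookrightarrow C([0,T];H^1(\Omega))$; in particular $s\mapsto q_\alpha(s)$ is strongly continuous into $H^1(\Omega)$ and lies in $L^1(0,T;H^1(\Omega))$. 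Hence, for $t\in(0,T)$, $\hat q_\alpha^t(0)=\int_0^t q_\alpha(s)\ds$ is a well-defined $H^1(\Omega)$-valued Bochner integral, and by the fundamental theorem of calculus for Bochner integrals of strongly continuous integrands the map $Q_\alpha\colon t\mapsto\hat q_\alpha^t(0)$ belongs to $C^1([0,T];H^1(\Omega))$ with $Q_\alpha'(t)=q_\alpha(t)$ for every $t$. (If one only wishes to invoke $q_\alpha\in L^1(0,T;H^1(\Omega))$, the same holds for a.e.\ $t$ with $Q_\alpha\in W^{1,1}(0,T;H^1(\Omega))$, which already suffices for the later use of the lemma.)

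Next I would use that on the Hilbert space $H:=H^1(\Omega)$ the functional $N(v):=\|v\|_{H^1}^2=\int_\Omega|v|^2+|\nabla v|^2\dx$ is (continuously) Fr\'echet differentiable with $N'(v)[w]=2(v,w)_{H^1}=2\int_\Omega vw+\nabla v\cdot\nabla w\dx$. Composing $N$ with $Q_\alpha$ and applying the chain rule for a $C^1$ real functional along a $C^1$ Hilbert-space curve yields
\begin{align*}
	\frac{\mathrm d}{\mathrm dt}\Big(\|\hat q_\alpha^t(0)\|_{H^1}^2\Big)
	&=N'(Q_\alpha(t))[Q_\alpha'(t)]=2\big(\hat q_\alpha^t(0),q_\alpha(t)\big)_{H^1}\\
	&=2\int_\Omega\hat q_\alpha^t(0)\,q_\alpha(t)+\nabla\hat q_\alpha^t(0)\cdot\nabla q_\alpha(t)\dx,
\end{align*}
which is exactly the claimed identity. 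The statement for $\hat p_\alpha^t$ follows verbatim, replacing $q_\alpha\in\dX$ by $p_\alpha\in\dU\hookrightarrow H^1(0,T;H^1(\Omega;\Rn))$ and the scalar products by their vector-valued analogues.

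There is no genuine obstacle in this lemma; the only point requiring a (standard) justification is the differentiation of the Bochner antiderivative together with the Hilbert-space chain rule, both of which rest on the $H^1(\Omega)$-in-space regularity of $q_\alpha$ (resp.\ $p_\alpha$) furnished by Step~3. If one prefers not to quote Fr\'echet differentiability of the norm, one can argue directly through the difference quotient $\frac1h\big(\|Q_\alpha(t+h)\|_{H^1}^2-\|Q_\alpha(t)\|_{H^1}^2\big)=\big(Q_\alpha(t+h)+Q_\alpha(t),\tfrac1h(Q_\alpha(t+h)-Q_\alpha(t))\big)_{H^1}$ and pass to the limit $h\to0$, using $\tfrac1h(Q_\alpha(t+h)-Q_\alpha(t))\to q_\alpha(t)$ and $Q_\alpha(t+h)\to Q_\alpha(t)$ in $H^1(\Omega)$ together with continuity of the inner product.
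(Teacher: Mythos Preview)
Your argument is correct, and it is slightly different in spirit from the paper's. The paper proves the identity by writing $\|\hat q_\alpha^t(0)\|_{H^1}^2=\int_\Omega f(x,t)\dx$ with $f(x,t)=|\hat q_\alpha^t(x,0)|^2+|\nabla\hat q_\alpha^t(x,0)|^2$ and then invoking the classical theorem on differentiation of parameter-dependent integrals: it checks that $f(\cdot,t)\in L^1(\Omega)$, computes $\partial_t f(x,t)$ pointwise, and verifies an integrable dominating bound using $q_\alpha\in H^1(0,T;H^1(\Omega))$. You instead stay at the Hilbert-space level, recognizing $\hat q_\alpha^t(0)=\int_0^t q_\alpha(s)\ds$ as a Bochner integral in $H^1(\Omega)$, applying the fundamental theorem of calculus for Bochner integrals, and then the chain rule for the squared $H^1$-norm. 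Your route is a bit more conceptual and avoids the pointwise-in-$x$ verification of the Leibniz hypotheses; the paper's route is more hands-on but equally valid. Both rely on the same regularity input $q_\alpha\in H^1(0,T;H^1(\Omega))$ (and analogously for $p_\alpha$) obtained in Step~3.
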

			\begin{proof}
						In order to differentiate the parametrized integral 
						$$
							t\mapsto\int_\Omega f(x,t)\dx\;\text{ with }\;
							f(x,t):=|\hat q_\alpha^t(x,0)|^2+|\nabla\hat q_\alpha^t(x,0)|^2
						$$
						we apply \cite[5.7 Satz -- Zusatz (Differentiation unter dem Integralzeichen)]{El09} and
						check the following properties by noticing that $q_\alpha\in H^1(0,T;H^1(\Omega))$:
						\begin{itemize}
							\item
								For every $t\in[0,T]$ the function $f(\cdot,t)$ is in $L^1(\Omega)$.
							\item
								For a.e. $x\in\Omega$ and $t\in(0,T)$
								the function $\partial_t f(x,t)$ is differentiable with respect to $t$
								and for the derivative we obtain
								\begin{align*}
									\partial_s f(x,t)
									=2\hat q_\alpha^t(x,0)q_\alpha(x,t)+2\nabla \hat q_\alpha^t(x,0)\cdot \nabla q_\alpha(x,t).
								\end{align*}
							\item
								Boundedness of the partial derivative:
								\begin{align*}
									|\partial_t f(x,t)|
										\leq{}& 2\|q_\alpha(x,\cdot)\|_{L^1(0,T)} \|q_\alpha(x,\cdot)\|_{L^\infty(0,T)}
											+2\|\nabla q_\alpha(x,\cdot)\|_{L^1(0,T)} \|\nabla q_\alpha(x,\cdot)\|_{L^\infty(0,T)}\\
										\leq{}& C\|q_\alpha(x,\cdot)\|_{L^1(0,T)} \|q_\alpha(x,\cdot)\|_{H^1(0,T)}
											+C\|\nabla q_\alpha(x,\cdot)\|_{L^1(0,T)} \|\nabla q_\alpha(x,\cdot)\|_{H^1(0,T)}.
								\end{align*}
%								Here we used the identity
%								$\nabla \hat q_\alpha^t(0) = \int_0^t\nabla q_\alpha\ds$
%								which follows from definition of the weak derivative. 
						\end{itemize}
				\ep
			\end{proof}
			
			\begin{lemma}
			\label{lemmma:adjPQ}
				There exists a constant $C>0$ independent of $\alpha$ such that
				\begin{align*}
					\|(p_\alpha,q_\alpha)\|_{\aQ}\leq C.
				\end{align*}
			\end{lemma}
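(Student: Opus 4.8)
The goal is an $\alpha$-independent bound for $(p_\alpha,q_\alpha)$ in the large space $\aQ=\big(L^2(0,T;H^1(\Omega;\Rn))\cap L^\infty(0,T;L^2(\Omega;\Rn))\big)\times L^2(0,T;H^1(\Omega))$. Since the $\alpha$-regularized system \eqref{adjEq1}-\eqref{adjEq2} has been written with time-derivatives on $p_\alpha,q_\alpha$, we cannot test with $\partial_t p_\alpha,\partial_t q_\alpha$ uniformly in $\alpha$ (the term $\xi''_\beta(-\partial_t\chi_\alpha)\partial_{tt}\chi_\alpha\,q_\alpha$ blows up as $\alpha\downarrow0$, and the regularity of $p_\alpha,q_\alpha$ itself is only $\alpha$-dependent). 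The device is therefore to test the system with the backward anti-derivatives $\hat p_\alpha^t,\hat q_\alpha^t$ introduced in \eqref{timeIntSol}: these satisfy $\partial_s\hat p_\alpha^t=-p_\alpha$, $\partial_s\hat q_\alpha^t=-q_\alpha$ on $[0,t)$, so that every integration in $s$ against $p_\alpha,q_\alpha$ can be shifted onto $\hat p_\alpha^t,\hat q_\alpha^t$ and turned into a coercive quantity via Lemma~\ref{lemma:parInt}. First I would fix $t\in(0,T]$, insert $(\varphi,\psi)=(\hat p_\alpha^t,\hat q_\alpha^t)$ (which are admissible since they vanish on $[t,T]$ and lie in $H^1(0,T;\cdot)$), add the two equations, and integrate over $s\in(0,t)$.

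The leading terms should reorganize as follows. In \eqref{adjEq1} the viscous term $\int_0^t\!\int_\Omega \DD\e(\partial_s p_\alpha):\e(\hat p_\alpha^t)$ integrates by parts in $s$ (using $\hat p_\alpha^t(t)=0$ and $\partial_s\hat p_\alpha^t=-p_\alpha$) to produce $+\int_0^t\!\int_\Omega\DD\e(p_\alpha):\e(p_\alpha)\geq c\|\e(p_\alpha)\|_{L^2(0,t;L^2)}^2$ after Korn, minus a boundary contribution at $s=0$ of the form $\int_\Omega\DD\e(p_\alpha(0)):\e(\hat p_\alpha^t(0))$ which vanishes because $p_\alpha(0)=0$; the inertial term $\langle\partial_{ss}p_\alpha,\hat p_\alpha^t\rangle$ integrates by parts twice (using $p_\alpha(0)=\partial_s p_\alpha(0)=0$, $\hat p_\alpha^t(t)=0$) to $-\int_0^t\!\int_\Omega |\partial_s p_\alpha|^2\cdots$ — actually it yields $+\frac12\|p_\alpha(?)\|$-type terms; more carefully, $\int_0^t\langle\partial_{ss}p_\alpha,\hat p_\alpha^t\rangle\,ds = -\int_0^t\langle\partial_s p_\alpha,\partial_s\hat p_\alpha^t\rangle\,ds = \int_0^t\|\partial_s p_\alpha\|$? no — I would instead pair the inertial term with a second antiderivative, or equivalently note $\int_0^t\langle\partial_{ss}p_\alpha,\hat p_\alpha^t\rangle = \int_0^t\langle\partial_s p_\alpha, p_\alpha\rangle = \frac12\|p_\alpha(t)\|_{L^2}^2$, giving control of $\|p_\alpha\|_{L^\infty(0,T;L^2)}$. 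In \eqref{adjEq2} the terms $\int q_\alpha\psi_s+\nabla q_\alpha\cdot\nabla\psi_s$ with $\psi=\hat q_\alpha^t$ give (after shifting $\partial_s$) $\frac12\|q_\alpha(t)\|_{L^2}^2+\frac12\|\nabla q_\alpha\|$-type and, crucially, the elliptic term $\int_0^t\!\int_\Omega\nabla q_\alpha\cdot\nabla\hat q_\alpha^t$ is, by Lemma~\ref{lemma:parInt}, $\tfrac12\tfrac{d}{dt}\|\hat q_\alpha^t(0)\|_{H^1}^2$-compatible and produces $+\|q_\alpha\|_{L^2(0,t;H^1)}^2$-coercivity. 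The nonconservative term $\int_0^t\!\int_\Omega\xi'(-\partial_s\chi_\alpha)q_\alpha\,\partial_s\hat q_\alpha^t\,ds = -\int_0^t\!\int_\Omega\xi'(-\partial_s\chi_\alpha)|q_\alpha|^2$; here the sign is \emph{favorable} on the left-hand side only after moving it to the right — since $\xi'\geq0$, $-\int\xi'|q_\alpha|^2\leq 0$, so it helps. The genuinely dangerous term $\int_0^t\!\int_\Omega\xi''_\beta(-\partial_s\chi_\alpha)(\partial_{ss}\chi_\alpha)\,q_\alpha\,\hat q_\alpha^t$ is handled by $\|\xi''_\beta\|_\infty$ bounded (assumption (B2)), $\partial_{ss}\chi_\alpha$ bounded in $L^2(0,T;L^2)$ \emph{uniformly in $\alpha$} because $\chi_\alpha\to\chi$ in $H^1(0,T;H^2(\Omega))$ hence $\chi_{\alpha,tt}$... wait, $\chi\in H^1(0,T;H^2)$ only gives $\chi_t\in L^2(0,T;H^2)$, so $\partial_{ss}\chi_\alpha$ need \emph{not} be uniformly bounded — this is exactly why the $\beta$-regularization of $\xi''$ and the specific time-discretization were introduced; one absorbs this via $|\xi''_\beta|\leq C_\beta$ and keeps $\beta$ fixed, so the constant $C$ in the lemma may depend on $\beta$ but not on $\alpha$, and $\partial_{ss}\chi_\alpha\in L^2$ for each fixed $\alpha$... no, we need $\alpha$-uniformity. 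The resolution: integrate by parts in $s$ to move $\partial_s$ off $\chi_\alpha$ onto $q_\alpha\hat q_\alpha^t$, trading $\partial_{ss}\chi_\alpha$ for $\partial_s\chi_\alpha\in L^2(0,T;H^2)\hookrightarrow L^\infty$ (uniformly, since $\chi_\alpha\to\chi$ in $H^1(0,T;H^2)$) against $\partial_s(q_\alpha\hat q_\alpha^t)=\partial_s q_\alpha\,\hat q_\alpha^t - q_\alpha^2$; the $\partial_s q_\alpha$ term then needs $\hat q_\alpha^t$ small, and one closes via Gronwall in $t$.

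After these manipulations, all remaining terms ($\CC(\chi)\e(p_\alpha):\e(\hat p_\alpha^t)$, $\CC'(\chi)\e(u)q_\alpha:\e(\hat p_\alpha^t)$, $\CC''(\chi)\e(u):\e(u)q_\alpha\hat q_\alpha^t$, $\CC'(\chi)\e(u):\e(p_\alpha)\hat q_\alpha^t$, $f''(\chi)q_\alpha\hat q_\alpha^t$) are estimated by Hölder, Young, the $L^\infty$-bounds on $\CC,\CC',\CC'',f''$, the state regularity $u\in W^{1,\infty}(0,T;H^1)\hookrightarrow L^\infty(0,T;L^p)$ for all finite $p$ in 2D, and the 2D Gagliardo–Nirenberg inequality $\|w\|_{L^4}\leq C\|w\|_{H^1}^{1/2}\|w\|_{L^2}^{1/2}$, precisely as in the proof of Lemma above (Step 3) and in Proposition~\ref{prop:strongDiff}. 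The key point is that these are all bounded by $\delta\big(\|\e(p_\alpha)\|_{L^2(0,t;L^2)}^2+\|q_\alpha\|_{L^2(0,t;H^1)}^2\big)$ plus $C_\delta$ times $\|\hat p_\alpha^t\|^2$, $\|\hat q_\alpha^t\|^2$ quantities, which by the elementary bound $\|\hat q_\alpha^t(s)\|^2\leq (t-s)\int_s^t\|q_\alpha\|^2$ and Fubini feed back into $\int_0^t(\cdots)$ and are absorbed by choosing $\delta$ small and invoking Gronwall's lemma in $t$. Collecting the coercive quantities yields $\|p_\alpha\|_{L^\infty(0,T;L^2)}+\|p_\alpha\|_{L^2(0,T;H^1)}+\|q_\alpha\|_{L^2(0,T;H^1)}\leq C$ uniformly in $\alpha$, i.e. $\|(p_\alpha,q_\alpha)\|_{\aQ}\leq C$. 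The main obstacle is precisely controlling the $\xi''$-term uniformly in $\alpha$: it must be integrated by parts in time against the anti-derivatives so that only first-order time-derivatives of $\chi_\alpha$ (which converge in $L^2(0,T;H^2)\hookrightarrow L^2(0,T;L^\infty)$, hence are $\alpha$-uniformly bounded) appear, and one must verify the boundary terms at $s=0$ and $s=t$ in this integration by parts vanish or are controlled by the initial data $q^0$ (bounded via \eqref{initialQ}).
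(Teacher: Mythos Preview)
Your overall plan --- test the $\alpha$-regularized adjoint system with the backward anti-derivatives $\hat p_\alpha^t,\hat q_\alpha^t$ and close via Gronwall --- is correct and matches the paper's strategy. But your treatment of the $\xi$-terms contains a genuine gap.

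You handle the term $\xi'(-\partial_s\chi_\alpha)\partial_s q_\alpha\,\psi$ and the ``dangerous'' term $-\xi''(-\partial_s\chi_\alpha)(\partial_{ss}\chi_\alpha)q_\alpha\,\psi$ \emph{separately}. For the second you propose to integrate by parts in $s$ to trade $\partial_{ss}\chi_\alpha$ for $\partial_s\chi_\alpha$, producing $\int_0^t\!\int_\Omega\xi'(-\partial_s\chi_\alpha)\partial_s q_\alpha\,\hat q_\alpha^t$, which you then claim can be absorbed ``via Gronwall since $\hat q_\alpha^t$ is small''. This does not work: $\partial_s q_\alpha$ is \emph{not} bounded uniformly in $\alpha$ (the Step~3 estimate depends on $\|\xi''(-\partial_t\chi_\alpha)\partial_{tt}\chi_\alpha\|_{L^\infty}$, which blows up as $\alpha\downarrow 0$), and no smallness of $\hat q_\alpha^t$ can compensate an unbounded factor. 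The missing observation is the product-rule identity
\[
\xi'(-\partial_s\chi_\alpha)\,\partial_s q_\alpha-\xi''(-\partial_s\chi_\alpha)(\partial_{ss}\chi_\alpha)\,q_\alpha=\partial_s\big(\xi'(-\partial_s\chi_\alpha)\,q_\alpha\big),
\]
so the two $\xi$-terms together form a total derivative. The paper exploits this by \emph{first} passing to the very weak form (integrating \eqref{adjEq2} by parts in time against test-functions vanishing at $T$, using the initial elliptic equation \eqref{initialQ} to collect the boundary contribution at $s=0$), which leaves only $-\int\xi'(-\partial_t\chi_\alpha)q_\alpha\partial_t\psi$. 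Testing then with $\psi=\hat q_\alpha^t$ gives $+\int_0^t\!\int_\Omega\xi'(-\partial_s\chi_\alpha)|q_\alpha|^2\geq 0$, a purely favourable term. Had you carried out your integration by parts carefully you would have discovered that your $\partial_s q_\alpha$ term \emph{cancels exactly} against the one coming from $\xi'(-\partial_s\chi_\alpha)\partial_s q_\alpha\,\hat q_\alpha^t$; but as written, your argument does not recognise this and therefore fails.

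A secondary issue: your ``Fubini'' bound $\|\hat q_\alpha^t(s)\|^2\leq (t-s)\int_s^t\|q_\alpha\|^2$ gives only $\|\hat q_\alpha^t\|_{L^2(0,t;H^1)}^2\leq Ct\,\|q_\alpha\|_{L^2(0,t;H^1)}^2$, which is \emph{not} in Gronwall form (the right-hand side involves $\|q_\alpha\|_{L^2(0,t;H^1)}^2$, not $\int_0^t\|q_\alpha\|_{L^2(0,s;H^1)}^2\,ds$), so it cannot be absorbed for arbitrary $T$. The paper resolves this with a non-obvious calculation: writing $t\|\hat q_\alpha^t(0)\|_{H^1}^2=\int_0^t\frac{d}{ds}\big(s\|\hat q_\alpha^s(0)\|_{H^1}^2\big)\,ds$, expanding by the product rule, and invoking Lemma~\ref{lemma:parInt} for $\frac{d}{ds}\|\hat q_\alpha^s(0)\|_{H^1}^2$, which after Young's inequality yields precisely $\|\hat q_\alpha^t\|_{L^2(0,t;H^1)}^2\leq \delta\|q_\alpha\|_{L^2(0,t;H^1)}^2+C_\delta\int_0^t\|q_\alpha\|_{L^2(0,s;H^1)}^2\,ds$ --- the form needed for absorption plus Gronwall globally in $t$.
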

			\begin{proof}
			To this end, let $(p_\alpha,q_\alpha)\in \dQ$ be a solution of \eqref{adjEq1}-\eqref{adjEq2}
			for $\alpha>0$ as proven in step 2.
			Integrating \eqref{adjEq1}-\eqref{adjEq2} in time, applying integration by part and using the initial conditions yield
			\begin{align}
				&\int_0^T\int_{\Omega}-\partial_t p_{\alpha}\cdot\partial_t\ph+\big(\CC(\chi_{\alpha})\e(p_\alpha)+\CC'(\chi_{\alpha})\e(u_{\alpha})q_\alpha+\DD\e(\partial_t p_\alpha)\big):\e(\ph)\dxt=0,
					\label{eqn:adjContEq1}\\
				&\begin{aligned}
					&\int_0^T\int_{\Omega}-q_\alpha\partial_t\psi-\xi'(-\partial_t\chi_\alpha)q_\alpha\partial_t\psi
						-\nabla q_\alpha\cdot\nabla\partial_t\psi+\nabla q_\alpha\cdot\nabla\psi\dxt\\
						&\quad+\int_0^T\int_{\Omega}\frac 12 \CC''(\chi_{\alpha})\e(u_{\alpha}):\e(u_{\alpha})q_\alpha\psi+\CC'(\chi)\e(u):\e(p_\alpha)\psi+f''(\chi)q_\alpha\psi\dxt\\
						&\qquad=\int_\Omega\lambda_T\big(\chi_{\alpha}(T)-\chi_T\big)\psi(0)\dx
				\end{aligned}
				\label{eqn:adjContEq2}
			\end{align}
			for all $(\ph,\psi)\in\dQ$ with $\ph(T)=0$ and $\psi(T)=0$.
			
			Testing \eqref{eqn:adjContEq1} with $\hat p_\alpha^t$
			and \eqref{eqn:adjContEq2} with $\hat q_\alpha^t$
			and noticing $p_\alpha=-\partial_t\hat p_\alpha^t$ and $q_\alpha=-\partial_t\hat q_\alpha^t$
			as well as the initial and final-time conditions $p_\alpha(0)=0$ and $\hat p_\alpha^t(T)=0$,
			we obtain after integration by parts 
			\begin{align*}
				&\int_0^t\int_{\Omega}\partial_t p_\alpha\cdot p_\alpha+\CC(\chi_{\alpha})\e(p_\alpha):\e(\hat p_\alpha^t)+\CC'(\chi_{\alpha})\e(u_{\alpha})q_\alpha:\e(\hat p_\alpha^t)+\DD\e(p_\alpha):\e(p_\alpha)\dxs=0,\\
				&\int_0^t\int_{\Omega}|q_\alpha|^2+\xi'(-\partial_t\chi_\alpha)|q_\alpha|^2+|\nabla q_\alpha|^2+\nabla(-\partial_t\hat q_\alpha^t)\cdot\nabla\hat q_\alpha^t
					+\frac 12 \CC''(\chi_{\alpha})\e(u_{\alpha}):\e(u_{\alpha})q_\alpha\hat q_\alpha^t\dxs\\
				&\quad+\int_0^t\int_{\Omega}\CC'(\chi_{\alpha})\e(u_{\alpha}):\e(p_\alpha)\hat q_\alpha^t+f''(\chi_{\alpha})q_\alpha\hat q_\alpha^t\dxs
					=\int_\Omega\lambda_T\big(\chi_{\alpha}(T)-\chi_T\big)\hat q_\alpha^t(0)\dx.
			\end{align*}
			Adding these equations, using $\xi'\geq 0$ (see \textbf{(A3)}) and $q_\alpha^t(T)=0$ and applying further standard estimates yield
			\begin{align}
			\begin{aligned}
				&\|p_\alpha(t)\|_{L^2}^2+c\|\e(p_\alpha)\|_{L^2(0,t;L^2)}^2+\|q_\alpha\|_{L^2(0,t;H^1)}^2+\|\nabla\hat q_\alpha^t(0)\|_{L^2}^2\\
					&\qquad\leq
						\underbrace{-\int_0^t\int_{\Omega}\CC(\chi_{\alpha})\e(p_\alpha):\e(\hat p_\alpha^t)\dxs}_{=:T_1}
						\underbrace{-\int_0^t\int_{\Omega}\CC'(\chi_{\alpha})\e(u_{\alpha})q_\alpha:\e(\hat p_\alpha^t)\dxs}_{=:T_2}\\
					&\qquad\quad
						\underbrace{-\int_0^t\int_{\Omega}\frac 12 \CC''(\chi_{\alpha})\e(u_{\alpha}):\e(u_{\alpha})q_\alpha\hat q_\alpha^t\dxs}_{=:T_3}
						\underbrace{-\int_0^t\int_{\Omega}\CC'(\chi_{\alpha})\e(u_{\alpha}):\e(p_\alpha)\hat q_\alpha^t\dxs}_{=:T_4}\\
					&\qquad\quad
						\underbrace{-\int_0^t\int_{\Omega}f''(\chi_{\alpha})q_\alpha\hat q_\alpha^t\dxs}_{=:T_5}
						+\underbrace{\int_\Omega\lambda_T\big(\chi_{\alpha}(T)-\chi_T\big)\hat q_\alpha^t(0)\dx}_{=:T_6}.
			\end{aligned}
			\label{pqEst}
			\end{align}
			We obtain by standard calculations
			\begin{align*}
				T_1\leq{}&\|\CC(\chi_\alpha)\|_{L^\infty(L^\infty)}\|\e(p_\alpha)\|_{L^2(0,t;L^2)}\|\e(\hat p_\alpha^t)\|_{L^2(0,t;L^2)}\\
					\leq{}&\delta\|\e(p_\alpha)\|_{L^2(0,t;L^2)}+C_\delta\|\e(\hat p_\alpha^t)\|_{L^2(0,t;L^2)},\\
				T_2\leq{}&\|\CC'(\chi_\alpha)\|_{L^\infty(L^\infty)}\|\e(u_{\alpha})\|_{L^\infty(L^4)}\|q_\alpha\|_{L^2(0,t;L^4)}\|\e(\hat p_\alpha^t)\|_{L^2(0,t;L^2)}\\
					\leq{}&\delta\|q_\alpha\|_{L^2(0,t;H^1)}^2+C_\delta\|\e(\hat p_\alpha^t)\|_{L^2(0,t;L^2)}^2,\\
				T_3\leq{}&\frac12\|\CC''(\chi_\alpha)\|_{L^\infty(L^\infty)}\|\e(u_{\alpha})\|_{L^\infty(L^4)}^2\|q_\alpha\|_{L^2(0,t;L^4)}\|\hat q_\alpha^t\|_{L^2(0,t;L^4)}\\
					\leq{}&\delta\|q_\alpha\|_{L^2(0,t;H^1)}^2+C_\delta\|\hat q_\alpha^t\|_{L^2(0,t;H^1)}^2,\\
				T_4\leq{}&\|\CC'(\chi_\alpha)\|_{L^\infty(L^\infty)}\|\e(u_{\alpha})\|_{L^\infty(L^4)}\|\e(p_\alpha)\|_{L^2(0,t;L^2)}\|\hat q_\alpha^t\|_{L^2(0,t;L^4)}\\
					\leq{}&\delta\|\e(p_\alpha)\|_{L^2(0,t;L^2)}^2+C_\delta\|\hat q_\alpha^t\|_{L^2(0,t;H^1)}^2,\\
				T_5\leq{}&\|f''(\chi_\alpha)\|_{L^\infty(L^\infty)}\|q_\alpha\|_{L^2(0,t;L^2)}\|\hat q_\alpha^t\|_{L^2(0,t;L^2)}\\
					\leq{}&\delta\|q_\alpha\|_{L^2(0,t;L^2)}^2+C_\delta\|\hat q_\alpha^t\|_{L^2(0,t;L^2)}^2,\\
				T_6\leq{}&\lambda_T\|\chi_\alpha(T)-\chi_T\|_{L^2}\|\hat q_\alpha^t(0)\|_{L^2},\\
					\leq{}&\delta\|q_\alpha\|_{L^2(0,t;L^2)}^2+C_\delta\|\chi_\alpha(T)-\chi_T\|_{L^2}^2.
			\end{align*}
			We observe that it will be indispensable to absorb the terms
			$\|\hat q_\alpha^t\|_{L^2(0,t;H^1)}^2$ and
			$\|\e(\hat p_\alpha^t)\|_{L^2(0,t;L^2)}^2$
			by terms on the left-hand side in \eqref{pqEst}.
			To this end, we notice that by definition of $\hat q_\alpha^t$ we have
			\begin{align}
				\begin{aligned}
				\|\hat q_\alpha^t\|_{L^2(0,t;H^1)}^2
					={}&\|\hat q_\alpha^t(0)-\int_0^s q_\alpha(\tau)\mathrm d\tau\|_{L^2(0,t;H^1)}^2\\
					\leq{}&C\|\hat q_\alpha^t(0)\|_{L^2(0,t;H^1)}^2+C\int_0^t\|q_\alpha\|_{L^2(0,s;H^1)}^2.
				\end{aligned}
				\label{adjEst1}
			\end{align}
			The first term on the right-hand side of \eqref{adjEst1} is treated by a tricky calculations using Lemma \ref{lemma:parInt}:
			\begin{align*}
				&\|\hat q_\alpha^t(0)\|_{L^2(0,t;H^1)}^2\\
					&=\int_0^t\|\hat q_\alpha^t(0)\|_{H^1}^2\ds
						=t\|\hat q_\alpha^t(0)\|_{H^1}^2
						=\int_0^t \frac{\mathrm d}{\mathrm ds}\Big(s\|\hat q_\alpha^s(0)\|_{H^1}^2\Big)\ds\\
					&=\int_0^t \|\hat q_\alpha^s(0)\|_{H^1}^2\ds+\int_0^t s\frac{\mathrm d}{\mathrm ds}\Big(\|\hat q_\alpha^s(0)\|_{H^1}^2\Big)\ds\\
					&=\int_0^t \|\hat q_\alpha^s(0)\|_{H^1}^2\ds+\int_0^t 2s\int_\Omega\Big(\hat q_\alpha^s(0) q_\alpha(s)+\nabla\hat q_\alpha^s(0)\cdot\nabla q_\alpha(s)\dx\Big)\ds\\
					&\leq\int_0^t\|q_\alpha\|_{L^2(0,s;H^1)}^2\ds+2T\int_0^t\int_\Omega C_\delta|\hat q_\alpha^s(0)|^2+\delta|q_\alpha(s)|^2+C_\delta|\nabla\hat q_\alpha^s(0)|^2+\delta|\nabla q_\alpha(s)|^2\dx\ds\\
					&=\delta 2T\|q_\alpha\|_{L^2(0,t;H^1)}^2+(C_\delta 2T+1)\int_0^t\int_0^s\|q_\alpha\|_{H^1}^2\,\mathrm d\tau\ds.
			\end{align*}
			This yields with \eqref{adjEst1} the crucial estimate
			\begin{align*}
				\|\hat q_\alpha^t\|_{L^2(0,t;H^1)}^2
					\leq{}&\delta\|q_\alpha\|_{L^2(0,t;H^1)}^2+C_\delta\int_0^t\|q_\alpha\|_{L^2(0,s;H^1)}^2.
			\end{align*}
			Analogously,
			\begin{align*}
				&\|\e(\hat p^t)\|_{L^2(0,t;L^2)}^2\leq \delta\|\e(p_\alpha)\|_{L^2(0,t;L^2)}^2+C_\delta\int_0^t\|\e(p_\alpha)\|_{L^2(0,s;L^2)}^2.
			\end{align*}
			By using these estimates and the estimates for $T_1$, ..., $T_6$ we obtain from \eqref{pqEst}
			\begin{align}
			\begin{aligned}
				&\|p_\alpha(t)\|_{L^2}^2+\|\e(p_\alpha)\|_{L^2(0,t;L^2)}^2+\|q_\alpha\|_{L^2(0,t;H^1)}^2\\
					&\qquad\leq
						C\int_0^t\Big(\|\e(p_\alpha)\|_{L^2(0,s;L^2)}^2+
						\|q_\alpha\|_{L^2(0,s;H^1)}^2\Big)\ds
						+C\|\chi_\alpha(T)-\chi_T\|_{L^2}^2.
			\end{aligned}
			\end{align}
			Thus Lemma \ref{lemmma:adjPQ} is proven after using Gronwall's lemma.
			\ep
		\end{proof}
		\vspace*{0.0em}\\
		The assertion of Proposition \ref{prop:adjointSystem} can now easily be obtained by exploiting the a priori estimate of the
		$\alpha$-regularized adjoint system from Lemma \ref{lemmma:adjPQ}.
		Due to the linearity of the PDE system \eqref{adjEq1}-\eqref{adjEq2} we can pass
		to the limit $\alpha\downarrow 0$.
		Thus Proposition \ref{prop:adjointSystem} is proven.\ep
	
	\subsection{Derivation of a first-order optimality system}
		This last part of the section is devoted to collect the results from below
		in order to prove our main result namely
		a necessary optimality system for minimizers of \textbf{(CP)}.
		For reader's convenience we summarize the approach to solve this problem.

		From now on we assume that \textbf{(A1)}-\textbf{(A5)}, \textbf{(O1)}-\textbf{(O3)}
		and \textbf{(B1)}-\textbf{(B4)} hold.
		Let us introduce the so-called ''reduced cost functional''
		given by
		\begin{align*}
			&j:\C B\to\R
				&&\text{ defined by }j(b):=\C J(\C S_{2}(b),b)\\
			&\hspace*{-2.1em}\text{with the cost functional}\phantom{\Big(}\\
			&\C J:\dX\times\C B\to\R
				&&\text{ defined by \eqref{functional}}\\
			&\hspace*{-2.1em}\text{and the control-to-state operator}\phantom{\Big(}\\
			&\C S:\C B\to\dU\times\dX
				&&\text{ defined by }\C S(b)=(\C S_1(b),\C S_2(b)):=(u(b),\chi(b))\\
				&&&\text{ solving PDE system \eqref{eqn:damageRegEq}-\eqref{eqn:initialRegEq}.}
		\end{align*}
		Our optimal control problem \textbf{(CP)} can now be restated as
		\begin{align*}
			\textbf{(CP')}\quad\text{find a minimizer of $j$ over $\C B_{adm}$.}
		\end{align*}
		Theorem \ref{ExistenceOptimalControl} guarantees existence of minimizers to \textbf{(CP)}.
		Let $b$ such a minimizer.
		We know that $\C J$ is Fr\'echet differentiable
		and from Proposition \ref{prop:strongDiff} that $\C S$ is G\^{a}teaux differentiable.
		Thus $j$ is also G\^{a}teaux differentiable.
		Since $\C B_{adm}$ is a bounded, closed and convex subset of $\C B$ (see \textbf{(B4)}),
		the desired necessary condition for optimality is 
		\begin{align}
			\langle D j(b),\widehat b -b\rangle_{\C B}\geq 0\quad\text{ for every }\widehat b \in\C B_{adm}.
		\label{chainRule2}
		\end{align}
		Application of the chain rule yields (see \eqref{chainRule} with $h=\widehat b-b$)
		\begin{align}
			\big\langle \partial_\chi\C J(\C S_2(b),b),D\C S_2(b)[\widehat b -b]\big\rangle_{\dX}
				+\big\langle \partial_b\C J(\C S_2(b),b),\widehat b -b\big\rangle_{\C B}\geq 0.
		\label{chainRule3}
		\end{align}
		Let $(p,q)$ be a solution of the adjoint problem at $(u,\chi):=\C S(b)$ according to
		Proposition \ref{prop:adjointSystem}.
		Testing equation \eqref{adjointSum} with the admissible pair of
		test-functions
		$$
			(\ph,\psi)=(D\C S_1(b)[\widehat b -b], D\C S_2(b)[\widehat b -b])=:(\du,\dchi)\in\dQ,
		$$
		we can rewrite the first term in \eqref{chainRule3} as the
		left-hand side of \eqref{adjointSum} tested with $(\du,\dchi)$.
		Then testing the PDE system for $(\du,\dchi)$ in Proposition
		\ref{prop:weakDiff} with the admissible pair of test-functions $(p,q)\in\aQ$ and
		adding the resulting equations
		we end up with (see \eqref{pqId}-\eqref{pqId2})
		$$
			\big\langle \partial_\chi\C J(\C S_2(b),b),D\C S_2(b)[\widehat b -b]\big\rangle_{\dX}
			=\int_\Sigma p\cdot(\widehat b -b)\dxt.
		$$
		Therefore \eqref{chainRule3} is equivalent to
		\begin{align}
			\int_\Sigma (p+\lambda_\Sigma b)\cdot(\widehat b -b)\dxt
				\geq 0\quad\text{ for every }\widehat b \in\C B_{adm}.
		\label{VI}
		\end{align}
		In conclusion we have proven the following result:
		\begin{theorem}
		\label{theorem:main}
			Suppose that \textbf{(A1)}-\textbf{(A5)}, \textbf{(O1)}-\textbf{(O3)}
			and \textbf{(B1)}-\textbf{(B4)} hold.
			Let $b\in\C B_{adm}$ be an optimal control of \textbf{(CP)} with the associated state
			$(u,\chi)=\C S(b)$ and adjoint variables $(p,q)\in\aQ$ that solves the system \eqref{adjP}-\eqref{adjQ} according to Proposition \ref{prop:adjointSystem}. Then \eqref{VI} holds.
		\end{theorem}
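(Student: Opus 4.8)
The plan is to assemble the first-order condition from three facts already at our disposal: G\^{a}teaux differentiability of the control-to-state map (Proposition \ref{prop:strongDiff}), existence of an adjoint state (Proposition \ref{prop:adjointSystem}), and convexity of $\C B_{adm}$ (assumption \textbf{(B4)}). First I would write down the standard variational inequality characterizing a minimizer of a G\^{a}teaux-differentiable functional over a convex set, expand it via the chain rule, and then eliminate the linearized state variable by exploiting the duality built into the two weak formulations.

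\textbf{Step 1 (abstract variational inequality).} The cost functional $\C J$ from \eqref{functional} is a continuous quadratic functional on $\dX\times\C B$, hence Fr\'echet differentiable; combined with the G\^{a}teaux differentiability of $\C S:\C B\to\dQ$ from Proposition \ref{prop:strongDiff}, the reduced functional $j(b)=\C J(\C S_2(b),b)$ is G\^{a}teaux differentiable with derivative given by the chain rule \eqref{chainRule}. Let $b\in\C B_{adm}$ be a minimizer, which exists by Theorem \ref{ExistenceOptimalControl}. For $\widehat b\in\C B_{adm}$ and $t\in(0,1]$, convexity of $\C B_{adm}$ gives $b+t(\widehat b-b)\in\C B_{adm}$, so $j(b+t(\widehat b-b))\ge j(b)$; dividing by $t$ and letting $t\downarrow 0$ yields $\langle Dj(b),\widehat b-b\rangle_{\C B}\ge 0$, i.e. \eqref{chainRule2}, and therefore \eqref{chainRule3}. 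The control part of the derivative is immediate from the explicit form of $\C J$, namely $\langle\partial_b\C J(\C S_2(b),b),\widehat b-b\rangle_{\C B}=\lambda_\Sigma\int_\Sigma b\cdot(\widehat b-b)\dxt$, whereas the target term $\frac{\lambda_T}{2}\|\chi(T)-\chi_T\|_{L^2(\Omega)}^2$ contributes only to $\partial_\chi\C J$, with $\langle\partial_\chi\C J(\C S_2(b),b),\psi\rangle=\lambda_T\int_\Omega(\chi(T)-\chi_T)\psi(T)\dx$.

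\textbf{Step 2 (elimination of the linearized state).} It remains to rewrite the first term of \eqref{chainRule3} through the adjoint state. Put $(\ph,\psi):=(D\C S_1(b)[\widehat b-b],D\C S_2(b)[\widehat b-b])=:(\du,\dchi)$; by Proposition \ref{prop:weakDiff}(ii) this pair lies in $\dQ$, solves the linearized system \eqref{linUPDE}-\eqref{linChiPDE} in direction $h=\widehat b-b$, and satisfies $\du(0)=\du_t(0)=\dchi(0)=0$. It is thus an admissible test pair in the adjoint identity \eqref{adjointSum} (equivalently \eqref{adjP}-\eqref{adjQ}), which by Proposition \ref{prop:adjointSystem} holds for all $(\ph,\psi)\in\dQ$ with vanishing initial data. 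Inserting $(\ph,\psi)=(\du,\dchi)$ into \eqref{adjointSum} makes its right-hand side equal to $\langle\partial_\chi\C J(\C S_2(b),b),\dchi\rangle$. On the other hand, the left-hand side of \eqref{adjointSum} evaluated at $(\du,\dchi)$ coincides term by term with the left-hand side of \eqref{linPQ}, i.e. with \eqref{linUPDE} tested by $p$ plus \eqref{linChiPDE} tested by $q$ and $(p,q)\in\aQ$ being an admissible test pair by Proposition \ref{prop:weakDiff}(ii); its right-hand side equals $\langle(p,q),\C C(\widehat b-b)\rangle=\int_\Sigma p\cdot(\widehat b-b)\dxt$ by \eqref{pqId2}. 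Chaining the two evaluations gives $\langle\partial_\chi\C J(\C S_2(b),b),D\C S_2(b)[\widehat b-b]\rangle_{\dX}=\int_\Sigma p\cdot(\widehat b-b)\dxt$; substituting this together with the expression for $\partial_b\C J$ into \eqref{chainRule3} turns it into $\int_\Sigma(p+\lambda_\Sigma b)\cdot(\widehat b-b)\dxt\ge 0$ for all $\widehat b\in\C B_{adm}$, which is precisely \eqref{VI}.

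\textbf{Main obstacle.} The analytical difficulty has been front-loaded into Propositions \ref{prop:weakDiff}--\ref{prop:adjointSystem}; the present argument is essentially bookkeeping. The one point needing genuine care is the compatibility of the two weak formulations used in Step 2: the linearized problem carries test functions in the large space $\aQ$ while its solutions live in $\dQ$, and the adjoint problem carries test functions in $\dQ$ while its solutions live in $\aQ$. Because the two formulations are mutually dual, $(\du,\dchi)\in\dQ$ is legitimately admissible in \eqref{adjointSum} and $(p,q)\in\aQ$ is legitimately admissible in \eqref{linUPDE}-\eqref{linChiPDE}, and the two resulting pairings yield the same real number; verifying that every integral occurring in this common expression is finite --- so that no further density or regularization argument is required in the final passage --- is the only thing to confirm, and it follows from the regularity encoded in the definitions of $\dQ$ and $\aQ$ together with the $2$D embeddings $H^2(\Omega)\subset L^\infty(\Omega)$ and $H^1(\Omega)\subset L^p(\Omega)$ for all $p<\infty$.
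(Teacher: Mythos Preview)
Your proof is correct and follows essentially the same route as the paper: derive the abstract variational inequality \eqref{chainRule2} from G\^{a}teaux differentiability of the reduced functional and convexity of $\C B_{adm}$, expand via the chain rule to obtain \eqref{chainRule3}, and then eliminate the linearized state by testing \eqref{adjointSum} with $(\du,\dchi)\in\dQ$ and the linearized system \eqref{linUPDE}--\eqref{linChiPDE} with $(p,q)\in\aQ$ so that the common bilinear expression yields \eqref{pqId}--\eqref{pqId2}. Your explicit discussion of the dual compatibility of the test-function spaces is a welcome clarification of a point the paper leaves implicit.
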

		
	\section{Conclusion and perspectives}
		In our preceding work \cite{FH15} we have proven well-posedness of strong solutions
		of the state system \eqref{eqn:elasticRegEq}-\eqref{eqn:initialRegEq}
		and existence of optimal controls for \textbf{(CP)}.
		Based on these results we have established first-order optimality conditions
		in this paper.
		The main result is stated in Theorem \ref{theorem:main} and provides a basis
		for further investigations.
		We conclude our paper with some open problems that could be addressed in future
		works.
		\begin{itemize}
			\item
				\textit{Irreversibility condition.}
					As pointed out in the introduction
					damage models usually contains a so-called irreversibility condition
					which is realized via the sub-differential term $\partial I_{(-\infty,0]}(\chi_t)$
					in the damage law. Equivalently we may introduce a slack variable $\zeta$
					and write the damage law as
					\begin{align*}
						\chi_t+\zeta-\Delta\chi_t-\Delta\chi+\frac 12 \mathbb C'(\chi)\e(u):\e(u)+f'(\chi)=0
					\end{align*}
					together with the complementarity conditions
					\begin{align*}
						\chi_t\cdot\zeta=0,\quad\zeta\geq 0,\quad\chi_t\leq 0.
					\end{align*}
					The corresponding optimal control problem then becomes a difficult and unexplored
					mathematical program with complementarity constraints (MPCC)
					and it remains open if stationarity conditions can be obtained
					via a limit passage of the regularized version as considered in this paper.
					As pointed out in \cite{Bar81} (see \cite{FH15} for our case)
					optima of the regularized control problem approximate solutions of the
					MPCC.
			\item
				\textit{Different cost functionals.}
					We have considered an $L^2$-tracking type cost functional
					in \textbf{(CP)} since this work is focused on the treatment of a complex and
					nonlinear state system.
					This restricts possible applications because (smooth approximations of) cracks only
					give rise to a small contribution with respect to the $L^2$-norm.
					More realistic choices would be the usage of higher-order or even $L^\infty$-cost functionals.
			\item
				\textit{Damage-dependent viscosities.}
					It would be desirable to let not only the stiffness tensor $\mathbb C(\cdot)$
					but also the viscosity tensor $\mathbb D$
					in the force balance equation \eqref{eqn:elasticRegEq}
					to depend on the damage phase-field $\chi$.
					Existence of strong solutions has already been proven in \cite{FH15}
					and well-posedness is also expected for this case.
					However optimality conditions for the optimal control problem still needs to be
					shown.
		\end{itemize}

	\addcontentsline{toc}{chapter}{Bibliography}{\footnotesize{\setlength{\baselineskip}{0.2 \baselineskip}
	\bibliography{references}
	}
	\bibliographystyle{plain}}
\end{document}